\newtheorem{theorem}{Theorem}[section]
\newtheorem{lemma}[theorem]{Lemma}
\newtheorem{proposition}[theorem]{Proposition}
\newtheorem{corollary}[theorem]{Corollary}
\theoremstyle{definition}
\newtheorem{definition}[theorem]{Definition}
\newtheorem{remark}[theorem]{Remark}
\numberwithin{equation}{section}
\newcommand{\R}{\mathbb{R}}
\newcommand{\C}{\mathbb{C}}
\newcommand{\BF}{\mathbf{F}}
\newcommand{\BP}{\mathbf{P}}
\newcommand{\CA}{\mathcal{A}}
\newcommand{\CF}{\mathcal{F}}
\newcommand{\CM}{\mathcal{M}}
\newcommand{\CO}{\mathcal{O}}
\newcommand{\fk}{\mathfrak{k}}
\newcommand{\fg}{\mathfrak{g}}
\newcommand{\ft}{\mathfrak{t}}
\newcommand{\ev}[1]{\langle#1\rangle}
\newcommand{\coker}{\mathrm{coker}\! \ }
\newcommand{\vol}{\mathrm{vol}}
\newcommand{\Crit}{\mathrm{Crit}\ \!}
\newcommand{\jkres}{\mathrm{jkres}}
\newcommand{\Supp}{\mathrm{Supp}\! \  }
\begin{document}
%\title[Localization for $K$-Contact Manifolds]{Localization for $K$-Contact Manifolds}
%\author[L. Casselmann and J. M. Fisher]{Lana Casselmann and Jonathan M. Fisher}
%
\title{Localization for $K$-Contact Manifolds}

%\date{\today}

\author[L. Casselmann]{Lana Casselmann}
%\address{Fachbereich Mathematik, Universität Hamburg, Germany}
\email{lana.casselmann@uni-hamburg.de}
\author[J. M. Fisher]{Jonathan M. Fisher}
\email{jonathan.m.fisher@gmail.com}
\address{Fachbereich Mathematik, Universität Hamburg, Germany}

\begin{abstract}
We prove an analogue of the Atiyah-Bott-Berline-Vergne localization formula in the setting of equivariant basic cohomology of $K$-contact manifolds. 
As a consequence, we deduce analogues of Witten's nonabelian localization and the Jeffrey-Kirwan residue formula, which relate equivariant basic integrals on a contact manifold $M$ to basic integrals on the contact quotient $M_0 := \mu^{-1}(0)/G$, 
where $\mu$ denotes the contact moment map for the action of a torus $G$. In the special case that $M \to N$ is an equivariant Boothby-Wang fibration, our formulae reduce to the usual ones for the symplectic manifold $N$.
\end{abstract}

\maketitle

\tableofcontents

\section{Introduction}

Let $(M, \alpha)$ be a compact connected contact manifold of dimension $2n+1$. Then $M$ has a natural foliation $\CF$ whose leaves are the orbits of the Reeb vector field $R$. 
If $R$ integrates to a free $S^1$-action, then the space of leaves $M/\CF$ is naturally a symplectic manifold of dimension $2n$ and via the pullback of the projection, we can identify differential forms on $M/\CF$ with \emph{basic} differential forms $\Omega(M, \CF) \subset \Omega(M)$. Usually, however, $R$ does not integrate to an $S^1$-action and the space of leaves fails to be a manifold. 
Nevertheless, we can always consider the subcomplex $\Omega(M, \CF) \subset \Omega(M)$ of basic differential forms. 
The basic cohomology of $M$ is the cohomology of this complex, and it behaves very much like the cohomology of a compact $2n$-dimensional symplectic manifold, at least under the $K$-contact assumption. 
Suppose now that in addition a torus $G$ acts on $M$, preserving the contact form. Then using the Cartan model of equivariant cohomology, we obtain a subcomplex
$\Omega_G(M, \CF) \subseteq \Omega_G(M)$ of \emph{Reeb basic equivariant differential forms}. The corresponding cohomology ring $H_G(M, \CF)$ is a module over $H_G := H_G(\mathrm{point})$. 
In what follows, we denote by $\mu: M \to \fg^\ast$ the contact moment map defined by $\ev{\mu, \xi} := \alpha(\xi_M)$, by $\{\phi_t\}$ the flow of $R$ and by $T$ its closure. Our first result is an analogue of the Atiyah-Bott-Berline-Vergne localization formula \cite{AtiyahBottMomentMap,BerlineVergne}.

\begin{theorem} \label{thm-localization} Suppose a torus $G$ acts on a $K$-contact manifold $(M, \alpha)$ such that $G$ preserves $\alpha$, and suppose in addition that the $G$-fixed points have closed Reeb orbits. 
Then we have for all $\eta \in H_G(M, \CF)$ the identity
\begin{equation} \int_M \alpha \wedge \eta = \sum_{C_j \subseteq C} \int_{C_j} \frac{i_j^\ast (\alpha \wedge \eta)}{e_G(\nu C_j,\CF)}, \end{equation}
where $C = \Crit\mu$, $i_j: C_j \hookrightarrow M$ denotes the inclusion of the connected components $C_j \subseteq C$, and $e_G(\nu C_j,\CF)$ denotes the equivariant basic Euler class of the normal bundle to $C_j$.
\end{theorem}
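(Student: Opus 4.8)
The plan is to transplant the standard proof of the Atiyah--Bott--Berline--Vergne formula into the basic equivariant Cartan complex $\Omega_G(M,\CF)$, the two ingredients being exactness of $\eta$ away from $C$ and a local Thom/Gysin computation near each $C_j$. The pairing $\eta\mapsto\int_M\alpha\wedge\eta$ is the correct notion of integration here: although $\alpha$ is not basic, its equivariant differential $(d_G\alpha)(\xi)=d\alpha-\ev{\mu,\xi}$ is basic (both $d\alpha$ and, using $[R,\xi_M]=0$, the function $\ev{\mu,\xi}$ are $\CF$-basic), so $\alpha$ plays the role of a transverse primitive. Concretely, for basic $\beta$ one has $\alpha\wedge d_G\beta=(d_G\alpha)\wedge\beta-d_G(\alpha\wedge\beta)$; the first term is purely basic, hence of ordinary degree $\le 2n$, and integrates to $0$ over the $(2n+1)$-manifold $M$, while $\int_M d_G(\alpha\wedge\beta)=0$ (the $d$-part by Stokes, the contraction part for degree reasons). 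Thus $\int_M\alpha\wedge(-)$ descends to a map $H_G(M,\CF)\to H_G$, and I may work with cohomology classes and localize at the multiplicative set $H_G\setminus\{0\}$.

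Next I would identify $C=\Crit\mu$ with the transverse fixed locus of $G$. From $d\ev{\mu,\xi}=-\iota_{\xi_M}d\alpha$ and $\ker d\alpha=\R R$ it follows that $p\in C$ exactly when $\xi_{M,p}\in\R R_p$ for every $\xi\in\fg$, i.e. when every fundamental vector field is tangent to $\CF$ at $p$. Hence on $U:=M\setminus C$ there is, at each point, some $\xi$ whose transverse component is nonzero. Using a $G$-invariant bundle-like metric adapted to the contact structure, I would build a $G$-invariant basic $1$-form $\theta\in\Omega^1_G(U,\CF)$ (patching the metric duals of such transverse components by a partition of unity) for which the degree-zero part $\xi\mapsto-\theta(\xi_M)$ of $d_G\theta$ is nonvanishing, so that $d_G\theta$ becomes invertible in the localized complex. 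Since $d_G\eta=0$ and $d_G((d_G\theta)^{-1})=0$, this yields the key exactness
\begin{equation*} \eta|_U=d_G\!\bigl(\theta\,(d_G\theta)^{-1}\,\eta\bigr), \end{equation*}
after clearing the denominator in $H_G$ so that the identity holds between honest basic forms on $U$.

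To assemble the formula I would excise $G$- and $\CF$-invariant tubular neighborhoods $V_j\downarrow C_j$, so that $M\setminus\bigcup_j V_j\subseteq U$, and set $\gamma:=\theta(d_G\theta)^{-1}\eta$. Because $\eta$ is a smooth (pole-free) form, $\int_{V_j}\alpha\wedge\eta\to 0$ as the neighborhoods shrink, whence $\int_M\alpha\wedge\eta=\lim\int_{M\setminus\bigcup_j V_j}\alpha\wedge\eta$. Expanding $\alpha\wedge d_G\gamma=(d_G\alpha)\wedge\gamma-d_G(\alpha\wedge\gamma)$ exactly as above, the basic term drops out and Stokes converts the remainder into boundary integrals, giving $\int_{M\setminus\bigcup_j V_j}\alpha\wedge\eta=\sum_j\int_{\partial V_j}\alpha\wedge\gamma$. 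It remains to evaluate $\lim_{V_j\downarrow C_j}\int_{\partial V_j}\alpha\wedge\theta(d_G\theta)^{-1}\eta$ in the local model of the normal bundle; a basic equivariant Thom-isomorphism computation identifies this limit with $\int_{C_j} i_j^\ast(\alpha\wedge\eta)/e_G(\nu C_j,\CF)$, and summing over $j$ produces the asserted identity.

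The principal obstacle is the local computation at the $C_j$, together with the need to keep every construction inside the basic complex. Two points require the standing hypotheses. First, $\theta$ and $(d_G\theta)^{-1}$ must be genuinely $\CF$-basic, which is what forces the metric to be bundle-like and uses the $K$-contact assumption (so that $R$ is Killing and $\CF$ is a transversely symplectic Riemannian foliation). Second, the basic equivariant Euler class $e_G(\nu C_j,\CF)$ must be defined and invertible after localization: the hypothesis that the $G$-fixed points have closed Reeb orbits guarantees that near $C_j$ the foliation is a genuine torus foliation, so $\nu C_j$ splits equivariantly into basic weight subbundles and $e_G(\nu C_j,\CF)$ is the product of the corresponding equivariant weights; its invertibility is precisely the statement that no such weight is purely Reeb, equivalently $C_j\subseteq\Crit\mu$. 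Verifying that the Thom form can be chosen basic and that the boundary integral limits to the Euler-class denominator is the technical core of the argument.
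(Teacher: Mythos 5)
Your proposal takes a genuinely different route from the paper: you attempt a Berline--Vergne-style argument (localizing one-form plus excision and Stokes), whereas the paper argues module-theoretically \`a la Atiyah--Bott, constructing the basic equivariant pushforward via Mathai--Quillen Thom forms, proving that $H_G(M\setminus C,\CF)$ is a torsion $S(\fg^\ast)$-module by covering with orbit neighborhoods and Mayer--Vietoris, deducing that $i^\ast i_\ast$ is multiplication by $e_G(\nu C,\CF)$ and hence invertible after localization, and then integrating the identity $\eta = i_\ast Q\eta$ (valid modulo torsion, which $\int_M\alpha\wedge(\,\cdot\,)$ annihilates) using the compactly supported Thom representatives. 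The difference of route would be fine, and your preliminary steps (well-definedness of $\int_M\alpha\wedge(\,\cdot\,)$ on cohomology, the identification of $\Crit\mu$ with the points where all fundamental vector fields are tangent to $\CF$) are correct. However, your central step --- the exactness of $\eta$ on $U=M\setminus C$ --- contains a genuine gap: you ask for a \emph{single} $G$-invariant basic one-form $\theta$ on $U$ such that $d_G\theta$ becomes invertible after localizing at $H_G\setminus\{0\}$, and you claim this follows from pointwise nonvanishing of the degree-zero part. It does not. The degree-zero part of $d_G\theta$ is the family of linear forms $\ell_x(\xi):=-\theta_x(\xi_M(x))$, an element of $(C^\infty(U)\otimes\fg^\ast)^G$ rather than of $S(\fg^\ast)$, and inverting such an element over $\mathrm{Frac}\,S(\fg^\ast)$ is a much stronger condition than pointwise nonvanishing. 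Concretely, invertibility of $d_G\theta$ in $\Omega_G(U,\CF)\otimes_{S(\fg^\ast)}\mathrm{Frac}\,S(\fg^\ast)$ means there exist an equivariant basic form $\omega$ on $U$ and a nonzero polynomial $f\in S(\fg^\ast)$ with $d_G\theta\cdot\omega=f$; since $d\theta$ raises form degree, comparing form-degree-zero components gives $\ell_x(\xi)\,\omega_0(x,\xi)=f(\xi)$ for every $x\in U$, so the linear form $\ell_x$ divides the \emph{fixed} polynomial $f$ for every $x$. Then the direction of $\ell_x$ ranges over the finitely many linear-factor directions of $f$, hence by continuity is constant on the connected set $U$; its kernel is a fixed hyperplane $\fh\subsetneq\fg$. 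But since $\theta$ is basic ($\iota_R\theta=0$), $\ell_x$ vanishes on the generalized stabilizer $\widetilde\fg_x$, so every $\widetilde\fg_x$, $x\in U$, would have to lie inside $\fh$. This fails in general as soon as $\dim G\geq 2$: for instance, for $T^2$ acting on the standard $S^5\subset\C^3$ with weights $(1,0),(0,1),(1,1)$, the generalized stabilizers of noncritical points include three pairwise distinct lines of $\R^2$, which lie in no common hyperplane. So no such $\theta$ exists, and ``clearing the denominator in $H_G$'' cannot be carried out; your claimed exactness $\eta|_U=d_G(\theta(d_G\theta)^{-1}\eta)$ is not established.

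The standard repair keeps your skeleton but runs it at one \emph{fixed} generic parameter: choose $\xi\in\fg$ outside the finitely many proper subalgebras $\widetilde\fg_x$ (finiteness is in the paper), and take $\theta:=g(\xi_M-\alpha(\xi_M)R,\,\cdot\,)$ for a $G\times T$-invariant contact metric $g$; this $\theta$ is basic, and $\theta(\xi_M)=|\xi_M-\alpha(\xi_M)R|^2>0$ on $U$, so the \emph{evaluated} form $(d_G\theta)(\xi)=d\theta-\theta(\xi_M)$ is invertible as an inhomogeneous differential form on $U$ (its inverse is a finite geometric series, no denominators from $S(\fg^\ast)$ needed). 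Your excision/Stokes computation then proves the identity for the evaluations at this $\xi$; since both sides of the asserted formula are rational functions of $\xi$ agreeing on a dense open subset of $\fg$, the identity in $H_G$ follows. Note also that your final step --- evaluating $\lim\int_{\partial V_j}\alpha\wedge\theta(d_G\theta)^{-1}\eta$ and identifying it with $\int_{C_j}i_j^\ast(\alpha\wedge\eta)/e_G(\nu C_j,\CF)$ --- is only announced, and it is exactly the computation the paper's module-theoretic route avoids: there, the Thom-form representative of $i_\ast Q\eta$ is compactly supported in the tubular neighborhoods, so fiberwise integration gives the answer exactly, with no limiting or boundary analysis. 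That is the main thing the paper's approach buys at the cost of the torsion/support machinery; your approach, once repaired as above, is more explicit but must still carry out this boundary computation within the basic complex.
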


\begin{remark} 
We note that for this result, it is sufficient to assume that all $G$-fixed points have a closed Reeb orbit, an assumption that is weaker than assuming 0 to be a regular value of $\Psi$ and that is automatically satisfied for total spaces in the Boothby-Wang fibration.

This theorem is closely related to results obtained in \cite{ToebenLoc, GNTlocalization}.
\end{remark}

Our second main theorem is an application of Theorem \ref{thm-localization} in the case that 0 is a regular value of the contact moment map $\mu$ to obtain an integration formula relating integration of equivariant basic forms on $M$ to integration of basic forms on $M_0 := \mu^{-1}(0) / G$, 
generalizing the results of Witten \cite{WittenNL} and Jeffrey-Kirwan \cite{JeffreyKirwan95} in the symplectic case. For any $\eta \in H_G(M, \CF)$, with $s= \dim G$, define a function $I^\eta(\epsilon)$ depending on a real parameter $\epsilon > 0$ by
\begin{equation} \label{eqn-ieta-defn}
  I^\eta(\epsilon) = \frac{1}{(2\pi i)^s \vol(G)} \int_{M \times \fg} \alpha \wedge \eta(\phi)  \wedge e^{id_G\alpha(\phi)-\epsilon |\phi|^2/2} d\phi.
\end{equation}
We denote by $\eta_0$ the image of $\eta$ under the natural basic Kirwan map (cf.~Theorem \ref{thm basic surj}) $H_G(M, \CF) \to H(M_0, \CF_0)$, and let $\alpha_0$ denote the quotient contact form on $M_0$.
\allowdisplaybreaks
\begin{theorem} \label{thm-integration} For any $\eta \in H_G(M, \CF)$, there exists some constant $c>0$ such that as $\epsilon \to 0^+$, $I^\eta(\epsilon)$ obeys the asymptotic
  \begin{equation} I^\eta(\epsilon) = \tfrac{1}{n_0}\int_{M_0} \alpha_0 \wedge \eta_0 \wedge  e^{\epsilon \Theta  + id\alpha_0} + o(\epsilon^{-s/2}e^{-c/ \epsilon}), \end{equation}
  where $\Theta \in  \!  H^4(M_0, \CF_0)$ is the class corresponding to $-\frac{<\phi,\phi>}{2}\in \! H_G^4(\mu^{-1} (0),\CF)$ $\simeq$ \linebreak $ H^4(M_0, \CF_0)$, with $n_0$ denoting the order of the kernel of the action of $G$ on $\mu^{-1}(0)$, that is, its regular isotropy.
\end{theorem}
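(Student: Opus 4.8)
The plan is to analyze the oscillatory integral $I^\eta(\epsilon)$ directly in the spirit of Witten, exploiting the same localization principle that underlies Theorem~\ref{thm-localization}. First I would separate the equivariant parameter from the geometry by writing $e^{id_G\alpha(\phi)} = e^{id\alpha}\,e^{-i\ev{\mu,\phi}}$ and carrying out the Gaussian integral over $\phi \in \fg$ before integrating over $M$. Completing the square in $\phi$ produces the factor $(2\pi/\epsilon)^{s/2}\,e^{-|\mu|^2/2\epsilon}$ together with polynomial (Hermite-type) corrections arising from the polynomial dependence of $\eta(\phi)$ on $\phi$. The weight $e^{-|\mu|^2/2\epsilon}$ concentrates the remaining integral over $M$ near the zero level set $Z := \mu^{-1}(0)$; on the complement of any tubular neighborhood $U$ of $Z$ one has $|\mu|^2 \geq \delta > 0$ (this is where the higher critical sets of $|\mu|^2$ live), so that region contributes $O(\epsilon^{-N}e^{-\delta/2\epsilon})$, which is absorbed into the error term $o(\epsilon^{-s/2}e^{-c/\epsilon})$ for any $c<\delta/2$.

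The heart of the argument is an equivariant contact normal form near $Z$. Using the regular value hypothesis, $Z$ is a smooth $G$-invariant submanifold on which $G$ acts locally freely, so $Z \to M_0$ is a principal $G$-orbibundle; choosing a connection $\theta$ with curvature $F$ (and using that $G$ is a torus, so $[\theta,\theta]=0$ and $F = d\theta$) I would identify $U$ with a neighborhood of the zero section in $Z\times_G \fg^\ast$ so that, in the momentum coordinate $\mu \in \fg^\ast$,
\begin{equation*} \alpha = \alpha_0 + \ev{\mu,\theta}, \qquad d\alpha = d\alpha_0 + \ev{d\mu\wedge\theta} + \ev{\mu, F}, \end{equation*}
and hence $d_G\alpha(\phi) = d\alpha_0 + \ev{d\mu\wedge\theta} + \ev{\mu, F-\phi}$. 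This construction must be carried out $\CF$-basically, compatibly with the Reeb foliation and the torus $T$, which is exactly where the $K$-contact assumption enters; establishing this normal form, together with the bookkeeping for the orbifold (locally free) structure, is the main obstacle and is where I expect the real work to lie.

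With the normal form in hand the computation is exact. The factor $e^{i\ev{d\mu\wedge\theta}}$ supplies the top vertical and momentum forms $i^s\,d\mu_1\wedge\theta_1\wedge\cdots\wedge d\mu_s\wedge\theta_s$; the vertical integral over the generic orbit $G/\Gamma$ returns $\vol(G)/n_0$, where $n_0 = |\Gamma|$ is the regular isotropy; and the Gaussian integral over $\mu\in\fg^\ast$ of $e^{i\ev{\mu,F-\phi}-|\mu|^2/2\epsilon}$, completed as $-\tfrac1{2\epsilon}|\mu - i\epsilon(F-\phi)^\sharp|^2 - \tfrac{\epsilon}{2}|F-\phi|^2$, produces $(2\pi\epsilon)^{s/2}e^{-\epsilon|F-\phi|^2/2}$. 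Integrating this against the remaining $\phi$-Gaussian effects the substitution $\phi \mapsto F$, so that $\eta(\phi)$ becomes its image $\eta_0$ under the basic Kirwan map (the Chern--Weil substitution $\phi\mapsto F$) and the quadratic term contributes $e^{-\epsilon|F|^2/2} = e^{\epsilon\Theta}$, with $\Theta$ the class of $-\tfrac12\ev{\phi,\phi}$ as claimed. The constants then collapse: $(2\pi/\epsilon)^{s/2}(2\pi\epsilon)^{s/2} = (2\pi)^s$, the ratio $(2\pi)^s/(2\pi i)^s = i^{-s}$ cancels the $i^s$ from the vertical form, and $\vol(G)^{-1}\cdot\vol(G)/n_0 = 1/n_0$, yielding precisely $\tfrac1{n_0}\int_{M_0}\alpha_0\wedge\eta_0\wedge e^{\epsilon\Theta+id\alpha_0}$. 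Since $F$ is nilpotent, $e^{\epsilon\Theta}$ terminates and this contribution is exact; the only remaining errors come from extending the momentum integral from the ball to all of $\fg^\ast$ and from the neighborhood localization, both of size $e^{-c/\epsilon}$ up to polynomial factors, giving the stated remainder $o(\epsilon^{-s/2}e^{-c/\epsilon})$.
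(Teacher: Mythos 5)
Your route is genuinely different from the paper's: you run Witten-style Gaussian concentration directly on $M$ (integrate out $\phi$, localize near $\mu^{-1}(0)$, compute in the normal form), whereas the paper follows Jeffrey--Kirwan by passing to the Fourier transform $Q^\eta(y)=\BF\left[\Pi_\ast(\eta\wedge e^{id_G\alpha})\right](y)$, proving it is compactly supported and piecewise polynomial (this is where Theorem \ref{thm-localization} is used), identifying it near $y=0$ with an explicit polynomial via the normal form (Proposition \ref{prop-q0}), and estimating the Gaussian tail (Lemma \ref{lemma-polynomial-asymptotics}). Both routes hinge on the same geometric input, the normal form of Proposition \ref{prop-normal-form}, which you correctly flag as the substantial work; your route never invokes the ABBV formula, while the paper's route builds exactly the structure that is reused for Theorem \ref{thm-residue}. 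Your off-neighborhood estimate, the extension of the momentum integral from the ball to $\fg^\ast$, the constants, and the role of $n_0$ are all right. Two small slips: the local model is $\mu^{-1}(0)\times\fg^\ast$, not the associated bundle $Z\times_G\fg^\ast$ (wrong dimension; your formulas are in fact written on the former), and the integral ``over $\mu\in\fg^\ast$ of $e^{i\ev{\mu,F-\phi}-|\mu|^2/2\epsilon}$'' does not parse as stated, since the Gaussian factor $e^{-|\mu|^2/2\epsilon}$ only appears \emph{after} the $\phi$-integration, at which point no $\phi$ remains; the computation must be organized as: $\phi$-integral first, then the $z$-integral handled by parts.

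The genuine gap is the step ``$\eta(\phi)$ becomes its image $\eta_0$ under the basic Kirwan map.'' That substitution is exact only for representatives whose coefficients in the model $\mu^{-1}(0)\times B_h$ are pulled back from the quotient and carry no $dz$- or $\theta$-components. A general closed representative $\eta$ restricted to the model neighborhood has arbitrary smooth dependence on the normal coordinate $z$ and arbitrary form components; when you integrate by parts in $z$ to implement $\phi\mapsto F$, the derivatives also hit these coefficients, producing Laplace-type corrections of relative size $\epsilon^{1/2},\epsilon,\dots$ --- polynomially, not exponentially, small. Since the entire content of the theorem is that the error is $o(\epsilon^{-s/2}e^{-c/\epsilon})$, i.e.\ that \emph{all} polynomial corrections beyond $e^{\epsilon\Theta}$ vanish, this must be ruled out, and nothing in your argument does so. The paper's fix (inside Proposition \ref{prop-q0}) is cohomological: using $H_G(\mu^{-1}(0)\times B_h,\CF\times\{\mathrm{pt}\})\cong H_G(\mu^{-1}(0),\CF)$, write $\eta|_U=\pi^\ast q^\ast\eta_0+d_G\gamma$, and show by Stokes' theorem that the $d_G\gamma$-term contributes only a boundary integral over $\mu^{-1}(0)\times S_h$; there $|\mu|=h$, so after the $\phi$-integration this term carries the weight $e^{-h^2/2\epsilon}$ and is absorbed in the error. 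Adding this replacement-plus-Stokes step (together with the orientation bookkeeping the paper carries out, which is precisely where the sign discrepancy with \cite{JeffreyKirwan95} is resolved) would make your argument complete.
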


A particular consequence of this theorem is the identity
\[\int_{M_0} \alpha_0 \wedge \eta_0 \wedge  e^{id\alpha_0} = n_0 \lim_{\epsilon \to 0^+} I^\eta(\epsilon),\]
which expresses intersection pairings on $M_0$ as limits of equivariant intersection pairings on $M$. 

The main ingredients in the proof of the Theorem \ref{thm-integration} are the result that the distribution $\BF(\int_M \alpha \wedge \eta \wedge e^{id_G\alpha})$, where $\BF$ denotes Fourier transformation, is piecewise polynomial and smooth near 0, and a particular expression for the polynomial this distribution coincides with near 0. With these properties and a result of Jeffrey-Kirwan, we then obtain the last of our main theorems.

\begin{theorem} \label{thm-residue} Let $\eta_0$ denote the image of $\eta \in H_G(M, \CF)$ under the Kirwan map. Then we have
\small
  \begin{equation} 
    \int\limits_{M_0} \alpha_0 \wedge \eta_0  \wedge e^{id\alpha_0} = \frac{n_0}{\vol(G)}
      \jkres\left( \sum_{C_j \subseteq C} e^{-i\ev{\mu(C_j), \phi}} \int\limits_{C_j} \frac{i_j^\ast \left(\alpha \wedge \eta(\phi)  \wedge e^{id\alpha} \right)}{e_G(\nu C_j,\CF)} [d\phi] \right). 
    \end{equation}
\normalsize
\end{theorem}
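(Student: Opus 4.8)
The plan is to obtain Theorem \ref{thm-residue} from Theorem \ref{thm-integration} by evaluating the limit $\lim_{\epsilon\to 0^+}I^\eta(\epsilon)$ in two different ways. First I would integrate over $M$ before integrating over $\fg$ in \eqref{eqn-ieta-defn}. Since $d_G\alpha(\phi)=d\alpha-\ev{\mu,\phi}$, the equivariant exponential splits as $e^{id_G\alpha(\phi)}=e^{-i\ev{\mu,\phi}}e^{id\alpha}$, so $\alpha\wedge\eta(\phi)\wedge e^{id_G\alpha(\phi)}$ is an equivariantly closed Reeb basic form. Applying the localization formula of Theorem \ref{thm-localization} to it, and using that the basic contact moment map $\mu$ is constant equal to $\mu(C_j)$ on each connected component $C_j$ of $C=\Crit\mu$ (a standard property of moment maps along their critical sets), the scalar $e^{-i\ev{\mu(C_j),\phi}}$ factors out of each $i_j^\ast$, giving
\[\Phi(\phi):=\int_M\alpha\wedge\eta(\phi)\wedge e^{id_G\alpha(\phi)}=\sum_{C_j\subseteq C}e^{-i\ev{\mu(C_j),\phi}}\int_{C_j}\frac{i_j^\ast\!\left(\alpha\wedge\eta(\phi)\wedge e^{id\alpha}\right)}{e_G(\nu C_j,\CF)}.\]
This $\Phi$ is precisely the argument of $\jkres$ in the statement, and Fubini rewrites \eqref{eqn-ieta-defn} as
\[I^\eta(\epsilon)=\frac{1}{(2\pi i)^s\,\vol(G)}\int_{\fg}\Phi(\phi)\,e^{-\epsilon|\phi|^2/2}\,d\phi.\]

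Next I would compute $\lim_{\epsilon\to 0^+}I^\eta(\epsilon)$ from each side. From Theorem \ref{thm-integration}, letting $\epsilon\to 0^+$ kills the factor $e^{\epsilon\Theta}$ and the error $o(\epsilon^{-s/2}e^{-c/\epsilon})$, so the limit equals $\tfrac{1}{n_0}\int_{M_0}\alpha_0\wedge\eta_0\wedge e^{id\alpha_0}$. On the other side, $\Phi$ is the integral over the compact manifold $M$ of an integrand that is smooth in $\phi$, hence is itself smooth; its Fourier transform $\BF(\Phi)$ is—by the two properties established while proving Theorem \ref{thm-integration}—piecewise polynomial and, since $0$ is a regular value of $\mu$, smooth in a neighbourhood of the origin. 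This is exactly the class of data to which the Jeffrey–Kirwan residue applies: passing to Fourier transforms turns the Gaussian regularization into integration of $\BF(\Phi)$ against a Gaussian approximating $\delta_0$, and the Jeffrey–Kirwan residue theorem identifies the resulting limit with the residue,
\[\lim_{\epsilon\to 0^+}\int_{\fg}\Phi(\phi)\,e^{-\epsilon|\phi|^2/2}\,d\phi=(2\pi i)^s\,\jkres\!\left(\Phi(\phi)\,[d\phi]\right).\]
Equating the two evaluations of the limit and clearing the common factor $\tfrac{1}{(2\pi i)^s\vol(G)}$ produces the claimed identity, with the factor $n_0$ supplied by Theorem \ref{thm-integration} and the powers of $2\pi i$ cancelling.

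The main obstacle is the identification of the Gaussian-regularized limit with $\jkres$. Concretely, one must verify that the localized summands genuinely lie in the Jeffrey–Kirwan class, i.e.\ that each basic equivariant Euler class $e_G(\nu C_j,\CF)$ factors into a product of honest integral weights so that the summand is a rational function of $\phi$ with poles along the expected hyperplanes, and then match the explicit polynomial germ of $\BF(\Phi)$ at the origin—determined in the proof of Theorem \ref{thm-integration}—with the output of the residue operation. The delicate bookkeeping of the normalization constants $\vol(G)$, $n_0$ and the powers of $2\pi i$, together with the choice of polarization implicit in $\jkres$, is where care is required; once the Jeffrey–Kirwan formalism is seen to apply verbatim to the localized basic data, the residue formula follows.
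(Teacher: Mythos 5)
Your proposal is correct in substance, but it is routed differently from the paper's proof, and the difference is worth spelling out. The paper never takes a limit $\epsilon \to 0^+$ in proving Theorem \ref{thm-residue}: it combines Proposition \ref{prop-q0} --- which simultaneously establishes that $Q^\eta = \BF\left(\Pi_*(\eta\wedge e^{id_G\alpha})\right)$ is smooth near $0$ \emph{and} computes its value there to be $\tfrac{i^s(2\pi)^{s/2}\vol(G)}{n_0}\int_{M_0}\alpha_0\wedge\eta_0\wedge e^{id\alpha_0}$ --- with two results of Jeffrey--Kirwan: Proposition 8.6 of \cite{JeffreyKirwan95} (compact support of $Q^\eta$, supplied by Lemma \ref{lemma Q compact supp}, makes $\jkres^\Lambda$ independent of the cone $\Lambda$) and Proposition 8.7 of \cite{JeffreyKirwan95} (which gives $i^{-s}(2\pi)^{-s/2}\BF(f)(0)=\jkres^\Lambda(f\,d\phi)$ when $\BF f$ is compactly supported and smooth near $0$); inserting the localization expression \eqref{eq PI eta e idG alpha as sum} then finishes the proof. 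Your route instead evaluates $\lim_{\epsilon\to 0^+}I^\eta(\epsilon)$ twice, once via Theorem \ref{thm-integration} and once via the bridging identity $\lim_{\epsilon\to 0^+}\int_\fg \Phi\, e^{-\epsilon|\phi|^2/2}\,d\phi = (2\pi i)^s\,\jkres(\Phi\,[d\phi])$. That identity is true, and your constant $(2\pi i)^s$ is exactly right, but it is not a single quotable ``Jeffrey--Kirwan residue theorem'': it is precisely the concatenation of (i) the delta-approximation step $\lim_{\epsilon}\epsilon^{-s/2}\int Q^\eta(y)e^{-|y|^2/2\epsilon}dy = (2\pi)^{s/2}Q^\eta(0)$, which requires the compact support, piecewise polynomiality and continuity of $Q^\eta$ at $0$ (the content of Lemma \ref{lemma Q compact supp}, Proposition \ref{prop-fourier-polynomial}, Lemma \ref{lemma-polynomial-asymptotics} and Proposition \ref{prop-q0}), with (ii) the same two Jeffrey--Kirwan propositions the paper cites. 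So once your bridge is filled in, your argument consumes exactly the paper's ingredients, and the detour through Theorem \ref{thm-integration} buys nothing: both sides of your limit-equation reduce to statements about $Q^\eta(0)$, which the paper equates directly, limit-free. Two of the verifications you flag as obstacles are already discharged in the paper: the factorization of $e_G(\nu C_j,\CF)$ into weights is Equation \eqref{eq e G is prod} in the proof of Proposition \ref{prop-fourier-polynomial}, and the matching of the polynomial germ of $Q^\eta$ at the origin is exactly the final statement of Proposition \ref{prop-q0}.
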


\begin{remark} 
In \S\ref{sec-boothby} we explain in detail how Theorems \ref{thm-localization} and \ref{thm-residue} may be used to deduce the analogous theorems for symplectic manifolds that occur as $M/\CF$ in the case that $R$ induces a free $S^1$-action. 
In this sense, these theorems provide a strict generalization of their symplectic analogues, at least in the case of an integral symplectic form and a Hamiltonian group action that lifts to the $S^1$-bundle in the Boothby-Wang fibration \cite{BoothbyWang}.
\end{remark}

\begin{remark} The first named author has obtained a surjectivity result for the basic contact Kirwan map \cite{Casselmann2016}. 
Since basic cohomology satisfies Poincar\'e duality (see Lemma \ref{lemma-poincare}), Theorem \ref{thm-residue} in principle provides a method to compute the kernel of the basic Kirwan map, and therefore allows one to compute the basic cohomology ring of the quotient.
\end{remark}

\subsection*{Acknowledgements}
We thank Oliver Goertsches and Lisa Jeffrey for helpful discussions and critical reading of the manuscript, and the anonymous referee for their careful reading and constructive comments on the paper. This research was conducted as part of GRK 1670 "Mathematics inspired by string theory and quantum field theory", funded by the Deutsche Forschungsgemeinschaft (DFG).

\section{\texorpdfstring{$K$}{K}-Contact Manifolds}\label{sec K contact}
\subsection{Contact Manifolds}

Let $M$ be a smooth connected $2n+1$-dimensional manifold. A \emph{contact form} on $M$ is a 1-form $\alpha$ such that $\alpha \wedge (d\alpha)^n$ is nowhere vanishing. A \emph{contact manifold}
is such a pair $(M, \alpha)$. 
Note that we take the contact form $\alpha$, and not just the induced hyperplane distribution $\ker \alpha$, as part of the data defining a contact manifold. On any such manifold there is a distinguished vector field, called the \emph{Reeb vector field} (which we usually denote by $R$), which is uniquely determined by the two conditions 
\begin{align*}
  \iota_R \alpha = 1, \quad   \iota_R d\alpha = 0. 
\end{align*}
Note that these conditions imply that $L_R \alpha = 0$. The contact form gives a direct sum decomposition $ T M = \ker \alpha \oplus \ev{R}$,
and we note that $\ker\alpha$ is a symplectic vector bundle over $M$ with symplectic form $d\alpha$.

\begin{definition} A \emph{contact metric} $g$ on $(M, \alpha)$ is a Riemannian metric $g$ on $M$, such that under the decomposition $TM \cong \ker\alpha \oplus \ev{R}$, we have
$g = g' \oplus (\alpha \otimes \alpha)$, where $g'$ is a $d\alpha$-compatible metric on $\ker \alpha$. We say that $(M, \alpha, g)$ is \emph{$K$-contact} if $g$ is a contact metric for which the Reeb vector field is Killing, i.e., such that $L_Rg =0$.
\end{definition}

The Reeb vector field $R$ generates a free $\mathbb{R}$-action on $M$ and induces a foliation $\CF$ on $M$. 
However, because the $\mathbb{R}$-action is usually not proper, the space of leaves $M / \CF$ can be badly behaved and is not necessarily a manifold. We work with compact $M$. The $K$-contact condition then implies the following, which is the main technical tool which allows us to overcome this difficulty.

Since $R$ is Killing, its flow $\phi_t$ generates a 1-parameter subgroup of the group of isometries of $(M,g)$. Since $M$ is compact, $\operatorname{Iso}(M, g)$ is a compact Lie group and, hence, the closure of $\phi_t$ in $\operatorname{Iso}(M, g)$ is a torus $T$. By construction, $R$ is the fundamental vector field of a topological generator of $T$. Since $\phi_t^\ast \alpha = \alpha$, it follows that $\alpha$ is preserved by all of $T$.

\subsection{Basic cohomology}

We define the \emph{$\CF$-basic} (or simply \emph{basic}) forms on $M$ to be
\begin{equation*} \Omega^k(M, \CF) = \{ \eta \in \Omega^k(M)  \ | \  L_X \eta = 0 = \iota_X \eta \ \forall X \in \mathfrak{X}(\CF)\}, \end{equation*}
where $\mathfrak{X}(\CF)$ denotes the vector fields which are tangent to the foliation.
Then one immediately sees that $d\Omega^k(M, \CF) \subseteq \Omega^{k+1}(M, \CF)$, so that $\Omega(M, \CF)$ is a subcomplex of the de Rham complex of $M$. Recall that $M$ is compact.
There is a natural Poincar\'e pairing on basic forms defined by
\begin{equation*} (\xi, \eta) \mapsto \int_M \alpha \wedge \xi \wedge \eta.
\end{equation*}

\begin{definition} The \emph{basic cohomology} of $(M, \CF)$, denoted by $H^*(M, \CF)$, is the cohomology of the complex $\Omega(M, \CF)$.
\end{definition}

\begin{lemma} \label{lemma-poincare}
The Poincar\'e pairing descends to a well-defined pairing on basic cohomology. If $M$ is a compact $K$-contact manifold, then the basic cohomology groups are finite-dimensional, $H^r(M, \CF)=0$ for $r>2n$ and the Poincar\'e pairing is non-degenerate. 
\end{lemma}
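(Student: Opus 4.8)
The plan is to dispatch the four assertions separately: descent of the pairing and the vanishing in high degree are elementary and need only the contact structure, whereas finiteness and non-degeneracy rest on the Hodge theory of the Riemannian foliation $\CF$.

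First I would record that there are no nonzero basic forms of degree exceeding $2n$. Any $(2n+1)$-form can be written $f\,\alpha\wedge(d\alpha)^n$, and $\iota_R\big(f\,\alpha\wedge(d\alpha)^n\big)=f\,(d\alpha)^n$ using $\iota_R\alpha=1$ and $\iota_Rd\alpha=0$; since $d\alpha$ is symplectic on $\ker\alpha$, the $2n$-form $(d\alpha)^n$ is nowhere vanishing, so $\iota_R$ of the form vanishes only when $f\equiv 0$. As a basic form is annihilated by $\iota_R$, we get $\Omega^{r}(M,\CF)=0$ for $r>2n$, hence $H^r(M,\CF)=0$ there. The same fact makes the pairing descend: if $\xi=d\gamma$ with $\gamma,\eta$ basic and $d\eta=0$, then because $d\alpha$ is basic ($L_R\alpha=0$, $\iota_Rd\alpha=0$) Stokes' theorem applied to $\alpha\wedge\gamma\wedge\eta$ on the closed manifold $M$ gives $\int_M\alpha\wedge d\gamma\wedge\eta=\int_M d\alpha\wedge\gamma\wedge\eta$, whose integrand is a basic $(2n+1)$-form and therefore vanishes identically; the symmetric computation handles an exact second argument. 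Note this step uses only compactness and the contact structure, not the $K$-contact hypothesis.

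For finiteness the crucial input is that $\CF$ is a Riemannian foliation: since $R$ is Killing, its flow $\phi_t$ preserves $g$ and hence the transverse metric induced on $\ker\alpha\cong TM/\langle R\rangle$, so $g$ is bundle-like. On a compact manifold the basic de Rham complex of a Riemannian foliation carries a basic Hodge theory (El Kacimi-Alaoui): one builds a basic Laplacian that is transversally elliptic with finite-dimensional kernel and establishes an isomorphism $H^k(M,\CF)\cong\mathcal H^k_b$ onto the space of basic harmonic forms. I would invoke this machinery directly to conclude that the basic cohomology groups are finite-dimensional.

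For non-degeneracy I would use the transverse Hodge star $\bar\ast\colon\Omega^k(M,\CF)\to\Omega^{2n-k}(M,\CF)$ attached to the transverse metric and the transverse orientation supplied by $\tfrac{1}{n!}(d\alpha)^n$. The decisive point is tautness: in the $K$-contact case $R$ is a unit Killing field with $\nabla_R R=0$, so its orbits are geodesics and $\CF$ is a minimal, hence taut, foliation with vanishing mean curvature; consequently the basic Laplacian commutes with $\bar\ast$ and the latter restricts to an isomorphism $\mathcal H^k_b\xrightarrow{\sim}\mathcal H^{2n-k}_b$. Representing a class by its harmonic representative $\xi$ and using $\mathrm{vol}_M=\alpha\wedge\tfrac{1}{n!}(d\alpha)^n$, I would compute $\int_M\alpha\wedge\xi\wedge\bar\ast\xi=\int_M\langle\xi,\xi\rangle_\perp\,\mathrm{vol}_M>0$ unless $\xi\equiv 0$, where $\langle\,,\rangle_\perp$ is the transverse inner product; since $\bar\ast\xi$ is again harmonic, hence closed, this positivity shows no nonzero class pairs trivially and forces the pairing to be non-degenerate.

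The elementary parts are immediate, so the real content lies in the foliated Hodge theory behind finiteness and, above all, the Poincaré duality behind non-degeneracy. I expect the main obstacle to be establishing tautness of $\CF$ and the resulting compatibility of $\bar\ast$ with the basic Laplacian: without tautness the transverse star need not preserve harmonicity and the top basic cohomology could vanish, so the positivity argument would collapse. The cleanest route is to invoke the established theory of Riemannian foliations (El Kacimi-Alaoui for finiteness and the basic Hodge decomposition; the tautness and Poincaré-duality results in the work of Masa, Tondeur, and Habib-Richardson), verifying only the $K$-contact-specific inputs, namely that $g$ is bundle-like and that the Reeb orbits are geodesics.
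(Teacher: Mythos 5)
Your proposal is correct, and it is essentially the paper's own approach: the paper proves this lemma simply by citing Proposition~7.2.3 of Boyer--Galicki's \emph{Sasakian geometry}, and the content of that reference is exactly the argument you reconstruct --- the elementary vanishing of basic forms in degree above $2n$ and the Stokes argument for descent, plus transverse Hodge theory for the Riemannian foliation $\CF$ (bundle-like metric from $R$ Killing, finiteness \`a la El Kacimi-Alaoui et al., and tautness of the geodesic Reeb foliation making the transverse star preserve basic harmonic forms, which yields non-degeneracy). In effect you have supplied the proof that the paper delegates to the literature, with the correct identification of the $K$-contact-specific inputs.
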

\begin{proof} 
See, e.g., \cite[Proposition~7.2.3]{boyer2008sasakian}.
\end{proof}

\subsection{Equivariant basic cohomology}
We now define equivariant basic cohomology and give its basic properties. See \cite{GNTequivariant, ToebenLoc, GNTlocalization, Casselmann2016} for related constructions. We suppose now that a torus $G$ acts on $M$, preserving the contact form.
Then the action of $G$ commutes with the flow of the Reeb vector field. In particular, the action of $G$ preserves the foliation $\CF$ and commutes with the $T$-action.

Let $\fg$ denote the Lie algebra of $G$. For any $\xi \in \fg$, we let $\xi_M$ denote the corresponding fundamental vector field on $M$, defined as
\[ \xi_M(x) = \left.\tfrac{d}{dt}\exp(t\xi) \cdot x \right|_{t=0}. \] 
Let $S(\fg^\ast)$ denote the symmetric algebra of $\fg^\ast$ (i.e. $S(\fg^\ast)$ is the algebra of polynomial functions on the vector space $\fg$). The complex of equivariant differential forms $\Omega_G(M)$ is the complex with underlying vector space
\begin{equation*} \Omega_G(M) = \left(S(\fg^\ast)\otimes \Omega(M) \right)^G,  \end{equation*}
with grading $\deg(f \otimes \eta) = 2 \deg(f) + \deg(\eta)$ and differential
\begin{equation*} d_G(f \otimes \eta)(\xi) = (d\eta) f(\xi) - (\iota_{\xi_M} \eta)f(\xi). \end{equation*}
(Note the sign in the differential -- this is chosen to be consistent with \cite{JeffreyKirwan95}.)

\begin{definition} The complex $\Omega_G(M, \CF)= \left(S(\fg^\ast)\otimes \Omega(M,\CF) \right)^G$ of \emph{equivariant basic forms} is the subcomplex of $\Omega_G(M)$ consisting of basic equivariant differential forms. The \emph{equivariant basic cohomology of $M$} is the cohomology of this subcomplex, denoted by $H_G(M, \CF)$. 
We also denote by $\Omega_{G,c}(M), H_{G,c}(M)$, etc. the complex of compactly supported equivariant differential forms, classes, etc.
\end{definition}

\begin{remark} The complexes $\Omega_G(M, \CF)$, $\Omega_G(M)$ and their cohomologies $H_G(M, \CF)$, $H_G(M)$ are all naturally modules over $H_G:=$ $H_G(\mathrm{point})$ $\cong S(\fg^\ast)$.
\end{remark}

\begin{remark} More generally, one can define (equivariant) basic cohomology on the category of pairs $(M, \CF_M)$ consisting of a manifold $M$ with regular foliation $\CF_M$, (acted upon by $G$ s.t. $\Omega(M, \CF_M)$ is a $G^*$-algebra (cf.~\cite[Definition~2.3.1]{GS99})), 
and morphisms $(M, \CF_M) \to (N, \CF_N)$ given by (equivariant) foliation-preserving smooth maps, i.e. smooth maps which take leaves to leaves. 
In particular, the $H_G$-module structure on $H_G(M, \CF)$ is induced by the pullback of the map projecting $M$ to the 1-point manifold with trivial foliation.
\end{remark}

\begin{lemma}[{\cite[Proposition~10]{Casselmann2016}}]\label{lemma if homotopy equiv then cohom isom}
	Let $A,B\subset M$ be $G\times \{\phi_t\}$-in\-va\-ri\-ant
	submanifolds and assume that we have $G\times \{\phi_t\}$-equivariant homotopy inverses $f:A\to B$ and $g:B\to A$. Then $f^*: H_G(B, \CF)\to H_G(A, \CF)$ is an isomorphism with inverse $g^*$.
\end{lemma}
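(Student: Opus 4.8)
The plan is to reduce the statement to homotopy invariance of equivariant basic cohomology and then to exhibit an explicit chain homotopy adapted to the Reeb foliation. Since $A$ and $B$ are $\{\phi_t\}$-invariant, the Reeb field $R$ is tangent to both, so $\CF$ restricts to foliations $\CF_A$, $\CF_B$ and the complexes $\Omega_G(A,\CF)$, $\Omega_G(B,\CF)$ are defined. Because $(g\circ f)^\ast = f^\ast\circ g^\ast$ and $(f\circ g)^\ast = g^\ast\circ f^\ast$, and since $g\circ f\simeq\mathrm{id}_A$ and $f\circ g\simeq\mathrm{id}_B$ through $G\times\{\phi_t\}$-equivariant homotopies, it suffices to prove that two $G\times\{\phi_t\}$-equivariantly homotopic maps $h_0,h_1\colon A\to B$ induce the same homomorphism $H_G(B,\CF)\to H_G(A,\CF)$. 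Granting this, $f^\ast\circ g^\ast=\mathrm{id}$ on $H_G(A,\CF)$ and $g^\ast\circ f^\ast=\mathrm{id}$ on $H_G(B,\CF)$, so $f^\ast$ is an isomorphism with inverse $g^\ast$.

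To produce the homotopy operator, let $H\colon A\times[0,1]\to B$ be such a homotopy, with $G$ acting trivially on $[0,1]$ and the Reeb flow extended as $\phi_s\times\mathrm{id}$. First I would set up the fibre-integration operator $K\colon\Omega_G(A\times[0,1])\to\Omega_G(A)$: writing an equivariant form as $\omega(\xi)=a(\xi)+dt\wedge b(\xi)$ with $a,b$ free of $dt$, put $(K\omega)(\xi)=\int_0^1 b(\xi,t)\,dt$. Because $G$ acts trivially on the interval, each fundamental vector field $\xi_{A\times[0,1]}$ equals $\xi_A$ and hence annihilates $dt$; a direct computation then gives $\iota_\xi K+K\iota_\xi=0$, while the classical de Rham homotopy identity gives $dK+Kd=j_1^\ast-j_0^\ast$ for the endpoint inclusions $j_0,j_1\colon A\to A\times[0,1]$. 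Combining these with $d_G=d-\iota_\xi$ yields the equivariant homotopy formula $d_GK+Kd_G=j_1^\ast-j_0^\ast$, and composing with $H^\ast$ produces $K_H:=K\circ H^\ast$ with $h_1^\ast-h_0^\ast=d_GK_H+K_Hd_G$ on $\Omega_G(B)$.

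The crux, and the step I expect to be the main obstacle, is showing that $K_H$ carries the basic subcomplex into the basic subcomplex. Equip $A\times[0,1]$ with the foliation $\tilde\CF$ generated by $\tilde R:=(R,0)$, whose basic forms are those killed by $\iota_{\tilde R}$ and $L_{\tilde R}$. Differentiating the $\{\phi_t\}$-equivariance of $H$ in the flow parameter shows precisely that $\tilde R$ and the Reeb field $R_B$ are $H$-related, whence $\iota_{\tilde R}H^\ast=H^\ast\iota_{R_B}$ and $L_{\tilde R}H^\ast=H^\ast L_{R_B}$, so $H^\ast$ takes $\CF_B$-basic forms to $\tilde\CF$-basic forms. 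Since $\tilde R$ is tangent to the $A$-factor while $\iota_{\partial_t}$ and $\int_0^1(\cdot)\,dt$ act in the interval direction, $\iota_{\tilde R}$ and $L_{\tilde R}$ commute with $K$; decomposing a $\tilde\CF$-basic form by its $dt$-component then shows $K$ sends it to a $\CF_A$-basic form. Hence $K_H$ restricts to an operator $\Omega_G(B,\CF)\to\Omega_G(A,\CF)$, and together with $G$-equivariance, which handles the $S(\fg^\ast)$-factor, the homotopy identity holds inside the basic complexes. Passing to cohomology gives $h_0^\ast=h_1^\ast$, completing the argument. The only genuinely nonstandard ingredient is this foliation-compatibility of $K_H$, which is exactly where the hypothesis of $\{\phi_t\}$-equivariance, rather than mere $G$-equivariance, is used.
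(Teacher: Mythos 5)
Your proof is correct and follows the standard homotopy-operator argument: reduce to showing that $G\times\{\phi_t\}$-equivariantly homotopic maps induce equal maps on $H_G(\cdot,\CF)$, establish the equivariant homotopy formula $d_G K + K d_G = j_1^* - j_0^*$ via fibre integration over $[0,1]$ (using $\iota_\xi K + K \iota_\xi = 0$, which holds because $G$ acts trivially on the interval), and verify that $K \circ H^*$ preserves the basic subcomplexes because $\{\phi_t\}$-equivariance of the homotopy makes $\tilde R = (R,0)$ and $R_B$ $H$-related --- correctly identifying this last point as the one place where $\{\phi_t\}$-equivariance, rather than mere $G$-equivariance, is needed. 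The paper itself gives no proof, deferring to \cite[Proposition~10]{Casselmann2016}, whose argument is this same one, so your write-up essentially reconstructs the outsourced proof.
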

The group $G\times \{\phi_t\}$ is in general non-compact, which complicates finding, e.g., invariant objects or tubular neighborhoods. As mentioned above, the tool to overcome this obstacle is considering the closure $T$ of $\{\phi_t\}$, in particular, we often consider the action of the torus $G\times T$. 
A closed $G\times \{\phi_t\}$-invariant submanifold $A\subset M$ is automatically $G\times T$-invariant, hence, there exist arbitrarily small $G\times T$-invariant tubular neighborhoods that retract onto $A$. 
These retractions are, in particular, $G\times \{\phi_t\}$-equivariant. Lemma \ref{lemma  if homotopy equiv then cohom isom} and the corresponding well known statement in ordinary equivariant cohomology then yield

\begin{lemma} \label{lemma-bundle-cohomology}
Let $i: A \hookrightarrow M$ be the inclusion of a $G \times T$-invariant submanifold, and let $U$ be a $G \times T$-invariant tubular neighborhood of $A$ in $M$. Let $p: U \to A$ denote the projection map. 
Then $i^*: H_G(U)\to H_G(A)$ and  $i^*: H_G(U,\CF)\to H_G(A,\CF)$ are isomorphisms with inverse $p^*$.
\end{lemma}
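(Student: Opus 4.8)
The plan is to realize the inclusion $i \colon A \hookrightarrow U$ and the projection $p \colon U \to A$ as mutually inverse $G \times T$-equivariant homotopy equivalences, and then to feed this into Lemma~\ref{lemma if homotopy equiv then cohom isom} for the basic statement and into ordinary equivariant homotopy invariance for the non-basic one. The key observation that makes everything equivariant is the one stressed in the discussion preceding the statement: although $G \times \{\phi_t\}$ is noncompact, the submanifold $A$ and its tubular neighborhood $U$ are invariant under the \emph{compact} torus $G \times T$, so all of the standard Riemannian constructions can be carried out equivariantly and then restricted to $G \times \{\phi_t\}$.

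First I would fix a $G \times T$-invariant Riemannian metric on $M$, which exists by averaging an arbitrary metric over the compact group $G \times T$. Using its exponential map one identifies the tubular neighborhood $U$ with (a disc bundle in) the normal bundle $\nu A$ by a $G \times T$-equivariant diffeomorphism, under which $i$ becomes the zero section and $p$ the bundle projection; in particular $p \circ i = \mathrm{id}_A$ exactly. The fiberwise scaling $H \colon U \times [0,1] \to U$, $H(v, s) = s v$, is then a deformation retraction of $U$ onto $A$ with $H_1 = \mathrm{id}_U$ and $H_0 = i \circ p$. Because the $G \times T$-action on $\nu A$ is fiberwise linear, $H$ is $G \times T$-equivariant, and a fortiori $G \times \{\phi_t\}$-equivariant. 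Thus $i$ and $p$ are $G \times \{\phi_t\}$-equivariant homotopy inverses of one another.

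With this in hand the two claims are immediate. For basic cohomology, applying Lemma~\ref{lemma if homotopy equiv then cohom isom} to the $G \times \{\phi_t\}$-invariant submanifolds $A$ and $U$, with $f = i$ and $g = p$, gives that $i^* \colon H_G(U, \CF) \to H_G(A, \CF)$ is an isomorphism with inverse $p^*$. For ordinary equivariant cohomology, the maps $i$ and $p$ are in particular $G$-equivariant homotopy inverses and $G$ is compact, so the homotopy invariance of equivariant de Rham cohomology in the Cartan model yields that $i^* \colon H_G(U) \to H_G(A)$ is an isomorphism with inverse $p^*$.

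I expect the only real subtlety to be the one already anticipated before the statement: producing the homotopy $H$ and the tubular-neighborhood identification \emph{equivariantly} with respect to the noncompact group $G \times \{\phi_t\}$. This is precisely what forces one to pass to the compact closure $T$ of $\{\phi_t\}$ — invariant metrics, the exponential map, and the fiber scaling are all available for the compact torus $G \times T$, and their $G \times \{\phi_t\}$-equivariance is then automatic by restriction. Everything else is a direct citation of Lemma~\ref{lemma if homotopy equiv then cohom isom} and of standard equivariant homotopy invariance.
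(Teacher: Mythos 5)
Your proposal is correct and follows essentially the same route as the paper: the paper likewise observes that the $G\times T$-invariant tubular neighborhood retracts $G\times T$-equivariantly (hence $G\times\{\phi_t\}$-equivariantly) onto $A$, and then cites Lemma~\ref{lemma if homotopy equiv then cohom isom} together with the standard homotopy invariance of ordinary equivariant cohomology. The only difference is that you spell out the construction of the equivariant retraction (invariant metric, exponential map, fiberwise scaling), which the paper leaves implicit.
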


\begin{definition}\label{def rel cohom} Let $A \subseteq M$ be a $G \times T$-invariant closed submanifold of $M$. We define the complex $\Omega_G(M, A, \CF)$ to be the kernel of the pullback $\Omega_G(M, \CF) \to \Omega_G(A, \CF)$. 
Since the pullback commutes with the differential, $\Omega_G(M, A, \CF)$ is a differential subcomplex of $\Omega_G(M, \CF)$. We denote its cohomology by $H_G(M, A,\CF)$.
\end{definition}
\begin{proposition}\label{prop LES cohom} There is a natural long exact sequence in equivariant basic cohomology
\[ \cdots \to H_G^k(M, A,\CF) \to H_G^k(M,\CF) \to H_G^k(A,\CF) \to \cdots \]
\end{proposition}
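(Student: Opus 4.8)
The plan is to construct the long exact sequence as the standard cohomology long exact sequence associated to a short exact sequence of cochain complexes. By Definition \ref{def rel cohom}, we have the complex $\Omega_G(M, A, \CF)$ defined as the kernel of the pullback map $i^*: \Omega_G(M, \CF) \to \Omega_G(A, \CF)$, where $i: A \hookrightarrow M$ is the inclusion. The first step is therefore to establish the short exact sequence of complexes
\begin{equation*}
  0 \to \Omega_G(M, A, \CF) \to \Omega_G(M, \CF) \xrightarrow{i^*} \Omega_G(A, \CF) \to 0,
\end{equation*}
where the first map is the inclusion of the kernel. Exactness on the left and in the middle is immediate from the definition of the kernel. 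The content is showing that $i^*$ is surjective, i.e. that every equivariant basic form on $A$ extends to one on $M$. Since all three arrows commute with $d_G$, the Snake Lemma (or the standard zig-zag construction) then produces the asserted long exact sequence in cohomology, with the connecting homomorphism $H_G^k(A, \CF) \to H_G^{k+1}(M, A, \CF)$ built in the usual way. Naturality follows because the construction is functorial in the pair $(M, A)$.

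The heart of the argument, and the step I expect to be the main obstacle, is surjectivity of the pullback $i^*: \Omega_G(M, \CF) \to \Omega_G(A, \CF)$. In the non-foliated, non-equivariant de Rham setting this is routine: one uses a tubular neighborhood and a bump function to extend any form. Here, however, one must produce an extension that is simultaneously equivariant basic, i.e. $G$-invariant, basic with respect to $\CF$ (annihilated by $\iota_X$ and $L_X$ for $X$ tangent to the Reeb foliation), and polynomial in the $\fg$-variable. My plan is to exploit the $G \times T$-invariant tubular neighborhood $U$ of $A$ furnished by the discussion preceding Lemma \ref{lemma-bundle-cohomology}, together with the projection $p: U \to A$. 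Given $\beta \in \Omega_G(A, \CF)$, the pullback $p^* \beta$ is an equivariant basic form on $U$ restricting to $\beta$ on $A$; one then multiplies by a cutoff function that is identically $1$ near $A$ and supported in $U$, chosen to be $G \times T$-invariant by averaging over the compact torus $G \times T$. Averaging over $G \times T$ rather than merely $G$ is exactly what guarantees the cutoff is both $G$-invariant and $\CF$-basic, since $T$ is the closure of the Reeb flow $\{\phi_t\}$ and $T$-invariance of a function forces its $\CF$-basicness at the level of the multiplicand; because the cutoff is a function (a $0$-form) its product with $p^*\beta$ remains equivariant basic and extends by zero to all of $M$.

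The remaining verification is routine bookkeeping: one checks that multiplication by a $G \times T$-invariant cutoff preserves the conditions $\iota_X = 0$ and $L_X = 0$ defining basic forms (using that the cutoff is a basic $0$-form and that these operators are derivations satisfying the Leibniz rule), that $G$-invariance and polynomial dependence on $\fg$ are preserved, and that the extended form restricts correctly to $\beta$ on $A$. With surjectivity of $i^*$ in hand, the long exact sequence is then immediate from homological algebra, and I would state it with no further comment beyond invoking the Snake Lemma. I do not anticipate difficulty with naturality or with the algebraic portion; the entire subtlety is concentrated in producing an \emph{equivariant basic} extension, which the passage to the compact torus $T \supseteq \{\phi_t\}$ resolves cleanly.
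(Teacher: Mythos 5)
Your proposal is correct and takes essentially the same route as the paper: the long exact sequence is deduced by standard homological algebra from the short exact sequence of complexes $0 \to \Omega_G(M, A, \CF) \to \Omega_G(M, \CF) \to \Omega_G(A, \CF) \to 0$. The paper simply asserts this short exact sequence; your extension argument via a $G \times T$-invariant tubular neighborhood, the pullback $p^*$, and a $G \times T$-invariant cutoff correctly supplies the surjectivity of the restriction map that the paper leaves implicit (and it matches the technique the paper itself uses in the proof of Proposition \ref{prop rel cohom iso comp supp}).
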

\begin{proof} By standard homological algebra, this follows from the existence of the short exact sequence $ 0 \to \Omega_G(M, A, \CF) \to \Omega_G(M, \CF) \to \Omega_G(A, \CF) \to 0$.
\end{proof}

\begin{proposition}\label{prop rel cohom iso comp supp} We have an isomorphism 
$H_G(M, A,\CF) \cong  H_{G,c}(M \setminus  A, \CF)$.
\end{proposition}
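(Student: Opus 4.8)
The plan is to show that the natural extension-by-zero map
\[ j \colon \Omega_{G,c}(M \setminus A, \CF) \longrightarrow \Omega_G(M, A, \CF) \]
is a quasi-isomorphism. First I would check that $j$ is well defined: a compactly supported equivariant basic form on the open set $M \setminus A$ has support disjoint from some neighborhood of $A$, so extending it by zero produces a smooth form on $M$ that vanishes identically near $A$; in particular it vanishes on $A$ and hence lies in $\ker(i_A^*) = \Omega_G(M,A,\CF)$, where $i_A \colon A \hookrightarrow M$. Extension by zero is injective and commutes with $d_G$, so $j$ is an injective chain map whose image is exactly the subcomplex of forms vanishing on a neighborhood of $A$. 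The content of the proposition is that this inclusion of ``vanishing near $A$'' into ``vanishing on $A$'' is a cohomology isomorphism.

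The core of the argument is to promote any relative class to one that vanishes near $A$, using a tubular neighborhood. Since $A$ is a closed $G\times T$-invariant submanifold, I would fix a $G\times T$-invariant tubular neighborhood $U \cong \nu A$ together with the fiberwise scaling $m_t \colon U \to U$, $t \in [0,1]$, with $m_1 = \mathrm{id}$ and $m_0 = i_A \circ p$ for the projection $p \colon U \to A$. Because $U$ and the metric defining $\nu A$ can be chosen $G\times T$-invariant, the maps $m_t$ are foliation-preserving and $G$-equivariant, so the associated Cartan homotopy operator $K$ satisfies the equivariant formula $m_1^* - m_0^* = d_G K + K d_G$ on $\Omega_G(U, \CF)$, maps basic forms to basic forms, and sends forms to forms vanishing on $A$ (the scaling fixes the zero section). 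The non-properness of $\{\phi_t\}$ is irrelevant here precisely because we work with its compact closure $T$, exactly as in Lemma \ref{lemma-bundle-cohomology}. I would also fix a $G\times T$-invariant basic cutoff $\chi$ that equals $1$ near $A$ and is supported in $U$.

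Surjectivity of $j_*$ then goes as follows. Given a $d_G$-closed $\omega \in \Omega_G(M,A,\CF)$, the condition $i_A^*\omega = 0$ gives $m_0^*\omega = 0$, so on $U$ the homotopy formula yields $\omega = d_G\beta$ with $\beta := K\omega$, and $\beta$ again vanishes on $A$. Replacing $\omega$ by the form $\omega - d_G(\chi\beta)$, which is cohomologous to $\omega$ inside $\Omega_G(M,A,\CF)$ since $\chi\beta$ vanishes on $A$, produces a representative that equals $\omega - d_G\beta = 0$ near $A$, i.e.\ lies in the image of $j$. Injectivity is the same argument applied to a primitive: if $j(\sigma) = d_G\omega$ with $\omega \in \Omega_G(M,A,\CF)$, then $j(\sigma)$ is closed near $A$, the same cutoff-and-homotopy correction replaces $\omega$ by a primitive vanishing near $A$, hence by $j(\tau)$ for some compactly supported $\tau$, and injectivity of the chain map $j$ forces $\sigma = d_G\tau$. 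Equivalently, one may phrase this as a five-lemma comparison of the long exact sequence of Proposition \ref{prop LES cohom} with the compactly supported long exact sequence of the pair, the vertical maps being $j_*$ together with the identity on $H_G(M,\CF)$ and $H_G(A,\CF)$.

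I expect the main obstacle to be purely in the equivariant-basic bookkeeping rather than in the topology: one must verify that the tubular neighborhood, the scaling homotopy $m_t$, the homotopy operator $K$, and the cutoff $\chi$ can all be chosen simultaneously $G\times T$-invariant and compatible with the foliation, so that every intermediate form stays in $\Omega_G(-,\CF)$ and $K$ is genuinely a chain homotopy for the Cartan differential $d_G$. This is where the reduction from the non-proper group $G\times\{\phi_t\}$ to the torus $G\times T$ is essential, and it is exactly the mechanism already exploited in Lemmas \ref{lemma if homotopy equiv then cohom isom} and \ref{lemma-bundle-cohomology}.
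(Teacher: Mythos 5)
Your proposal is correct and takes essentially the same route as the paper's proof (which follows \cite[Theorem~11.1.1]{GS99}): extension by zero as a chain map, then, on a $G\times T$-invariant tubular neighborhood, a primitive of the given relative class vanishing on $A$, cut off by an invariant bump function to produce a cohomologous representative supported away from $A$, with injectivity handled by the same correction applied to a primitive. The only cosmetic difference is how that primitive is obtained: the paper invokes Lemma \ref{lemma-bundle-cohomology} and then corrects the resulting $\omega$ to $\lambda = \omega - \pi^*i^*\omega \in \Omega_G(U,A,\CF)$, whereas you inline the underlying mechanism by using the radial-scaling Cartan homotopy operator $K$, which yields a primitive vanishing on $A$ automatically.
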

\begin{proof} We follow the same line of arguments as in the usual equivariant case (see \cite[Theorem~11.1.1]{GS99}). Extending by $0$ gives a natural inclusion of equivariant basic forms
$ \Phi: \Omega_{G,c}(M \setminus A, \CF) \to \Omega_G(M, A, \CF)$.  
$\Phi$ induces an isomorphism on cohomology:
First, let $i:A\hookrightarrow U$ be a $G \times T$-invariant tubular neighborhood of $A$ and let $\eta \in \Omega_G(M, A, \CF)$ be an equivariantly closed form. 
Then by Lemma \ref{lemma-bundle-cohomology}, we can find $\omega \in \Omega_G(U,\CF)$ so that $\eta|_U = d_G\omega$. Then $i^*\omega$ is equivariantly closed, so $\lambda:=\omega-\pi^*i^*\omega$ satisfies $\lambda \in \Omega_G(U, A, \CF)$ and $\eta|_U = d_G\lambda$. 
Let $\rho$ be a $G\times T$-invariant smooth function which is identically 1 on some smaller neighborhood of $A$ and which is compactly supported in $U$. Then $\eta - d_G(\rho\lambda)\in \Omega_{G,c}(M \setminus A, \CF)$. This shows surjectivity. 
Now suppose that $\eta \in \Omega_{G,c}(M \setminus A, \CF)$ is in the kernel of the induced map on cohomology, i.e., that there exists $\lambda \in \Omega_G(M, A, \CF)$ such that $\eta = d_G\lambda$. 
Then since $\eta$ is compactly supported on $M \setminus A$, there exists a neighborhood $U$ of $A$ on which $\eta$ is identically zero. Therefore $\lambda$ is closed on $U$.  
Since $i^\ast \lambda =0$ by assumption, by Lemma \ref{lemma-bundle-cohomology}, as above, we have $\lambda = d_G\beta$ for some $\beta \in \Omega_G(U, A, \CF)$. 
Now let $\rho$ be an invariant smooth function which is identically 1 on a neighborhood of $A$ and which has compact support in $U$. 
Then $\widetilde\lambda := \lambda - d_G(\rho \beta) \in \Omega_{G,c}(M \setminus A, \CF)$ and we have $\eta = d_G\widetilde\lambda$. This shows injectivity.
\end{proof}

\subsection{The contact moment map}
Recall that for any $\xi \in \fg$, we let $\xi_M$ denote the corresponding fundamental vector field on $M$.
By the invariance of the contact form $\alpha$, we have
$ 0 = L_{\xi_M} \alpha = d(\iota_{\xi_M}\alpha) + \iota_{\xi_M} d\alpha$. 
Then the contact moment map for the $G$-action on $(M, \alpha)$ is the function $\mu: M \to \fg^\ast$ defined by
\[ \ev{\mu, \xi} = \alpha(\xi_M). \]
\begin{proposition}[{\cite[Lemmata 7 and 9]{Casselmann2016}}]\label{prop crit mu} Suppose that $0$ is a regular value of $\mu$. Then $\Crit \mu$ is the union of all 1-dimensional $G \times T$-orbits, and each connected component of $\Crit \mu$ is a closed submanifold of $M$ of even codimension.
\end{proposition}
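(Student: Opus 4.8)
The plan is to unwind the definition of $\Crit\mu$ using the identity $d\langle\mu,\xi\rangle=-\iota_{\xi_M}d\alpha$ noted above together with the fact that $d\alpha$ is symplectic on $\ker\alpha$ while $\iota_R d\alpha=0$, so that the radical of $d\alpha$ at every point is exactly $\langle R\rangle$. Thus the differential of $\langle\mu,\xi\rangle$ vanishes at $x$ precisely when $\xi_M(x)\in\langle R(x)\rangle$, and $\Crit\mu$ is the locus $Z=\{x:\xi_M(x)\in\langle R(x)\rangle\ \text{for all }\xi\in\fg\}$ where the whole $G$-action is tangent to the Reeb foliation. One inclusion is then immediate: along a $1$-dimensional $G\times T$-orbit the orbit tangent space must equal $\langle R\rangle$, since it always contains the nonvanishing $R$, so every $\xi_M$ lies in $\langle R\rangle$ and the orbit sits inside $Z$.

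For the reverse inclusion I would exploit the regular-value hypothesis twice. First, it rules out $G$-fixed points: a fixed point would have $\xi_M=0$ for all $\xi$, hence $\mu=0$ and $d\mu=0$ there, contradicting that $0$ is regular. Second, since $\mu$ is $G\times T$-invariant and $d\mu\equiv 0$ on $Z$, the map $\mu$ is constant on each connected component $C_j$, with value $a_j\in\fg^\ast$; as $C_j\subseteq Z$ is disjoint from $\mu^{-1}(0)$ we get $a_j\neq 0$. Combining $\alpha(\xi_M)=\langle\mu,\xi\rangle=a_j(\xi)$ with $\xi_M\in\langle R\rangle$ forces $\xi_M=a_j(\xi)\,R$ identically on $C_j$. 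Choosing $\eta$ with $a_j(\eta)=1$ gives $\eta_M=R$ on $C_j$, so there the Reeb flow is the restriction of the one-parameter subgroup $\exp(t\eta)\subseteq G$; its stabilizer is the codimension-one subtorus $\ker a_j$ (it cannot be all of $G$, as there are no fixed points), so each $G$-orbit in $C_j$ is a circle on which $R$ is a nowhere-vanishing tangent field. Hence that circle is a closed Reeb orbit equal to the full $G\times T$-orbit, which is therefore $1$-dimensional. This identifies $\Crit\mu$ with the union of the $1$-dimensional $G\times T$-orbits.

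For the manifold structure and parity I note that $Z$ is closed, being cut out by the vanishing of $\xi_{i,M}\wedge R$ for a basis $\xi_1,\dots,\xi_s$ of $\fg$, and $G\times T$-invariant, since $G$ is abelian and commutes with the Reeb flow ($g_\ast R=R$, $g_\ast\xi_M=\xi_M$). To get even codimension, fix $C_j$ and let $\zeta_R\in\ft$ satisfy $(\zeta_R)_M=R$. Using the relations $\xi_M=a_j(\xi)R$ on $C_j$, the subtorus $\hat H\leq G\times T$ generated by $\ker a_j\subseteq\fg$ and the element $(\eta,-\zeta_R)$ fixes $C_j$ pointwise, while conversely $\mathrm{Fix}(\hat H)\subseteq Z$; so $C_j$ is exactly a connected component of $\mathrm{Fix}(\hat H)$. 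Averaging the contact metric over $G\times T$ produces an invariant compatible metric, hence a $G\times T$-invariant transverse complex structure $J$ on $\ker\alpha$ with $g'=d\alpha(\cdot,J\cdot)$. Since $\hat H$ preserves $J$ and acts on the normal bundle of $C_j$ (which lies in $\ker\alpha$, as $R$ is tangent to $C_j$) with no trivial subrepresentation, that normal bundle is a complex vector bundle, so its real rank, the codimension of $C_j$, is even.

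I expect the reverse inclusion to be the crux: the real content is that the regular-value hypothesis, through the constancy of $\mu$ on each component and the absence of fixed points, upgrades the a priori merely transversally-trivial $G$-action on $C_j$ to a genuine circle action whose orbits are closed Reeb circles — this is what collapses the $G\times T$-orbits to dimension one even when the ambient Reeb flow is irrational. The parity count is then routine, hinging only on arranging $J$ to be $G\times T$-invariant, which the averaging step secures.
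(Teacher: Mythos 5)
The paper offers no proof of this proposition (it is imported wholesale from \cite[Lemmata~7 and~9]{Casselmann2016}), so your attempt can only be compared with the natural argument, and in outline you have reproduced it: identify $\Crit\mu$ with the rank-zero locus $Z=\{x\mid d\mu_x=0\}=\{x\mid\widetilde\fg_x=\fg\}$ (this is indeed the convention the paper uses in \S\ref{subsec loc formula}; note that with the weaker ``non-surjective differential'' convention the statement would actually be \emph{false}, since points where only a single combination $\ev{\mu,\xi}$ is critical need not lie on $1$-dimensional orbits); use regularity of $0$ to force $\mu=a_j\neq 0$ and hence $\xi_M=a_j(\xi)R$ on each component, so that the Reeb flow there coincides with the action of a one-parameter subgroup $\exp(t\eta)\subseteq G$ and its orbits are the circular $G$-orbits, which are then closed Reeb orbits equal to the full $G\times T$-orbits; finally realize each component $C_j$ as a connected component of $\operatorname{Fix}(\hat H)$ for a subtorus $\hat H\leq G\times T$, so that it is a closed submanifold. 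Your verification of $C_j\subseteq\operatorname{Fix}(\hat H)\subseteq Z$ is correct. Two remarks on the last step: the even-codimension claim needs no invariant complex structure at all, since every real representation of a torus splits into trivial one-dimensional and nontrivial two-dimensional summands, so the normal bundle of a fixed-point component of $\hat H$ has even rank automatically; and your route to $J$ is imprecise as stated, because averaging a contact metric over $G\times T$ does not preserve $d\alpha$-compatibility --- one averages a metric and then applies the polar-decomposition construction to the symplectic bundle $(\ker\alpha,d\alpha)$.

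The one genuine gap is the sentence ``since $\mu$ is $G\times T$-invariant and $d\mu\equiv 0$ on $Z$, the map $\mu$ is constant on each connected component $C_j$.'' At that stage $Z$ is only a closed subset, not yet known to be a manifold, and pointwise vanishing of $d\mu$ on a closed set does not imply local constancy on it: it gives constancy along smooth paths lying in $Z$, but connected components of a closed set need not be smoothly path-connected ($G\times T$-invariance of $\mu$ is irrelevant here). Since everything downstream --- the definition of $a_j$, of $\eta$, and of $\hat H$ --- depends on this constancy, the step must be repaired, and it can be, in two standard ways. (i) Sard: for each $\xi\in\fg$ the scalar function $\ev{\mu,\xi}$ has every point of $Z$ as a critical point, so $\ev{\mu,\xi}(C_j)$ is a connected subset of its measure-zero set of critical values, hence a single point; running over a basis of $\fg$ gives $\mu(C_j)=\{a_j\}$. (ii) Finiteness of orbit types: for $x\in Z$ the identity $\xi_M(x)=\ev{\mu(x),\xi}R(x)$, together with $(\zeta_R)_M=R$, shows that $\mu(x)$ is determined by the $G\times T$-isotropy algebra at $x$; since $M$ is compact there are only finitely many such algebras, so $\mu|_Z$ takes finitely many values and is locally constant by continuity. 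With either insertion your proof is complete and is, as far as one can tell, the same argument as in the cited reference.
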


If $0$ is a regular value of $\mu$, the level set $\mu^{-1}(0)$ is a smooth $G\times T$-invariant submanifold of $M$, on which $G$ acts locally freely. 
We define the contact reduction $M_0 := \mu^{-1}(0) / G$, which is a contact orbifold (and an honest manifold if the action of $G$ on $\mu^{-1}(0)$ is free). 
Since $G$ and the Reeb flow commute and the Reeb orbits are transversal to the $G$-orbits along $\mu^{-1}(0)$, $\Omega(\mu^{-1}(0),\CF)$ is a $\fg$-dga of type (C) (cf.~\cite[Def. 2.3.4]{GS99}) and, hence, we have $H_G(\mu^{-1}(0), \CF) \cong H(\Omega(\mu^{-1}(0),\CF)_{\text{bas } \fg})$ (cf.~\cite[\S~5.1]{GS99} and \cite[Proof of Lemma 3.18]{GT2016equivariant}) via the Cartan map. 
This implies that we have an isomorphism $H_G(\mu^{-1}(0), \CF)\cong H(M_0, \CF_0)$, where 
the later denotes the cohomology of the $R$-basic differential forms on $M_0$. 
There is a natural map
\[ \kappa: H_G(M, \CF) \to H_G(\mu^{-1}(0), \CF) \cong H(M_0, \CF_0), \]
induced by the inclusion $\mu^{-1}(0)\subset M$, which we call the \emph{basic Kirwan map}, or simply the \emph{Kirwan map} when its meaning is clear from context.

\begin{theorem}[{\cite[Theorem~2]{Casselmann2016}}]\label{thm basic surj} If $0$ is a regular value of $\mu$, then the basic Kirwan map $H_G^\ast(M, \CF) \to H_G^\ast(\mu^{-1}(0), \CF)$ is surjective.
\end{theorem}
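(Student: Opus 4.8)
The plan is to adapt the Morse-theoretic proof of Kirwan surjectivity to equivariant basic cohomology, using the norm-square of the contact moment map as an equivariantly perfect Morse--Bott function. Fix a $G \times T$-invariant contact metric (such a metric exists since $G \times T$ is compact and preserves $\alpha$) and consider the function $\rho = |\mu|^2 : M \to \R$. Because $\alpha$ is Reeb-invariant and $G$ commutes with the Reeb flow, each $\ev{\mu, \xi} = \alpha(\xi_M)$ is constant along the leaves of $\CF$, so $\rho$ is a basic, $G$-invariant function; its minimum value $0$ is attained exactly on $\mu^{-1}(0)$, which is a smooth $G \times T$-invariant submanifold since $0$ is a regular value. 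First I would analyze $\Crit \rho$: using the identity $\iota_{\xi_M} d\alpha = -d\ev{\mu, \xi}$ coming from $L_{\xi_M}\alpha = 0$, one checks that the critical points of $\rho$ are the points $x$ at which the fundamental vector field $(\mu(x))_M$ vanishes. The critical set thus decomposes into the minimum $\mu^{-1}(0)$ together with higher components on which a nonzero $\beta = \mu(x) \in \fg$ has $\beta_M = 0$; these higher components lie in $\Crit\mu$, so by Proposition \ref{prop crit mu} they are closed, even-codimensional submanifolds supported on the $1$-dimensional $G \times T$-orbits.

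Next I would verify that $\rho$ is Morse--Bott transverse to $\CF$, i.e. that its Hessian is nondegenerate on the basic normal directions to each critical component $C_\beta$, and stratify $M$ by the negative gradient flow of $\rho$ into $G \times T$-invariant strata $S_\beta$, with the open stratum $S_0$ retracting onto $\mu^{-1}(0)$. The crux is to prove this stratification is \emph{equivariantly basic-perfect}: applying the long exact sequence of Proposition \ref{prop LES cohom} inductively to the pairs $(U_\beta, U_\beta \setminus S_\beta)$, where $U_\beta$ is a $G \times T$-invariant tubular neighborhood, one wants these sequences to split into short exact sequences. Via Proposition \ref{prop rel cohom iso comp supp} and the basic Thom isomorphism implicit in Lemma \ref{lemma-bundle-cohomology}, the connecting homomorphism is identified with multiplication by the equivariant basic Euler class $e_G(\nu^- S_\beta, \CF)$ of the negative normal bundle. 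It therefore suffices to show this class is not a zero divisor in $H_G(C_\beta, \CF)$, which follows because along $C_\beta$ the subtorus of $G$ generated by $\beta$ acts on $\nu^- S_\beta$ with weights pairing nontrivially with $\beta$, so that $e_G(\nu^- S_\beta, \CF)$ has a nonzero leading term in $S(\fg^\ast)$ — the basic analogue of the Atiyah--Bott lemma.

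Granting perfection, the restriction $H_G(M, \CF) \to H_G(S_0, \CF)$ is surjective; since the negative gradient flow of the $G \times T$-invariant $\rho$ gives a $G \times T$-equivariant, hence $G \times \{\phi_t\}$-equivariant, deformation retraction of $S_0$ onto $\mu^{-1}(0)$, Lemma \ref{lemma if homotopy equiv then cohom isom} identifies $H_G(S_0, \CF)$ with $H_G(\mu^{-1}(0), \CF)$ compatibly with the inclusion-induced map, yielding surjectivity of the Kirwan map. The main obstacle throughout is that the symmetry group $G \times \{\phi_t\}$ is noncompact, so that invariant gradient flows, tubular neighborhoods, and the Thom isomorphism are not directly available; the remedy, as elsewhere in the paper, is to perform every topological construction $G \times T$-equivariantly, using the compact closure $T$ of the Reeb flow, and then transfer the conclusions back to basic cohomology via Lemma \ref{lemma if homotopy equiv then cohom isom}. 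A secondary point is ensuring that the negative normal bundles and their Euler classes are genuinely basic, which relies on the critical components having closed Reeb orbits — guaranteed here because, by Proposition \ref{prop crit mu}, they lie in the $1$-dimensional $G \times T$-orbits.
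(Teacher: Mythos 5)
Your overall strategy --- Kirwan-style Morse theory for $\rho = |\mu|^2$, equivariant basic perfection of the gradient-flow stratification via an Atiyah--Bott non-zero-divisor lemma for the equivariant basic Euler classes, with every topological construction performed $G\times T$-equivariantly to circumvent the noncompactness of $G\times\{\phi_t\}$ --- is exactly the strategy of the proof this paper points to: the statement is not proved here at all but quoted from \cite[Theorem~2]{Casselmann2016}, which proceeds along precisely these lines. However, your execution has two genuine gaps, both occurring exactly where the contact setting differs from the symplectic one.

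First, your identification of $\Crit\rho$ is wrong. From $d\ev{\mu,\xi} = -\iota_{\xi_M}d\alpha$ one gets $d\rho_x = -2\,\iota_{(\mu(x))_M}d\alpha|_x$, but in the contact setting $d\alpha$ is \emph{not} nondegenerate: its kernel is the Reeb line $\R R(x)$. Hence $x$ is critical if and only if $(\mu(x))_M(x)\in \R R(x)$, and pairing with $\alpha$ shows that then $(\mu(x))_M(x) = |\mu(x)|^2\, R(x)$. So on every critical component outside $\mu^{-1}(0)$ the field $(\mu(x))_M$ is a \emph{nonvanishing} multiple of $R$, never zero as you assert. Consequently your next claim also fails: the higher critical components are not contained in $\Crit\mu$ (the union of the $1$-dimensional $G\times T$-orbits, where \emph{all} of $\fg$ maps into $\R R$); they only satisfy $\mu(x)\in\widetilde\fg_x$, i.e.\ they lie in generalized fixed-point sets of the single subtorus generated by $\beta=\mu(x)$, which are in general of much larger dimension. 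This error propagates into your Atiyah--Bott step (the weights must be taken for that subtorus relative to the generalized isotropy $\widetilde\fg$, not for $G$ at honest fixed points) and invalidates your closing justification that the negative normal bundles are basic because the critical components are $1$-dimensional orbits. Second, you propose to ``verify that $\rho$ is Morse--Bott transverse to $\CF$''; this cannot be verified, because it is false in general already in the symplectic case. Kirwan's argument requires her weaker notion of a \emph{minimally degenerate} function and the accompanying smooth stratification theory, and the cited proof must adapt that machinery to the basic setting. Assuming Morse--Bott nondegeneracy is a missing idea, not a routine check.
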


We will need a local normal form of the contact moment map in a neighborhood of $\mu^{-1}(0)$. In order to obtain it, we need to show the uniqueness of certain coisotropic embeddings into contact manifolds. 
To this end, we first prove an equivariant contact Darboux Theorem for submanifolds. 
Note that while a contact Darboux Theorem for contact forms in a neighborhood of a point (see, e.g., \cite[Theorem~2.24]{geiges2006}) is well-known, a contact Darboux Theorem for neighborhoods of submanifolds exists, to our knowledge, so far only for contact structures (\cite[Theorem 3.6]{lerman2002contact}) or submanifolds to which the Reeb vector fields are nowhere tangent (\cite[Theorem~B]{arnold4symplectic}). 
We follow Lerman's approach for contact structures. Note that his proof does not generally work for contact \emph{forms} because his function $g_t$ (which is $\varphi_t^*(\dot \alpha_t(R_t))$ in the notation of the upcoming proof) might not vanish. It is, however, applicable in our case, because we make the additional assumption that the Reeb vector fields coincide on a neighborhood of the submanifold.
\begin{theorem}[Equivariant contact Darboux Theorem]\label{thm Darboux}
	Let $Y$ be a closed submanifold of $X$ and let $\alpha^0$ and $\alpha^1$ be two contact forms on $X$ with Reeb vector fields $R_i$, $i=0,1$. Suppose that $\alpha^0_x = \alpha^1_x$ and $d\alpha^0_x = d\alpha^1_x$ for every $x \in Y$ and that there is a neighborhood $U$ of $Y$ in $X$ such that $R_0=R_1$ on $U$. Then there exist neighborhoods $U_0, U_1$ of $Y$ in $X$ and a diffeomorphism $\varphi: U_0 \rightarrow U_1$ such that $\varphi|_Y = \operatorname{id}|_Y$ and $\varphi^*\alpha^1 = \alpha^0$. 
	
	Moreover, if a compact Lie group $K$ acts on $X$, preserving $Y$, $U$, and the two contact forms $\alpha^0, \alpha^1$, then we can choose $U_0$ and $U_1$ $K$-invariant and $\varphi$ $K$-equivariant.
\end{theorem}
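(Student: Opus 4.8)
The plan is to run a Moser-type deformation argument. Set $\beta := \alpha^1 - \alpha^0$ and interpolate linearly, $\alpha_t := \alpha^0 + t\beta = (1-t)\alpha^0 + t\alpha^1$ for $t \in [0,1]$. By hypothesis $\alpha^0_x = \alpha^1_x$ and $d\alpha^0_x = d\alpha^1_x$ for $x \in Y$, so both $\beta$ and $d\beta$ vanish along $Y$; in particular $\alpha_t|_Y = \alpha^0|_Y$ and $(d\alpha_t)|_Y = (d\alpha^0)|_Y$, so $\alpha_t \wedge (d\alpha_t)^n$ is nonvanishing along the compact set $Y$. By continuity there is a neighborhood of $Y$ on which $\alpha_t$ is a contact form for every $t \in [0,1]$ simultaneously; intersecting with $U$ and shrinking, I work on such a neighborhood.

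The key observation, and the point at which the hypothesis $R_0 = R_1 =: R$ on $U$ enters, is that $R$ is the common Reeb field of every $\alpha_t$: indeed $\iota_R \alpha_t = (1-t)\iota_R\alpha^0 + t\,\iota_R\alpha^1 = 1$ and $\iota_R d\alpha_t = (1-t)\iota_R d\alpha^0 + t\,\iota_R d\alpha^1 = 0$. Consequently $\iota_R\beta = \iota_R\alpha^1 - \iota_R\alpha^0 = 0$, so $\beta$ lies in the annihilator of $R$. This is exactly the vanishing that allows one to take the conformal factor equal to $1$: the function Lerman must correct for is $\beta(R_t)$, which here is identically zero.

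I then seek a time-dependent vector field $X_t$ whose flow $\varphi_t$ satisfies $\varphi_t^*\alpha_t = \alpha^0$ and $\varphi_0 = \operatorname{id}$. Differentiating, this is equivalent to the Moser equation $L_{X_t}\alpha_t + \beta = 0$. I look for a solution inside the contact hyperplane, i.e. with $\iota_{X_t}\alpha_t = 0$; then Cartan's formula $L_{X_t}\alpha_t = d(\iota_{X_t}\alpha_t) + \iota_{X_t}d\alpha_t$ reduces the equation to $\iota_{X_t}d\alpha_t = -\beta$. Since $d\alpha_t$ restricts to a nondegenerate form on $\ker\alpha_t$, the map $Z \mapsto \iota_Z d\alpha_t$ is an isomorphism from $\ker\alpha_t$ onto the annihilator of $R$ (for $Z \in \ker\alpha_t$ one has $(\iota_Z d\alpha_t)(R) = -(\iota_R d\alpha_t)(Z) = 0$, and the dimensions match); as $-\beta$ lies in that annihilator there is a unique $X_t \in \ker\alpha_t$ solving the equation. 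Because $\beta$ vanishes along $Y$, so does $X_t$.

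Finally I integrate. As $X_t$ vanishes on the compact set $Y$, its time-dependent flow is defined for all $t \in [0,1]$ on some neighborhood $U_0$ of $Y$, fixes $Y$ pointwise, and $\varphi := \varphi_1 \colon U_0 \to U_1 := \varphi_1(U_0)$ satisfies $\varphi^*\alpha^1 = \alpha^0$ by construction. For the equivariant statement, $\alpha_t$, $\beta$ and $R$ are all $K$-invariant, so the defining equations for $X_t$ are $K$-invariant; by uniqueness $X_t$ is then $K$-invariant, hence its flow is $K$-equivariant, and by compactness of $K$ the neighborhoods $U_0, U_1$ can be taken $K$-invariant. I expect the steps requiring the most care to be the uniform-in-$t$ contact condition near $Y$ (using compactness of $Y$) and the existence of the flow up to time $1$; the conceptual crux, however, is the identity $\iota_R\beta = 0$ coming from the shared Reeb field, without which the conformal correction term would obstruct obtaining $\varphi^*\alpha^1 = \alpha^0$ on the nose.
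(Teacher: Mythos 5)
Your proposal is correct and takes essentially the same route as the paper: linear interpolation $\alpha_t$, the observation that on $U$ the shared Reeb field is the Reeb field of every $\alpha_t$ (so $\iota_R\beta=0$ and the correction term $\dot\alpha_t(R_t)$ obstructing Lerman's contact-structure argument vanishes), a Moser vector field tangent to $\ker\alpha_t$ vanishing on $Y$, and equivariance of $X_t$ and its flow by invariance plus uniqueness. The only cosmetic differences are that you solve the Moser equation $\iota_{X_t}d\alpha_t=-\beta$ exactly on a neighborhood inside $U$, whereas the paper defines $X_t=\left(d\alpha_t|_{\xi_t}\right)^{-1}(-\dot\alpha_t|_{\xi_t})$ and then shows the resulting term $\dot\alpha_t(R_t)\alpha_t$ vanishes on $U$, and that your two appeals to compactness of $Y$ (not among the hypotheses) are inessential, since compactness of $[0,1]$ suffices there, as in the paper's argument.
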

\begin{proof}
	Consider the family of 1-forms $\alpha^t := t \alpha^1 + (1-t)\alpha^0$, $t \in [0,1]$. For every $x \in Y$ and every $t \in [0,1]$, we have $\alpha^t_x = \alpha^1_x=\alpha^0_x$ and $d \alpha^t_x = d \alpha^1_x=d\alpha^0_x$. 
	It follows that $\alpha^t$ are contact forms in a neighborhood of $Y$ for every $t \in [0,1]$: 
	By maximality of the degree, there is a smooth function $f: X \times [0,1] \to \mathbb{R}$ such that $\alpha_t \wedge (d\alpha_t)^n = f\alpha_0 \wedge (d\alpha_0)^n$. $f^{-1}(\mathbb{R}\setminus \{0\})$ is 
	open and contains $Y\times [0,1]$, so for every $(x,t)\in Y \times [0,1]$, there exists a neighborhood $U(x,t)$ of the form $U_t(x) \times (t-\epsilon_{x,t}, t+ \epsilon_{x,t})\cap [0,1]$, $\epsilon_{x,t}>0$ such that $f|_{U(x,t)}\neq 0$. 
	Since $[0,1]$ is compact, there are $t_1, ..., t_N$: $[0,1] = \cup_{i=1}^N(t_i-\epsilon_{x,t_i}, t_i+ \epsilon_{x,t_i})\cap [0,1]$. 
	Then $\widetilde U :=\cup_{x\in Y}\left( \cap_{i=1}^N U_{t_i}(x)\right)$ is open, contains $Y$ and  $f$ does not vanish on $\widetilde U \times [0,1]$. 
	Thus, all $\alpha_t$ are contact forms on $\widetilde U$. W.l.o.g., we assume that they are contact forms at least on all of $U$. $\alpha^t$ are $K$-invariant because $\alpha^0$ and $\alpha^1$ are. Let $R_t$ denote the Reeb vector field of $\alpha^t$. Since $R_t$ is uniquely determined, 
	$R_t$ is also $K$-invariant and, on $U$, we have $R_t=R_0$. 
	Set 
	\[\dot \alpha_t := \frac{d}{dt}\alpha^t=\alpha_1-\alpha_0.\]
	$\dot \alpha_t$ vanishes on $Y$ and, on $U$, it is $\dot \alpha_t(R_0)=0$. 
	Define a $K$-invariant time dependent vector field $X_t$ tangent to the contact distribution $\xi_t:=\ker \alpha_t$ and vanishing on $Y$ by
	\[ X_t := \left( d \alpha_t|_{\xi_t}\right)^{-1}(-\dot \alpha_t|_{\xi_t}). \]
	Then we have $(\iota_{X_t}d\alpha_t)|_{\xi_t} = -\dot \alpha_t|_{\xi_t}=(\dot \alpha_t(R_t)\alpha_t - \dot \alpha_t)|_{\xi_t}$ and $(\iota_{X_t}d\alpha_t)(R_t)=0=(\dot \alpha_t(R_t)\alpha_t - \dot \alpha_t)(R_t)$. Hence, since $X_t \in \xi_t$, 
	\[L_{X_t}\alpha_t =\iota_{X_t}d\alpha_t = \dot \alpha_t(R_t)\alpha_t - \dot \alpha_t .\]
	Denote the time dependent flow of $X_t$ by $\varphi_t$. $\varphi_t$ is defined on a neighborhood $V$ of $Y$ since $X_t$ vanishes on $Y$, $K$-invariant because $X_t$ is $K$-invariant, and $\varphi_t|_Y=\mathrm{id}_Y$. Then 
	\[\frac{d}{dt}(\varphi_t^*\alpha_t)=\varphi_t^*(L_{X_t}\alpha_t + \dot \alpha_t) = \varphi_t^*(\dot \alpha_t(R_t)\alpha_t).\]
	On $U$, $0=\dot \alpha_t(R_0)=\dot \alpha_t(R_t)$. 
	We will find a small neighborhood $U_0$ of $Y$ with $\varphi_t(U_0) \subset U$ for every $t$, then we have $\frac{d}{dt}(\varphi_t^*\alpha_t)=0$ on $U_0$ and, hence, $\varphi_t^*\alpha_t\equiv \varphi_0^*\alpha_0 = \alpha_0$. $\varphi_1 :U_0 \rightarrow \varphi_1(U_0)=:U_1$ hence defines the desired $K$-invariant contactomorphism. 
	To find $U_0$, note that for every $(x,t)\in Y\times [0,1]$, there exists a neighborhood $U(x,t)$ of the form $U_t(x) \times (t-\epsilon_{x,t}, t+ \epsilon_{x,t})\cap [0,1]$, $\epsilon_{x,t}>0$ such that $\varphi (U(x,t)) \subset U$. 
	Since $[0,1]$ is compact, there are $t_1, ..., t_N$: $[0,1] $ $= \cup_{i=1}^N(t_i-\epsilon_{x,t_i}, t_i+ \epsilon_{x,t_i})\cap [0,1]$. 
	Then $U_0 :=\cup_{x\in Y}\left( \cap_{i=1}^N U_{t_i}(x)\right)$ is open, contains $Y$ and $\varphi( U_0 \times [0,1])\subset U$.
\end{proof}

\begin{theorem}[Contact Coisotropic Embedding Theorem]\label{theorem contact coiso emb}
	Let $\alpha$ be a 1-form on a manifold $Z$ such that $d \alpha$ is of constant rank. Suppose that a compact Lie group $K$ acts on $Z$, leaving $\alpha$ invariant. Suppose that there are two 
	contact $K$-manifolds $(X_1, \alpha_1), (X_2, \alpha_2)$ and $K$-equivariant embeddings $i_j: Z \rightarrow X_j$ such that
	\begin{enumerate}[(i)]
		\item $di_j(TZ) \cap \ker \alpha_j$ is coisotropic in $(\ker \alpha_j, d\alpha_j|_{\ker \alpha_j})$,
		\item $i_j^* \alpha_j = \alpha$ and $K$ preserves $\alpha_j$,
		\item there is a nowhere vanishing $K$-fundamental vector field $X_Z$ on $Z$, generated by $X\in \mathfrak{k}$, such that $di_j(X_Z)= R_j$, where $R_j$ denotes the Reeb vector field on $X_j$, and $R_j$ is the fundamental vector field generated by $X$ on all of $X_j$.   
		(In particular, the Reeb flow corresponds to the action of a subgroup of $K$ on $X_j$).
	\end{enumerate}
	Then there exist $K$-invariant neighborhoods $U_j$ of $i_j(Z)$ in $X_j$ and a $K$-equivariant diffeomorphism $\varphi : U_1 \rightarrow U_2$ such that $\varphi ^* \alpha_2 = \alpha_1$ and $i_2 = \varphi \circ i_1$.
\end{theorem}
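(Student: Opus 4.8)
\emph{Strategy.} The plan is to reduce the statement to the equivariant contact Darboux Theorem (Theorem \ref{thm Darboux}) by a Weinstein-type argument for coisotropic submanifolds. Concretely, I would first identify the normal bundles of $i_1(Z)$ and $i_2(Z)$ $K$-equivariantly, using the coisotropy hypothesis (i); then use this identification to build a $K$-equivariant diffeomorphism $\psi_0$ between neighborhoods of $i_1(Z)$ and $i_2(Z)$ with $\psi_0 \circ i_1 = i_2$ such that $\psi_0^*\alpha_2$ agrees with $\alpha_1$ \emph{to first order} along $i_1(Z)$, i.e. the two forms and their exterior derivatives coincide at every point of $i_1(Z)$. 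The desired $\varphi$ is then obtained by correcting $\psi_0$ with the diffeomorphism furnished by Theorem \ref{thm Darboux}.

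\emph{Normal bundles.} Writing $W_j := di_j(TZ)$ and using $T_pX_j = \ker(\alpha_j)_p \oplus \langle R_j \rangle$, condition (iii) gives $R_j = di_j(X_Z) \in W_j$, and since $i_j^*\alpha_j = \alpha$ a vector $di_j(v)$ lies in $\ker\alpha_j$ iff $v \in H := \ker\alpha \subseteq TZ$ (note $\alpha(X_Z)=\alpha_j(R_j)=1$, so $H$ has corank one). Hence $D_j := W_j \cap \ker\alpha_j = di_j(H)$ is the \emph{same} subbundle of $TZ$ pushed forward by either embedding, and $\nu_j := TX_j|_{i_j(Z)}/W_j \cong \ker\alpha_j / D_j$. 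By (i), $D_j$ is coisotropic in $(\ker\alpha_j, d\alpha_j)$, so $d\alpha_j$ induces an isomorphism $\ker\alpha_j/D_j \cong (D_j^{d\alpha_j})^*$, and since $i_j^*d\alpha_j = d\alpha$ one computes $D_j^{d\alpha_j} = di_j(N)$ with $N := \ker(d\alpha|_H) \subseteq H$. Thus $\nu_1 \cong N^* \cong \nu_2$ $K$-equivariantly, the isomorphism being intrinsic to $(Z,\alpha)$; in particular $\dim X_1 = \dim Z + \operatorname{rank} N^* = \dim X_2$.

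\emph{First-order matching.} The main work is a fibrewise linear extension: I would show that the identification $\tau := di_2 \circ (di_1)^{-1} : W_1 \to W_2$, which satisfies $\tau R_1 = R_2$ and intertwines the restrictions of $\alpha_j$ and $d\alpha_j$, extends to a bundle isomorphism $\Lambda : TX_1|_{i_1(Z)} \to TX_2|_{i_2(Z)}$ covering $i_2\circ i_1^{-1}$ with $\Lambda R_1 = R_2$, $\Lambda^*\alpha_2 = \alpha_1$ and $\Lambda^*d\alpha_2 = d\alpha_1$. Pointwise this is the linear symplectic coisotropic extension lemma applied to the symplectic spaces $(\ker\alpha_j, d\alpha_j)$ and the coisotropic subspaces $D_j$, the equality of dimensions being guaranteed by the normal-bundle computation. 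To make $\Lambda$ smooth and $K$-equivariant I would fix $K$-invariant $d\alpha_j$-compatible almost complex structures on $\ker\alpha_j$ (obtained by averaging over the compact group $K$) and carry out the extension canonically with respect to the resulting invariant metrics; realizing $\Lambda$ as the differential of a diffeomorphism through $K$-invariant tubular neighborhoods (invariant exponential maps) then yields a $K$-equivariant $\psi_0$ with $\psi_0\circ i_1 = i_2$ and $d\psi_0 = \Lambda$ along $i_1(Z)$, so that $\alpha^1 := \psi_0^*\alpha_2$ and $\alpha^0 := \alpha_1$ agree to first order along $Y := i_1(Z)$.

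\emph{Reeb matching and conclusion.} The linchpin, and the reason hypothesis (iii) is imposed, is that Theorem \ref{thm Darboux} additionally requires the two Reeb vector fields to coincide on a full neighborhood of $Y$, not merely along $Y$. This holds automatically here: the Reeb field of $\psi_0^*\alpha_2$ is $(\psi_0^{-1})_*R_2$, and since $R_2$ is the fundamental vector field of $X \in \fk$ on $X_2$ and $\psi_0$ is $K$-equivariant, $(\psi_0^{-1})_*R_2$ is the fundamental vector field of $X$ on $X_1$, which by (iii) equals $R_1$ — the Reeb field of $\alpha^0$. Applying Theorem \ref{thm Darboux} with the compact group $K$, the submanifold $Y$, and the forms $\alpha^0, \alpha^1$ produces a $K$-equivariant diffeomorphism $\varphi_1$ of neighborhoods of $Y$ with $\varphi_1|_Y = \mathrm{id}_Y$ and $\varphi_1^*\psi_0^*\alpha_2 = \alpha_1$. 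Then $\varphi := \psi_0 \circ \varphi_1$ is $K$-equivariant, satisfies $\varphi^*\alpha_2 = \alpha_1$, and $\varphi\circ i_1 = \psi_0 \circ i_1 = i_2$, as required. The principal obstacle is the smooth, $K$-equivariant coisotropic extension of the previous paragraph; by contrast, once $\psi_0$ is equivariant the Reeb hypothesis of the Darboux step comes for free from (iii).
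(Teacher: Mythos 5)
Your proposal is correct and follows essentially the same route as the paper's proof: identify the normal bundles via the coisotropy condition and $d\alpha_j$ (the paper's Lemma \ref{lem vb iso normal bundle}), build a $K$-equivariant tubular-neighborhood diffeomorphism $\psi_0$ (the paper's $\tilde A$) whose pullback of $\alpha_2$ agrees with $\alpha_1$ together with its differential along $i_1(Z)$, use hypothesis (iii) plus equivariance to get the Reeb fields to agree on a full neighborhood, and conclude with Theorem \ref{thm Darboux}. Your ``linear symplectic coisotropic extension'' step is exactly the point the paper disposes of with ``which is seen as in the symplectic case,'' so the two arguments coincide in substance.
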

	To prove this Theorem, we adjust the proof of the well-known Coisotropic Embedding Theorem for symplectic manifolds (see, e.g., \cite[Theorem 39.2]{GS84}) to the contact setting and extend it in order to obtain an equality of contact forms, not only of their differentials. 
	The following notation is used. $\xi_j := \ker \alpha_j$, $\zeta_j := di_j(TZ) \cap \ker \xi_j$, $\omega_j := d\alpha_j |_{\xi_j}$, $\perp:= \perp_{d\alpha}$, $\perp_j:= \perp_{\omega_j}$. 
	Note that by our assumptions, $\zeta_j$ is $K$-invariant and $\R R_j \subset di_j(TZ)$ and, hence, $di_j(TZ) = \zeta_j \oplus \R R_j$.

\begin{lemma}\label{lem vb iso normal bundle}
	$N_j := TX_j/di_j(TZ) \ \simeq \ (TZ^\perp/\R X_Z)^*$ as $K$-vector bundles over $Z$.
\end{lemma}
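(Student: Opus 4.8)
My plan is to exhibit the isomorphism as a composite of three $K$-equivariant bundle isomorphisms over $Z$, using at each stage only the invariant data $\alpha_j, d\alpha_j, R_j, X_Z$ together with the coisotropy hypothesis (i). The guiding observation is that the Reeb direction is already contained in $di_j(TZ)$, so that $N_j$ really lives inside the contact distribution, and that coisotropic reduction converts such a quotient into the dual of a symplectic orthogonal complement.

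First I would reduce $N_j$ to $\xi_j/\zeta_j$. Since $\alpha_j(R_j) = 1$ we have $R_j \notin \xi_j$, while $\R R_j \subset di_j(TZ)$ by hypothesis (iii); combined with the splitting $di_j(TZ) = \zeta_j \oplus \R R_j$ recorded above, this shows that restricting the projection $TX_j|_Z \to N_j$ to $\xi_j$ is a $K$-equivariant bundle map with kernel $\xi_j \cap di_j(TZ) = \zeta_j$. The constant-rank hypothesis on $d\alpha$ and the contact condition guarantee that $\xi_j$, $\zeta_j$, and hence $N_j$ are smooth subbundles of constant rank, so a fibre-dimension count gives surjectivity and thus $N_j \cong \xi_j/\zeta_j$.

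Next I would perform fibrewise coisotropic reduction in $(\xi_j, \omega_j)$. By (i) the subbundle $\zeta_j$ is coisotropic, so $\zeta_j^{\perp_j} \subseteq \zeta_j$ and $(\zeta_j^{\perp_j})^{\perp_j} = \zeta_j$. The $K$-invariant form $\omega_j$ then defines the map $\xi_j \to (\zeta_j^{\perp_j})^*$, $v \mapsto \omega_j(v,\cdot)|_{\zeta_j^{\perp_j}}$, whose kernel is $(\zeta_j^{\perp_j})^{\perp_j} = \zeta_j$; since $\dim \xi_j - \dim \zeta_j = \dim \zeta_j^{\perp_j}$ by coisotropy, it descends to a $K$-equivariant isomorphism $\xi_j/\zeta_j \cong (\zeta_j^{\perp_j})^*$. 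Finally I would identify $\zeta_j^{\perp_j}$ with $TZ^\perp/\R X_Z$: because $i_j^*\alpha_j = \alpha$ we have $i_j^* d\alpha_j = d\alpha$, so $di_j$ maps the radical $TZ^\perp$ of $d\alpha$ isomorphically onto the radical of $d\alpha_j|_{di_j(TZ)}$; writing $di_j(TZ) = \R R_j \oplus \zeta_j$ and using $\iota_{R_j} d\alpha_j = 0$ shows this radical is exactly $\R R_j \oplus \zeta_j^{\perp_j}$. As $di_j(X_Z) = R_j$, quotienting by the Reeb line yields a $K$-equivariant isomorphism $TZ^\perp/\R X_Z \cong \zeta_j^{\perp_j}$, and dualizing and composing the three maps gives $N_j \cong (\zeta_j^{\perp_j})^* \cong (TZ^\perp/\R X_Z)^*$.

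I expect the substance of the argument to be entirely linear-algebraic and pointwise; genuine care is needed only in two places. First, one must verify the constant-rank claims so that every distribution ($\zeta_j$, $\zeta_j^{\perp_j}$, $TZ^\perp$) is an honest smooth subbundle and the pointwise isomorphisms assemble into bundle maps — this is precisely where the constant-rank assumption on $d\alpha$ and the contact condition on the $\alpha_j$ enter. Second, the two reduction steps rely on the double-orthogonal identity $(\zeta_j^{\perp_j})^{\perp_j} = \zeta_j$, valid because $\zeta_j$ is coisotropic. $K$-equivariance is automatic throughout, since all the tensors and vector fields involved are $K$-invariant and the embeddings $i_j$ are $K$-equivariant.
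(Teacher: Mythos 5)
Your proof is correct and is in substance the same as the paper's: both arguments come down to the pairing $v \mapsto d\alpha_j(v,\cdot)$ against (the image of) $TZ^\perp$ modulo the Reeb line, with coisotropy of $\zeta_j$ giving injectivity and a rank count giving surjectivity, and with $di_j$ identifying $TZ^\perp/\R X_Z$ with its image. The only difference is presentational: the paper defines this map in one stroke, $N_j \to (di_j(TZ^\perp)/\R R_j)^*$, whereas you factor it through $N_j \cong \xi_j/\zeta_j \cong (\zeta_j^{\perp_j})^*$ together with the identification $\zeta_j^{\perp_j} \cong di_j(TZ^\perp)/\R R_j$, which composes to the identical isomorphism.
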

\begin{proof}
	Consider the maps 
	\[\begin{array}{rrcl}
		\varphi_j:& TX_j/di_j(TZ) &\to &(di_j(TZ^\perp)/\R R_j)^*\\
		&[v] &\mapsto &d\alpha_j(v, \cdot)|_{di_j(TZ^\perp)/\R R_j}.
	\end{array}\]
	Since $R_j \in \ker d \alpha_j$ and $di_j(TZ)\perp_{d\alpha_j} di_j(TZ^\perp)$, the map $\varphi_j$ is well-defined. By assumption, $di_j(TZ)\cap \xi_j$ is coisotropic. It follows that $di_j(TZ^\perp)^{\perp_{d\alpha_j}} \subseteq di_j(TZ)$. This, however, yields that $\varphi_j$ is injective. For dimensional reasons, $\varphi_j$ then has to be surjective, as well. Since $i_j$ is an equivariant embedding, we have $K$-equivariant isomorphisms $TZ^\perp/\R X_Z \simeq di_j(TZ^\perp)/\R R_j$.
\end{proof}
\begin{proof}[Proof of the Embedding Theorem]
	Realize $N_j$ as a $K$-invariant complement of $di_j(TZ)$ in $TX_j$ such that $\xi_j = \zeta_j \oplus N_j$. This is possible since $\R R_j \subset di_j(TZ)$. 
	By Lemma \ref{lem vb iso normal bundle}, we have a canonical $K$-equivariant vector bundle isomorphism $A: N_1 \rightarrow N_2$. Then for $v \in N_1$, $Av \in N_2$ is defined via 
	\[ \omega_1(v, di_1(w))=\omega_2(Av, di_2(w)) \text{ for every } di_j(w) \in di_j(TZ^\perp)\cap \zeta_j.\]	
	(A neighborhood of the zero section of) $N_j$ can be identified with a $K$-invariant tubular neighborhood $U_j$ of $i_j(Z)$ in $X_j$ via the exponential maps of $K$-invariant Riemannian metrics, where $Z$ embeds as the zero section. Then $A$ yields a $K$-equivariant diffeomorphism $\tilde A: U_1 \rightarrow U_2$ with $i_2=\tilde A \circ i_1$. 
	Set $\tilde \alpha_1 := \tilde A^*\alpha_2$. 
	Then $\tilde \alpha_1$ is a contact form on $U_1$. We want to apply Theorem \ref{thm Darboux}. $i_2=\tilde A \circ i_1$ implies that $i_1^*\alpha_1 = \alpha = i_2^*\alpha_2 = i_1^* \tilde \alpha_1$. 
	Hence, we have $(\tilde \alpha_1)_{i_1(z)}|_{di_1(TZ)}=(\alpha_1)_{i_1(z)}|_{di_1(TZ)}$. Furthermore, we have $d \tilde A |_{N_1}=A$ by construction, so $d\tilde A|_{N_1}:N_1 \subset \xi_1 \rightarrow N_2 \subset \xi_2$, which yields $(\tilde \alpha_1)_{i_1(z)}|_{\xi_1}=0=(\alpha_1)_{i_1(z)}|_{\xi_1}$. 
	Thus,  $(\tilde \alpha_1)_{i_1(z)}=(\alpha_1)_{i_1(z)}$ on all of $TX_1$. Since the Reeb vector fields are fundamental vector fields of the same element of $\mathfrak{k}$ and since $\tilde A $ is $K$-invariant, $d\tilde A(R_1(p))=R_2(\tilde A(p))$. 
	It follows that $\tilde \alpha_1(R_1)=1$ and $\iota_{R_1} d\tilde \alpha_1=0$, so $R_1$ is the Reeb vector field of $\tilde \alpha_1 $ on $U_1$. It remains to show that $(d\tilde \alpha_1)_{i_1(z)}=(d\alpha_1)_{i_1(z)}$ on $\xi_1 \times \xi_1$, which is seen as in the symplectic case.
	By Theorem \ref{thm Darboux}, there is a neighborhood $U$ of $i_1(Z)$ and a $K$-equivariant diffeomorphism $g$ of $U$ into $X_1$ s.t. $g|_{i_1(Z)}=\mathrm{id}_{i_1(Z)}$ and $g^*\tilde \alpha_1 = \alpha_1$. 
	Then $\varphi := \tilde A \circ g$, restricted to a small enough neighborhood, satisfies $\varphi^* \alpha_2 = \alpha_1$.
\end{proof}

\begin{lemma}\label{lem mu - M coisotropic}
	Suppose that 0 is a regular value of the contact moment map. Then the natural embedding $\mu^{-1}(0) \hookrightarrow M$ 
	satisfies \it{(i)-(iii)} of Theorem \ref{theorem contact coiso emb} with $K=G\times T$.
\end{lemma}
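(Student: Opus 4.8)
The plan is to set $Z := \mu^{-1}(0)$ with the restricted form $\alpha := i^\ast\alpha$ (abusing notation for the contact form of $M$), $K := G\times T$, and $X := M$, and to verify (i)--(iii) pointwise, together with the standing constant-rank hypothesis on $(Z,\alpha)$. The one computational tool I would use repeatedly is the identity $d\ev{\mu,\xi} = -\iota_{\xi_M}d\alpha$, which follows from $0 = L_{\xi_M}\alpha = d\iota_{\xi_M}\alpha + \iota_{\xi_M}d\alpha$ and $\ev{\mu,\xi} = \iota_{\xi_M}\alpha$. In particular $T_pZ = \ker d\mu_p = (\fg\cdot p)^{\perp}$, the $d\alpha$-orthogonal complement in $T_pM$ of the orbit direction $\fg\cdot p := \{\xi_M(p) : \xi\in\fg\}$.

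The two soft conditions first. For (ii), $i^\ast\alpha = \alpha$ holds by definition, $G$ preserves $\alpha$ by hypothesis, and $T$ preserves $\alpha$ because $L_R\alpha = 0$ makes the Reeb flow $\{\phi_t\}$ act by $\alpha$-preserving diffeomorphisms, a closed condition inherited by $T = \overline{\{\phi_t\}}$. For (iii), I would take $X\in\ft\subset\fk$ to be the topological generator of $T$ with $X_M = R$; since $K = G\times T$ preserves $Z$, its fundamental vector field $X_Z$ is tangent to $Z$, so $di(X_Z) = R$ is nowhere vanishing, and $R$ is by construction the fundamental vector field of $X$ on all of $M$.

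The heart of the argument is (i). First I would record that $\fg\cdot p\subseteq\ker\alpha_p =: W$ for $p\in Z$, since $\alpha(\xi_M) = \ev{\mu,\xi} = 0$ on $\mu^{-1}(0)$. Decomposing $T_pM = W\oplus\ev{R}$ and using $R\in\ker d\alpha$, one checks the linear-algebra fact $T_pZ\cap W = (\fg\cdot p)^{\perp_W}$, the orthogonal complement taken inside the symplectic space $(W, d\alpha|_W)$. By reflexivity of $\perp_W$ in a nondegenerate form, $(T_pZ\cap W)^{\perp_W} = \fg\cdot p$, so $T_pZ\cap W$ is coisotropic in $W$ precisely when $\fg\cdot p$ is isotropic in $W$. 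The latter holds everywhere, not just on $Z$, by the computation
\[ d\alpha(\xi_M,\eta_M) = \xi_M\ev{\mu,\eta} - \eta_M\ev{\mu,\xi} = 2\,d\alpha(\xi_M,\eta_M), \]
where the first equality is Cartan's formula together with $[\xi_M,\eta_M] = -[\xi,\eta]_M = 0$ (as $G$ is a torus), and the second rewrites each term via $d\ev{\mu,\cdot} = -\iota_{(\cdot)_M}d\alpha$. This forces $d\alpha(\xi_M,\eta_M) = 0$, giving (i).

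Finally, for the constant-rank hypothesis on $d\alpha = i^\ast d\alpha$, the same decomposition yields $\ker(d\alpha|_{T_pZ}) = T_pZ\cap(T_pZ)^{\perp} = \fg\cdot p\oplus\ev{R}$, of dimension $\dim(\fg\cdot p) + 1$; since $0$ is a regular value, $G$ acts locally freely on $\mu^{-1}(0)$, so $\dim(\fg\cdot p) = \dim G$ is constant and $d\alpha$ has constant rank. I expect the only genuinely delicate point to be the reduction of (i) to isotropy of the orbit directions, namely the careful bookkeeping between the $d\alpha$-orthogonal complement in $T_pM$ (which always contains $R$) and the symplectic orthogonal complement inside $W$; once this is set up correctly, the abelianness of $G$ makes the required isotropy automatic.
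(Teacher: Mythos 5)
Your proposal is correct and takes essentially the same route as the paper's proof: both identify $T_p\mu^{-1}(0)\cap\ker\alpha_p=(T_p(G\cdot p))^{\perp_\omega}$ via $T_p\mu^{-1}(0)=\ker d\mu_p=(T_p(G\cdot p))^{\perp_{d\alpha}}$ together with the splitting $(T_p(G\cdot p))^{\perp_{d\alpha}}=(T_p(G\cdot p))^{\perp_\omega}\oplus\R R_p$, and then conclude coisotropy from reflexivity of the symplectic orthogonal. The only difference is cosmetic: where the paper invokes $G$-invariance of $\mu$ to get $T_p(G\cdot p)\subseteq\zeta_p$, you prove the equivalent fact that orbit directions are $d\alpha$-isotropic directly from Cartan's formula and the abelianness of $\fg$, and your additional verifications of (ii), (iii) and of the constant-rank hypothesis (which the paper dismisses as obvious, respectively leaves implicit) are correct.
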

\begin{proof}
	 \it{(ii)} and \it{(iii)} are obviously satisfied. To show that the distribution $\zeta := T\mu^{-1}(0) \cap \ker \alpha$ is coisotropic in $(\ker \alpha, d \alpha|_{\ker \alpha}=:\omega)$, recall that 0 is a regular value of $\mu$, hence, 
		\begin{align}\label{eq Tp mu = ker d mu}
			T_p\mu^{-1}(0)=\ker d\mu_p.
		\end{align}
		$v \in \ker d\mu_p$ if and only if $ d\mu^X_p(v)=(d\iota_{X_M}\alpha)_p(v)=0 $ for every $X\in \fg$. Since $\alpha$ is $G$-invariant, $L_{X_M}\alpha=0$, and Cartan's formula yields that $v \in \ker d\mu_p$ if and only if  $d\alpha_p(X_M,v)=0$ for every $X \in \fg$. It follows that 
		\begin{align}\label{eq ker d mu}
			\ker d \mu_p = (T_p G \cdot p)^{\perp_{d\alpha}}
		\end{align}
		since the tangent space to the $G$-orbit consists of all fundamental vector fields. For $p \in \mu^{-1}(0)$, it is $0=\mu(p)(X) =  \alpha_p(X_{M}(p))$ for every $X \in \fg$. In particular, $T_p(G\cdot p) \subset \ker \alpha_p$. 
		It follows that $(T_p G \cdot p)^{\perp_{d\alpha}} =  (T_p G \cdot p)^{\perp_{\omega}}\oplus \R R_p$. Equations \eqref{eq ker d mu} and \eqref{eq Tp mu = ker d mu} yield $T_p \mu^{-1}(0) \cap \ker \alpha_p = T_p(G\cdot p)^{\perp_\omega} =: \zeta_p$. 
		Then $\zeta_p^{\perp_\omega}=T_p(G\cdot p)$. $\mu$ is $G$-invariant, so for every $X \in \fg$, $d\mu_p(X_{M}(p))=0$. 
		We obtain $\zeta_p^{\perp_\omega} = T_p(G\cdot p) \subset \ker d\mu_p = (T_p G \cdot p)^{\perp_{d\alpha}} $ and, hence, 
		$ \zeta_p^{\perp_\omega}\subset (T_p G \cdot p)^{\perp_{d\alpha}}\cap \ker \alpha_p = (T_p G \cdot p)^{\perp_{\omega}}=\zeta_p$,
		$\zeta$ is coisotropic. 
\end{proof}

\begin{lemma}\label{lem mu - mu times g coisotropic}
	Suppose that 0 is a regular value of the contact moment map. Then the embedding $\mu^{-1}(0) \cong \mu^{-1}(0) \times \{0\} \hookrightarrow \mu^{-1}(0) \times \fg^*$ satisfies \it{(i)-(iii)} of Theorem \ref{theorem contact coiso emb} with $K=G\times T$, 
	where a neighborhood  $U = \mu^{-1}(0) \times V$ of $\mu^{-1}(0) \times \{0\} \subset \mu^{-1}(0) \times \fg^*$ is endowed with the contact form $\tilde \alpha := i^* \alpha + z(\theta)$, we denote the inclusion $\mu^{-1}(0) \hookrightarrow M$ by $i$, the coordinates on $\fg^*$ by $z$ and $\theta$ is a $G$-invariant $R$-basic connection form on $\mu^{-1}(0) \rightarrow \mu^{-1}(0)/G$. 
	Furthermore, $R$ is the Reeb vector field of $(\mu^{-1}(0) \times \fg^*,\tilde \alpha)$.
\end{lemma}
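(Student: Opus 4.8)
The plan is to verify conditions (i)--(iii) of Theorem~\ref{theorem contact coiso emb} directly for the zero-section embedding $i_2\colon \mu^{-1}(0)\cong\mu^{-1}(0)\times\{0\}\hookrightarrow\mu^{-1}(0)\times\fg^*$, computing the Reeb field of $\tilde\alpha$ along the way. Write $Z:=\mu^{-1}(0)$, let $\pi\colon Z\times\fg^*\to Z$ be the projection, put $\bar\alpha:=\pi^*i^*\alpha$, and set $\omega:=d\alpha|_{\ker\alpha}$. Fix a basis $\{e_a\}$ of $\fg$ with dual basis $\{e^a\}$, write $\theta=\sum_a\theta^a e_a$ and let $z_a$ be the corresponding linear coordinates on $\fg^*$, so that $z(\theta)=\sum_a z_a\theta^a$ and $\tilde\alpha=\bar\alpha+\sum_a z_a\theta^a$. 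I would let $G\times T$ act on $Z\times\fg^*$ via its action on $Z$ and trivially on $\fg^*$. First I would record that a connection form $\theta$ with the required invariance exists: since the vertical distribution $V:=T(G\cdot p)$ is a $G\times T$-invariant subbundle and $\R R$ is a $G\times T$-invariant line transverse to it (the Reeb orbits being transverse to the $G$-orbits along $\mu^{-1}(0)$), one can choose a $G\times T$-invariant metric for which $R\perp V$; the associated principal $G$-connection $\theta$ is then $G$-invariant, satisfies $\iota_R\theta=0$ (as $R$ is horizontal) and $L_R\theta=0$ (by $T$-invariance), hence is $R$-basic. Condition (ii) is then immediate: $\pi\circ i_2=\mathrm{id}_Z$ and $z(\theta)|_{Z\times\{0\}}=0$ give $i_2^*\tilde\alpha=i^*\alpha$, and $\tilde\alpha$ is $G\times T$-invariant because $\alpha$ is invariant on $Z$, the $z_a$ are invariant, and $\theta$ is $G$-invariant and $T$-invariant.

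The central computation is the identification of the Reeb field, which proves the ``Furthermore'' clause and the substance of (iii). I would extend $R$ to $Z\times\fg^*$ as the fundamental vector field of its generator $X_R\in\ft\subset\fg\oplus\ft$; since the action on the $\fg^*$-factor is trivial this extension is tangent to the $Z$-factor and equals $di_2(R|_Z)$, with $R|_Z$ nowhere vanishing because $\iota_R\alpha=1$. Expanding
\[ d\tilde\alpha=\pi^*i^*d\alpha+\sum_a dz_a\wedge\theta^a+\sum_a z_a\,d\theta^a, \]
I would feed in the defining properties of $R$: it is tangent to $Z$ with $\iota_R i^*\alpha=1$ and $\iota_R i^*d\alpha=0$ (as $\mu^{-1}(0)$ is $T$-invariant); it satisfies $\iota_R\theta^a=0$ and $\iota_R\,d\theta^a=L_R\theta^a-d\iota_R\theta^a=0$ because $\theta$ is $R$-basic; and $dz_a(R)=0$ since $z_a$ is constant along the $Z$-directions. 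These yield $\iota_R\tilde\alpha=1$ and $\iota_R d\tilde\alpha=0$, so $R$ is the Reeb field of $\tilde\alpha$ wherever the latter is contact, and (iii) holds with $X=X_R$.

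It remains to establish the coisotropy in (i), which I expect to be the main obstacle, and as a byproduct that $\tilde\alpha$ is genuinely contact near the zero section. Along $Z\times\{0\}$ one has $\tilde\alpha=\bar\alpha$ and $d\tilde\alpha=\pi^*i^*d\alpha+\sum_a dz_a\wedge\theta^a$, so $\ker\tilde\alpha_{(p,0)}=\zeta_p\oplus\fg^*$ with $\zeta_p=T_pZ\cap\ker\alpha_p$, while $di_2(T_pZ)\cap\ker\tilde\alpha_{(p,0)}=\zeta_p$. On $\ker\tilde\alpha_{(p,0)}$ the form $d\tilde\alpha$ is the bilinear pairing $B((v,w),(v',w'))=\omega(v,v')+\ev{w,\theta(v')}-\ev{w',\theta(v)}$. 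The crux is computing $\zeta_p^{\perp_B}$: here I would invoke Lemma~\ref{lem mu - M coisotropic}, which gives $\zeta_p=T_p(G\cdot p)^{\perp_\omega}$ and $\zeta_p^{\perp_\omega}=T_p(G\cdot p)$. Testing $B$-orthogonality against $v\in T_p(G\cdot p)\subseteq\zeta_p$ makes the $\omega$-term vanish and forces $\ev{w',\theta(v)}=0$ for all $v\in T_p(G\cdot p)$; since $\theta$ maps $T_p(G\cdot p)$ isomorphically onto $\fg$ this gives $w'=0$, and the remaining condition $\omega(\zeta_p,v')=0$ then puts $v'\in\zeta_p^{\perp_\omega}=T_p(G\cdot p)\subseteq\zeta_p$. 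Thus $\zeta_p^{\perp_B}\subseteq\zeta_p$, so $\zeta_p$ is coisotropic, proving (i).

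The hardest part is this last computation, precisely because it is where the structural identities from Lemma~\ref{lem mu - M coisotropic} must be combined with the mixed term $\sum_a dz_a\wedge\theta^a$; the same two identities, applied once more, show that $B$ is nondegenerate on $\zeta_p\oplus\fg^*$ (testing $w=0$ gives $w'=0$ and $v'\in T_p(G\cdot p)$, and testing $v=0$ gives $\theta(v')=0$, hence $v'=0$). Together with $\iota_R\tilde\alpha=1$ and $\iota_R d\tilde\alpha=0$ this yields $\tilde\alpha\wedge(d\tilde\alpha)^n\neq0$ on the zero section, and by openness of the contact condition $\tilde\alpha$ is contact on a neighborhood $U=Z\times V$, completing the verification.
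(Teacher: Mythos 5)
Your proof is correct, and its skeleton (the trivial verification of (ii), and the identification of the Reeb field via $\iota_R\theta=0$, $\iota_R d\theta=0$, $dz_a(R)=0$, valid on all of $U$) coincides with the paper's; where you genuinely diverge is in how condition (i) and the contact property of $\tilde\alpha$ are established. The paper first shows $\tilde\alpha$ is contact near the zero section by the explicit volume-form identity $\tilde\alpha\wedge(d\tilde\alpha)^n=(-1)^{s(s+1)/2}s!\ i^*(\alpha\wedge(d\alpha)^{n-s})\wedge\Omega^s\wedge dz^s$ at $z=0$, and then gets coisotropy structurally: it computes the contact moment map of $(U,\tilde\alpha)$ itself, finds $\tilde\mu(p,z)=z$, observes that the embedded copy of $\mu^{-1}(0)$ is exactly $\tilde\mu^{-1}(0)$ with $0$ a regular value, and re-runs the argument of Lemma \ref{lem mu - M coisotropic} for $(U,\tilde\alpha,\tilde\mu)$ --- i.e.\ it reuses the general principle that a regular zero level of a contact moment map meets the contact distribution coisotropically. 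You instead do the symplectic linear algebra by hand at the zero section, writing $\ker\tilde\alpha_{(p,0)}=\zeta_p\oplus\fg^*$ and the bilinear form $B$ with its mixed term $\ev{w,\theta(v')}-\ev{w',\theta(v)}$, and importing the two conclusions $\zeta_p=(T_p(G\cdot p))^{\perp_\omega}$ and $\zeta_p^{\perp_\omega}=T_p(G\cdot p)$ of Lemma \ref{lem mu - M coisotropic} together with injectivity of $\theta$ on orbit directions. So both proofs lean on Lemma \ref{lem mu - M coisotropic} --- the paper on its method, you on its conclusions. Your route has the merit that nondegeneracy of $B$, hence the contact condition, drops out of the same computation with no separate volume-form identity; the paper's route is shorter, and its explicit volume form also fixes orientation conventions of the kind reused in Proposition \ref{prop-q0}. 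A final cosmetic difference: you build the $G$-invariant $R$-basic connection form from an invariant metric with $R$ horizontal, whereas the paper invokes Molino's adapted connections plus averaging (Remark \ref{rem existence basic con form}); both are valid.
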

\begin{remark}\label{rem existence basic con form}
	Note that a $G$-invariant $R$-basic connection form has to exist: By \cite[Proposition~2.8]{Molino}, there always exists a connection that is \emph{adapted} to the lifted foliation, i.e., such that the tangent spaces to the leaves are horizontal. 
	Since $G\times T$ is compact, we can obtain a $G\times T$-invariant adapted connection form by averaging over the group. But this connection form then has to be basic, or, as Molino calls it, \emph{projectable}. 
\end{remark}

\begin{proof}
	Let $j: \mu^{-1}(0) \to \mu^{-1}(0) \times \fg^\ast$ denote the embedding given by $x \mapsto (x,0)$. Then $j^\ast\widetilde\alpha = i^\ast \alpha$ by construction. Choose an orthonormal basis $(X_i)$ of $\fg$ and denote its dual basis by $(u_i)$. Then $\theta = \sum \theta_i X_i$ and $z = \sum z_i u_i$. 
	With $\Omega^s = \theta_i \wedge... \wedge \theta_s$ and $dz^s = dz_i \wedge ... \wedge dz_s$, at $z=0$, we have
	\[ \widetilde\alpha \wedge (d\widetilde \alpha)^n = (-1)^{s(s+1)/2}s!\ i^\ast(\alpha \wedge (d\alpha)^{n-s}) \wedge \Omega^s \wedge dz^s, \]
	which is non-degenerate.  Therefore, there is a neighborhood $U = \mu^{-1}(0) \times V$ of $\mu^{-1}(0) \times \{0\}$ in $\mu^{-1}(0) \times \fg^\ast$ on which $\widetilde\alpha$ is a contact form. 
	$\theta$ is $R$-basic, so $\iota_R\theta = 0$ and $\iota_R \tilde \alpha = \iota_R i^*\alpha = i^* \iota_R \alpha = 1$. 
	$d\theta$ is $R$-basic, as well, so $\iota_R d\theta = 0$. $R$ is tangent to $\mu^{-1}(0)$, so $dz_i(R)=0$. We obtain $\iota_R d \tilde \alpha = \iota_R(i^* d \alpha + dz(\theta) + z(d\theta)) = 0$. By uniqueness, $R$ is the Reeb vector field of $(U,\tilde \alpha)$.
	It remains to show that the distribution $\zeta_p := T_p\mu^{-1}(0) \cap \ker \tilde \alpha_p$ is coisotropic in the symplectic vector bundle $(\ker \tilde \alpha, d \tilde \alpha|_{\ker \tilde \alpha}=:\omega)$. 
	The contact moment map $\tilde \mu$ on $(\mu^{-1}(0) \times \fg^*,\tilde \alpha)$ is easily computed to be $\tilde \mu(p,z)= z$, hence, $\tilde \mu^{-1}(0) = \mu^{-1}(0) \times \{0\} = i(\mu^{-1}(0))$. $d \tilde \mu = dz$ has $0$ as a regular value, so $\ker d \tilde \mu_{(p,0)}=T_{(p,0)}(\mu^{-1}(0)\times \{0\})$. 
	The rest of the proof works analogously to that of Lem\-ma \ref{lem mu - M coisotropic}.
\end{proof}

Applying Theorem \ref{theorem contact coiso emb} to the two coisotropic embeddings in Lemmata \ref{lem mu - M coisotropic} and \ref{lem mu - mu times g coisotropic}, we obtain a local normal form of $\mu$ around $\mu^{-1}(0)$.

\begin{proposition} \label{prop-normal-form}
Suppose that $0$ is a regular value of $\mu$. 
Then there is a $G\times T$-invariant neighborhood $U$ of $\mu^{-1}(0)$ which is equivariantly diffeomorphic to a neighborhood of $\mu^{-1}(0) \times \{0\}$ in $\mu^{-1}(0) \times \fg^\ast$ of the form $\mu^{-1}(0)\times B_h$, $B_h=\{z \in \fg \mid |z| \leq h\}$, such that in this neighborhood the contact form $\alpha$ is equal to $q^\ast \alpha_0 + z(\theta)$, where $\theta \in \Omega^1(\mu^{-1}(0), \CF, \fg)$ is a $G$-invariant, $\CF$-basic connection 1-form on $q: \mu^{-1}(0) \to \mu^{-1}(0) / G$. 
In particular, on $U$, the moment map is given by $\mu(p,z)=z$.
\end{proposition}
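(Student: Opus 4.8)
The plan is to read off the normal form as a direct application of the Contact Coisotropic Embedding Theorem (Theorem~\ref{theorem contact coiso emb}) to the two embeddings whose hypotheses are verified in Lemmata~\ref{lem mu - M coisotropic} and~\ref{lem mu - mu times g coisotropic}. First I would assemble the common data: take $Z = \mu^{-1}(0)$ equipped with the $1$-form $i^\ast\alpha$ and the compact group $K = G \times T$. The form $i^\ast d\alpha$ has constant rank, since along $\mu^{-1}(0)$ its radical is $T_p(G\cdot p) \oplus \R R_p$ (using $(T_p(G\cdot p))^{\perp_{d\alpha}} = T_p\mu^{-1}(0)$ from the proof of Lemma~\ref{lem mu - M coisotropic} and $R_p \notin \ker\alpha_p \supseteq T_p(G\cdot p)$), whose dimension $s+1$ is constant because $G$ acts locally freely there. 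The two targets are $(X_1, \alpha_1) = (M, \alpha)$ with $i_1$ the inclusion $i$, and $(X_2, \alpha_2) = (\mu^{-1}(0)\times\fg^\ast, \widetilde\alpha)$ with $\widetilde\alpha = i^\ast\alpha + z(\theta)$ and $i_2$ the zero-section embedding $j$; conditions (i)--(iii) hold for both by the two lemmata, the common Reeb generator in (iii) being the topological generator of $T \subset K$.

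Invoking Theorem~\ref{theorem contact coiso emb} then produces $G\times T$-invariant neighborhoods $U_1 \subset M$ of $\mu^{-1}(0)$ and $U_2 \subset \mu^{-1}(0)\times\fg^\ast$ of the zero section, together with a $G\times T$-equivariant diffeomorphism $\varphi: U_1 \to U_2$ satisfying $\varphi^\ast\widetilde\alpha = \alpha$ and $j = \varphi\circ i$. Transporting coordinates back through $\varphi^{-1}$ identifies a neighborhood of $\mu^{-1}(0)$ in $M$ with a neighborhood of the zero section in $\mu^{-1}(0)\times\fg^\ast$ on which $\alpha$ equals $i^\ast\alpha + z(\theta)$.

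Two identifications then match the statement, both routine. First, $i^\ast\alpha$ descends to the quotient contact form: it is $G$-invariant because $\alpha$ is, and horizontal along $\mu^{-1}(0)$ because $\iota_{X_M} i^\ast\alpha = i^\ast\ev{\mu, X} = 0$ there, so $i^\ast\alpha = q^\ast\alpha_0$ and the normal-form contact form is $q^\ast\alpha_0 + z(\theta)$. Second, $\mu^{-1}(0)$ is compact as a closed subset of the compact manifold $M$, so $U_2$ contains a uniform tube $\mu^{-1}(0)\times B_h$ for some $h>0$ (under the metric identification $\fg^\ast\cong\fg$), which I would take as the final neighborhood, shrinking $U_1$ accordingly while preserving $G\times T$-invariance.

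For the moment-map assertion, I would use that $\varphi$ is a $G$-equivariant contactomorphism and hence intertwines the contact moment maps: for every $X\in\fg$, equivariance and $\alpha = \varphi^\ast\widetilde\alpha$ give $\ev{\mu, X} = \alpha(X_M) = (\varphi^\ast\widetilde\alpha)(X_M) = \ev{\widetilde\mu, X}\circ\varphi$. Since $\widetilde\mu(p,z) = z$ was computed in the proof of Lemma~\ref{lem mu - mu times g coisotropic}, this yields $\mu(p,z) = z$ in the transported coordinates. The only genuine content lies in the two coisotropicity lemmata and in Theorem~\ref{theorem contact coiso emb} itself, all already established, so the proposition is essentially a packaging of these; I anticipate no real obstacle beyond bookkeeping, principally confirming the constant-rank hypothesis at the outset and tracking the metric identification $\fg^\ast\cong\fg$ used to write $B_h\subset\fg$.
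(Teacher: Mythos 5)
Your proposal is correct and follows exactly the paper's route: the paper proves this proposition in one line by applying Theorem~\ref{theorem contact coiso emb} to the two coisotropic embeddings established in Lemmata~\ref{lem mu - M coisotropic} and~\ref{lem mu - mu times g coisotropic}. Your additional bookkeeping (the constant-rank check for $i^\ast d\alpha$, the identification $i^\ast\alpha = q^\ast\alpha_0$, the compactness argument for $B_h$, and the intertwining of moment maps under the equivariant contactomorphism) simply makes explicit what the paper leaves implicit.
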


\section{Localization}
\subsection{Basic equivariant Thom isomorphism} \label{sec-thom}
Let $i: A \hookrightarrow M$ denote the inclusion of a $G\times T$-invariant closed submanifold of codimension $d$. 
The goal of this section is to construct a basic equivariant pushforward $i_\ast: H_G(A, \CF) \to H_G(M, \CF)$ which raises cohomological degree by $d$. We will follow the presentation in \cite[Chapter 10]{GS99} very closely. 

To begin, let $p: U \to A$ denote the projection of a $G \times T$-invariant tubular neighborhood. 
Since $U$ is a $G \times T$-equivariant fiber bundle over $A$, there is a well-defined pushforward map $p_\ast: \Omega^{k}_{G,c}(U) \to \Omega^{k-d}_G(A),$ defined by fiberwise integration. 
Note that $p_*$ maps equivariant basic forms to equivariant basic forms. From the definition of $p_\ast$ we immediately obtain the following, which shows that $p_\ast$ descends to a well-defined map on equivariant (basic) cohomology.

\begin{lemma}\label{lemma-pushforward} Let $p: U \to A$ be the projection and let $p_\ast: \Omega_{G,c}(U) \to \Omega_G(A)$ denote fiberwise integration. Then we have for all $\eta \in \Omega_{G,c}(U)$ and for all $\beta \in \Omega_G(A)$
\begin{equation*} \int_U p^\ast \beta \wedge \eta = \int_A \beta \wedge p_\ast \eta. \end{equation*}
\end{lemma}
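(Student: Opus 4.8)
The plan is to reduce the equivariant identity to the classical projection formula for fiber integration of ordinary differential forms, and then to invoke two standard properties of fiberwise integration. In the Cartan model an element of $\Omega_G(U)$ is a $G$-equivariant polynomial map $\fg \to \Omega(U)$, and the operations $p_\ast$, $p^\ast$, the wedge product and the integrals $\int_U$, $\int_A$ all act $S(\fg^\ast)$-linearly, affecting only the differential-form part and leaving the polynomial coefficients untouched. Hence, writing $\eta(\xi)$, $\beta(\xi)$ for the values at a fixed $\xi \in \fg$, the claimed identity holds for all $\xi$ once it is known for the ordinary forms $\eta(\xi) \in \Omega_c(U)$ and $\beta(\xi) \in \Omega(A)$; the compact support along the fibers of $\eta$ is exactly what guarantees the fiber integral converges. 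I would first record this reduction, noting that $G$-invariance of the data is automatic because the tubular neighborhood $p \colon U \to A$ is $G \times T$-equivariant by construction.

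For ordinary forms the identity rests on two standard facts about integration along the oriented fibers of $p$. The first is the projection formula at the level of forms,
\[ p_\ast\big( p^\ast \beta \wedge \eta \big) = \beta \wedge p_\ast \eta, \]
expressing that $p_\ast$ is a morphism of $\Omega(A)$-modules. This is immediate from the definition of $p_\ast$: in a local trivialization $U \cong A \times F$ only the component of $\eta$ of top degree along $F$ survives the fiber integral, and since $p^\ast \beta$ carries no positive fiber-degree it factors out of that integral; the global statement follows via a partition of unity subordinate to such trivializations. The second fact is the iterated-integration identity $\int_U = \int_A \circ \, p_\ast$, that integrating a fiberwise-compactly-supported form over the total space equals integrating its fiber integral over the base, which is Fubini's theorem in this geometric guise.

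Combining the two gives
\[ \int_U p^\ast \beta \wedge \eta = \int_A p_\ast\big( p^\ast \beta \wedge \eta \big) = \int_A \beta \wedge p_\ast \eta, \]
the desired equality for ordinary forms, and hence, by the coefficient-wise reduction above, for equivariant forms as well. I expect the only real subtlety to be bookkeeping rather than genuine difficulty: one must fix the orientation of the fibers that makes $p_\ast$ well defined (this is part of the data defining the pushforward), and one must verify that the sign conventions of \cite[Chapter 10]{GS99} for $p_\ast$ are precisely those for which the projection formula holds with no extra sign. Once these conventions are in force, both displayed identities are standard properties of fiberwise integration, so the proof is essentially immediate --- as the phrase ``we immediately obtain'' preceding the statement already signals.
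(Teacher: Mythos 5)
Your proof is correct and matches the paper's intent: the paper states this lemma without proof, remarking only that it follows ``immediately from the definition of $p_\ast$,'' and the argument it has in mind is precisely your combination of the projection formula $p_\ast(p^\ast\beta\wedge\eta)=\beta\wedge p_\ast\eta$ with the Fubini identity $\int_U=\int_A\circ\,p_\ast$, applied coefficient-wise in the Cartan model. Your write-up simply supplies the standard details (including the sign-convention caveat) that the paper leaves implicit.
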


The basic equivariant pushforward $i_\ast$ will be constructed as follows. An \emph{equivariant basic Thom form} is a closed form $\tau \in \Omega^d_{G,c}(U, \CF)$ satisfying $p_\ast \tau = 1$.
We will give a construction of equivariant basic Thom forms at the end of this section. Suppose for now that an equivariant basic Thom form has been constructed. 
Then we define the basic equivariant pushforward as the composition 
\begin{equation} i_*: \Omega^k_G(A, \CF) \stackrel{p^\ast}{\to} \Omega^k_G(U, \CF) \stackrel{\wedge \tau}{\to} \Omega_{G,c}^{k+d}(U, \CF) \to \Omega_G^{k+d}(M, \CF), \label{eq basic equiv pushforward}\end{equation}
where the last arrow denotes extension by zero.

\begin{proposition} The basic equivariant pushforward satisfies, for all closed forms $\beta \in \Omega_G(A,\CF)$ and $\eta \in \Omega_G(U,\CF)$ 
\[ \int_M \eta \wedge i_\ast \beta = \int_A i^\ast \eta \wedge \beta. \]
\end{proposition}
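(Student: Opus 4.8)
The plan is to unwind both sides of the desired identity using the definition of the pushforward $i_\ast$ given in \eqref{eq basic equiv pushforward} together with the projection formula established in Lemma \ref{lemma-pushforward}. Writing $i_\ast\beta$ as the extension by zero of $p^\ast\beta \wedge \tau$, the key observation is that the integral $\int_M \eta \wedge i_\ast\beta$ can be computed entirely over the tubular neighborhood $U$, since $i_\ast\beta$ is supported in $U$ by construction.

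First I would restrict attention to $U$: because $i_\ast\beta$ vanishes outside $U$, we have
\begin{equation*}
  \int_M \eta \wedge i_\ast\beta = \int_U \eta|_U \wedge (p^\ast\beta \wedge \tau).
\end{equation*}
Next I would exploit the fact that $p: U \to A$ is a deformation retraction and that $\tau$ is compactly supported along the fibers. Applying Lemma \ref{lemma-pushforward} with the compactly supported form $\eta|_U \wedge \tau \in \Omega_{G,c}(U,\CF)$ and the form $\beta \in \Omega_G(A,\CF)$, and using that $p^\ast$ is an algebra homomorphism so that $\eta|_U \wedge p^\ast\beta \wedge \tau = p^\ast\beta \wedge (\eta|_U \wedge \tau)$ up to a sign that I would track carefully, the integral becomes $\int_A \beta \wedge p_\ast(\eta|_U \wedge \tau)$.

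The crux is then to evaluate $p_\ast(\eta|_U \wedge \tau)$. Since $\eta|_U$ is closed and $U$ retracts onto $A$, I would replace $\eta|_U$ by $p^\ast i^\ast\eta$ in cohomology, invoking Lemma \ref{lemma-bundle-cohomology} which guarantees $i^\ast: H_G(U,\CF) \to H_G(A,\CF)$ is an isomorphism with inverse $p^\ast$. Thus $\eta|_U$ and $p^\ast i^\ast\eta$ differ by an exact form, and since $\tau$ is closed and everything descends to cohomology, I may compute $p_\ast(p^\ast i^\ast\eta \wedge \tau)$ instead. By the ordinary projection formula for fiberwise integration, $p_\ast(p^\ast i^\ast\eta \wedge \tau) = i^\ast\eta \wedge p_\ast\tau = i^\ast\eta$, using the defining property $p_\ast\tau = 1$ of the Thom form. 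Substituting back yields $\int_A \beta \wedge i^\ast\eta = \int_A i^\ast\eta \wedge \beta$, as desired.

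The main obstacle I anticipate is not any single deep step but rather the bookkeeping: ensuring that the replacement of $\eta|_U$ by $p^\ast i^\ast\eta$ is legitimate at the level of integrals (it is, because both $\eta$ and $\beta$ are closed and the integral pairs cohomology classes), and tracking the signs arising from commuting the even- or odd-degree forms $\eta$, $p^\ast\beta$, and $\tau$ past one another. Since the statement is an identity of integrals of products of closed forms, and since the integrand's total degree must match $\dim M$ for the integral to be nonzero, these sign considerations should resolve cleanly; the essential input is simply the compatibility of the Thom form with fiberwise integration together with the cohomological triviality of the retraction $p$.
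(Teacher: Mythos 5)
Your proof is correct and takes essentially the same approach as the paper's: both restrict the integral to $U$, use Lemma \ref{lemma-bundle-cohomology} to replace $\eta|_U$ by $p^\ast i^\ast \eta$ up to an exact term, and conclude via the projection formula of Lemma \ref{lemma-pushforward} together with $p_\ast \tau = 1$. The only difference is an immaterial reordering (you apply the projection formula before the cohomological replacement, the paper after), and your anticipated signs do cancel since $\deg i^\ast\eta = \deg \eta$.
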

\proof $i_\ast \beta = p^*\beta \wedge \tau$ is a form compactly supported in an invariant neighborhood $U$ of $A$. Therefore we have
\allowdisplaybreaks
\begin{align*} 
\int_M \eta \wedge i_\ast \beta &= \int_U \eta \wedge p^\ast\beta \wedge \tau &\textrm{(by definition of $i_\ast$)}&\\
 &= \int_U p^\ast i^\ast \eta \wedge p^\ast \beta \wedge \tau &  \textrm{(by Lemma \ref{lemma-bundle-cohomology})}&\\
 &= \int_A i^\ast \eta \wedge \beta \wedge p_\ast \tau & \textrm{(by Lemma \ref{lemma-pushforward})}&\\
 &= \int_A i^\ast \eta \wedge \beta & \textrm{(by $p_\ast\tau=1$)}.& \qed
\end{align*}
As in \cite{GS99}, we obtain that the induced map on cohomology $p_*: H^k_{G,c}(U, \CF)$ $\to H^{k-d}_G(A,\CF)$ is an isomorphism with inverse $i_*$.

It remains to construct the equivariant basic Thom form. We use a variant of the Mathai-Quillen construction based on the presentation in \cite[Chapter~10]{GS99} (see also \cite{ToebenLoc, GNTlocalization} for closely related constructions). 
First we identify $U$ with the normal bundle $N \to A$, equipped with a $G \times T$-invariant metric. Let $P \to A$ denote the bundle of oriented orthonormal frames of $N$: it is a $G \times T$-equivariant principal $SO(d)$-bundle over $A$. Consider the map $P \times \R^d \rightarrow N$,
\[ (x, (e_1, \dots, e_d), v) \to (x, v_1 e_1 + \cdots + v_d e_d). \]
It gives a $G \times T$-equivariant diffeomorphism $(P \times \mathbb{R}^d) / SO(d) \cong N$. Equip $P$ with a $G\times T$-invariant basic connection form. Recall that such a form has to exist, see Remark \ref{rem existence basic con form}. 
Using the Cartan model of equivariant basic cohomology, the Cartan map yields isomorphisms 
\allowdisplaybreaks
\begin{align*}
  \phi_N:& \Omega_{SO(d) \times G,c}(P \times \mathbb{R}^d, \mathcal{E}\times \{*\}) \stackrel{\cong}{\to} \Omega_{G,c}(N, \CF)  \\
  \phi_A:&\Omega_{SO(d) \times G}(P, \mathcal{E}) \stackrel{\cong}{\to} \Omega_G(A, \CF),
\end{align*}
where $\mathcal{E}$ denotes the foliation induced by $R$ on $P$.
Let $p_2: P \times \mathbb{R}^d \to \mathbb{R}^d$ be the projection. We define $\tau$ by
\[ \tau := \phi_N(p_2^\ast(\nu \otimes 1)) \in \Omega_{G,c}(N, \CF) \]
where $\nu \in \Omega_{SO(d),c}(\R^d)$ is the (modified) universal Thom-Mathai-Quillen form as constructed in \cite[\S 10.3]{GS99}, $\nu\otimes 1 \in \Omega_{SO(d)\times G,c}(\R^d)$. By analogous arguments to \cite[\S 10.4]{GS99}, we hence have the following.

\begin{theorem} \label{thm-thom}
The form $\tau \in \Omega_{G,c}^d(U, \CF)$ as constructed above is a Thom form for the projection $p: U \to A$. Consequently, the basic equivariant pushforward $i_*: H_G^k(A, \CF) \to H_G^{k+d}(M, \CF)$ is well-defined.
\end{theorem}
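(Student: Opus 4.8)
The plan is to verify directly the two defining properties of a Thom form — that $\tau$ is equivariantly basic-closed and that $p_\ast\tau = 1$ — by transporting them across the Cartan isomorphisms $\phi_N$ and $\phi_A$ to the corresponding known properties of the universal form $\nu$ established in \cite[\S10.3--10.4]{GS99}. Since the whole construction is engineered so that $\tau = \phi_N(p_2^\ast(\nu \otimes 1))$ lands in $\Omega_{G,c}^d(N, \CF)$, the fact that $\tau$ is basic, compactly supported, and of the correct degree $d$ is automatic from the target of $\phi_N$ together with the degree of $\nu$; the content lies in closedness and normalization.

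For closedness, I would argue formally: the universal Mathai-Quillen form $\nu$ is $SO(d)$-equivariantly closed, hence $\nu \otimes 1$ is $SO(d) \times G$-equivariantly closed, and since pullback commutes with the equivariant differential, $p_2^\ast(\nu \otimes 1)$ is closed in $\Omega_{SO(d) \times G, c}(P \times \R^d, \CE \times \{*\})$. The Cartan map $\phi_N$ is an isomorphism of complexes, in particular a cochain map, so $d_G\tau = 0$. This is already enough to guarantee that the composition \eqref{eq basic equiv pushforward} descends to cohomology, since then $d_G(p^\ast\beta \wedge \tau) = p^\ast(d_G\beta)\wedge \tau$ for closed $\beta$.

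For the normalization $p_\ast \tau = 1$, the essential point is that fiberwise integration $p_\ast$ over the fibers of $N \to A$ corresponds, under the pair of Cartan isomorphisms, to fiber integration $\int_{\R^d}$ over the $\R^d$-factor of $P \times \R^d \to P$; that is, one has a commuting square $\phi_A \circ (\mathrm{id} \otimes \textstyle\int_{\R^d}) = p_\ast \circ \phi_N$. Granting this, the defining normalization $\int_{\R^d}\nu = 1$ of the universal form, applied to $p_2^\ast(\nu\otimes 1)$, produces the constant function $1$ on $P$, and $\phi_A$ sends this to $1 \in \Omega_G(A,\CF)$, giving $p_\ast\tau = 1$. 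The consequence that $i_\ast: H_G^k(A, \CF) \to H_G^{k+d}(M, \CF)$ is well-defined then follows immediately from the construction \eqref{eq basic equiv pushforward} and the preceding proposition, exactly as in \cite{GS99}.

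The main obstacle is the commuting-square compatibility in the last step. Differential-compatibility of $\phi_N$ is essentially formal, but verifying that the Cartan map intertwines fiberwise integration on $N$ with integration over $\R^d$ is where the foliated structure must be handled carefully: it relies on the $G \times T$-invariant \emph{basic} connection form on $P$ (whose existence is Remark \ref{rem existence basic con form}) being simultaneously compatible with the principal $SO(d)$-action used to form $N = (P \times \R^d)/SO(d)$ and with the lifted foliation $\CE$. In the ordinary non-basic equivariant setting this is precisely the content of \cite[\S10.4]{GS99}; the real work here is to check that each step of that argument preserves basicness and respects the foliation, which is what the phrase "analogous arguments" is carrying.
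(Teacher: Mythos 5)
Your proposal is correct and takes essentially the same route as the paper: the paper defines $\tau = \phi_N(p_2^\ast(\nu\otimes 1))$ and then simply invokes ``analogous arguments to \cite[\S 10.4]{GS99}'', and those arguments are exactly the two steps you spell out — closedness because $\nu\otimes 1$ is equivariantly closed and the Cartan map is a chain map, and $p_\ast\tau = 1$ from the compatibility of the Cartan isomorphisms with fiber integration over $\R^d$ together with $\int_{\R^d}\nu = 1$. Your closing observation that the genuine work is checking that each step of the GS99 argument respects basicness and the foliation (via the $G\times T$-invariant basic connection on $P$) is precisely what the paper's citation is carrying, so there is no gap relative to the paper's own level of detail.
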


\subsection{The localization formula}\label{subsec loc formula}

In this section, we would like to derive a basic version of an Atiyah-Bott-Berline-Vergne type localization formula. We follow the line of proof in \cite{AtiyahBottMomentMap}, adjusting it to the basic setting. 
We assume throughout that the $G$-fixed points have closed Reeb orbits. 
Then $\Crit (\mu)$, the minimal, 1-dimensional $G\times \{\phi_t\}$-orbits, are the 1-dimensional $G\times T$-orbits.
This as\-sump\-tion is obviously satisfied if all Reeb orbits are closed or 
if there are no $G$-fixed points. Note that the later is the case if 0 is a regular value of the contact moment map $\mu$. 
Throughout this section, we work with cohomology with complex coefficients. Then $S(\fg^*)=\C[u_1, ..., u_s]$, where the $u_i$ are coordinates of $\fg^*\otimes \C$. We will make use of the notion of the \emph{support} of a finitely generated module. Recall that in the special case of a module $H$ over $\C[u_1, ..., u_l]$, the support is the subset of $\C^l$ defined by:
\[\Supp H = \bigcap_{\stackrel{f \in \C[u_1, ..., u_l]}{fH=0}} V_f,\] 
where $V_f= \{u \in \C^l \mid f(u) = 0\}$. In particular, a free module has the whole space $\C^l$ as support. An element $h \in H$ is called a \emph{torsion} element if there is a $0\neq f \in \C[u_1, ..., u_l]$ with $fh=0$. If all elements are torsion elements, then $H$ is called a \emph{torsion module}. Note that $H$ is a torsion module if and only if $\Supp H$ is a proper subset of $\C^l$.
For more details, the reader is referred to  \cite[Section 3]{AtiyahBottMomentMap} and the reference therein.

\begin{definition} For $x \in M$ we denote by $\fg_x$ and $\widetilde\fg_x$ the \emph{stabilizer algebra} and \emph{generalized stabilizer algebra}, respectively, to be
\begin{align*}
  \fg_x= \{\xi \in \fg  \ | \  \xi_M(x) = 0 \}, \qquad  \widetilde\fg_x = \{\xi \in \fg  \ | \  \xi_M(x) \in \R R(x) \},
\end{align*}
where $R(x) \in T_x M$ denotes the Reeb vector at $x$.
\end{definition}

Then $\Crit(\mu)=\{x \in M \mid \widetilde \fg_x = \fg\}$. By our assumption, $\Crit(\mu)$ is the union of the 1-dimensional $G\times T$-orbits, and every connected component is a closed submanifold of even codimension (cf. \cite[Lemma~9]{Casselmann2016}). 

\begin{lemma}\label{lemma-support-orbit} 
Let $O = (G \times T) \cdot x$ be an orbit and suppose that $U \subseteq M$ is a $G\times \{\phi_t\}$-invariant submanifold admitting a $G \times \{\psi_t\}$-equivariant map $p: U \to O$. Then \[\Supp H_G(U, \CF) \subseteq \widetilde \fg_x\otimes \C.\]
\end{lemma}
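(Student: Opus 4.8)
The plan is to factor the computation through the single orbit $O$ and then to evaluate $H_G(O,\CF)$ explicitly as the cohomology of a Koszul complex, reading off its support directly.

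First I would pull back along $p$. Since $p\colon U\to O$ is $G\times\{\phi_t\}$-equivariant, differentiating the intertwining relation $p\circ\phi_t=\psi_t\circ p$ at $t=0$ gives $dp\circ R=R_O\circ p$, where $R_O$ generates $\{\psi_t\}$ on $O$; hence $\iota_R p^\ast\omega=p^\ast\iota_{R_O}\omega$ and $L_R p^\ast\omega=p^\ast L_{R_O}\omega$, so $p^\ast$ carries basic forms to basic forms and induces a unital $H_G$-algebra homomorphism $p^\ast\colon H_G(O,\CF)\to H_G(U,\CF)$. Consequently, if $f\in H_G$ satisfies $f\cdot H_G(O,\CF)=0$, then $f\cdot 1=0$ in $H_G(O,\CF)$ forces $f\cdot 1=p^\ast(f\cdot 1)=0$ in $H_G(U,\CF)$, so $f$ annihilates $H_G(U,\CF)$ as well. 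Thus every $f$ annihilating $H_G(O,\CF)$ annihilates $H_G(U,\CF)$, and the intersection defining $\Supp H_G(U,\CF)$ runs over a larger family of $f$; therefore $\Supp H_G(U,\CF)\subseteq\Supp H_G(O,\CF)$, and it suffices to prove $\Supp H_G(O,\CF)=\widetilde\fg_x\otimes\C$.

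Next I would compute $H_G(O,\CF)$ using invariant forms. Writing $K=G\times T$, $L=K_x$ and $\fl=\fk_x$, we have $O=K/L$, and $K$ is a compact torus that preserves $\CF$ and commutes with the Reeb flow; hence the standard averaging homotopy operator restricts to the equivariant basic complex and shows that the inclusion of $K$-invariant equivariant basic forms into $\Omega_G(O,\CF)$ is a quasi-isomorphism. On $O=K/L$ the $K$-invariant forms are identified with $\Lambda((\fk/\fl)^\ast)$ (the isotropy representation of $L$ on $\fk/\fl$ is trivial, since $\mathrm{Ad}_K$ is trivial for abelian $K$, so disconnectedness of $L$ plays no role), and the de Rham differential vanishes on them because the Chevalley–Eilenberg differential of the abelian pair is zero. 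Imposing horizontality $\iota_{[\rho]}=0$, where $[\rho]$ is the class of the Reeb generator $\rho\in\ft$, identifies the invariant basic forms with $\Lambda(W^\ast)$ for $W:=\fk/(\fl+\R\rho)$. On this subcomplex the Cartan differential reduces to $d_G\beta(\xi)=-\iota_{\xi_O}\beta(\xi)$, which is precisely the Koszul differential of the linear map $\psi\colon\fg\to W$, $\xi\mapsto\xi+(\fl+\R\rho)$. Standard Koszul theory (decompose $W^\ast=\ker\psi^\ast\oplus C$ with $C\cong\mathrm{im}\,\psi^\ast$) then gives $H_G(O,\CF)\cong\big(S(\fg^\ast)/(\mathrm{im}\,\psi^\ast)\big)\otimes\Lambda(\ker\psi^\ast)$, whose support over $S(\fg^\ast)$ is the common zero locus of $\mathrm{im}\,\psi^\ast$, namely $\ker\psi\otimes\C$. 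Since $\ker\psi=\fg\cap(\fl+\R\rho)=\fg\cap(\fk_x+\R\rho)=\widetilde\fg_x$, this yields the desired identity.

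I expect the main obstacle to be the second step: one must verify carefully that averaging over the compact torus $K$ preserves basicness (it does, as $K$ commutes with the Reeb flow and preserves $\CF$) and is a chain homotopy equivalence on the equivariant basic complex, and that after restriction to $K$-invariant forms the only surviving part of $d_G$ is the contraction term, so that the complex is genuinely the Koszul complex of $\psi$. Once this model is in place, the conceptual heart — the identification $\ker\psi=\widetilde\fg_x$ — is immediate, and the reduction from $U$ to $O$ in the first step is purely formal.
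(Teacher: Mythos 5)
Your proof is correct, and while your first step coincides with the paper's (both factor the $S(\fg^\ast)$-module structure of $H_G(U, \CF)$ through $p^\ast: H_G(O,\CF) \to H_G(U,\CF)$, so that the support inclusion reduces to a statement about the orbit), your treatment of the orbit itself takes a genuinely different route. The paper never computes $H_G(O,\CF)$: it chooses a complement $\fk$ of $\widetilde\fg_x$ in $\fg$ that is the Lie algebra of a subtorus $K \subseteq G$, uses that $\widetilde\fg_x$ acts trivially on basic forms to split the Cartan complex as $S(\widetilde\fg_x^\ast)\otimes S(\fk^\ast)\otimes \Omega(O,\CF)^{K}$ with $d_G = 1\otimes d_K$, and then shows $H_K(O,\CF)$ is finite-dimensional as a vector space: $K$ acts locally freely and transversally to $\CF$, so $\Omega(O,\CF)$ is a $\fk$-dga of type (C), whence $H_K(O,\CF)$ equals the basic cohomology of the Riemannian foliation by $K\times\{\phi_t\}$-orbits, which is finite-dimensional by El Kacimi--Sergiescu--Hector; this forces $H_K(O,\CF)$ to be torsion over $S(\fk^\ast)$ and yields the inclusion of supports. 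You instead keep the full torus $G\times T$, pass to invariant forms by averaging, and identify the resulting complex with the Koszul complex of the linear map $\psi:\fg \to (\fg\oplus\ft)/\bigl(\mathfrak{l}+\R\rho\bigr)$, where $\mathfrak{l}$ is the isotropy algebra of $G\times T$ at $x$ and $\rho\in\ft$ generates the Reeb flow; solving that complex exactly gives $\Supp H_G(O,\CF) = \widetilde\fg_x\otimes\C$ on the nose. Each approach buys something. Yours is more self-contained and sharper: it avoids the finite-dimensionality theorem for Riemannian foliations, avoids having to arrange that the complement of $\widetilde\fg_x$ is rational (i.e.\ integrates to a subtorus), and produces the cohomology and the support exactly, not just an upper bound. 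The paper's soft argument avoids the one technical point you rightly flag -- that averaging over $G\times T$ is a chain homotopy equivalence of the equivariant basic complex -- although that point is unproblematic here: since $G\times T$ is abelian and contains the closure of the Reeb flow, contraction with its fundamental vector fields and pullback by its elements preserve basic forms, so the usual homotopy formula restricts to $\Omega_G(O,\CF)$; moreover, the paper's machinery is the version that would generalize beyond homogeneous orbits. Both arguments establish the lemma.
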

\begin{proof} First note that the $S(\fg^\ast)$-algebra structure on $H_G(U, \CF)$ factors as
\[ S(\fg^\ast) \to H_G(O, \CF) \to H_G(U, \CF), \]
whence $\Supp H_G(U, \CF) \subseteq \Supp H_G(O, \CF)$. Thus, it suffices to show
\[\Supp H_G(O, \CF) \subseteq \widetilde \fg_x\otimes \C.\]
For all $h \in G \times T$, we have $\widetilde \fg_{h \cdot x} = \widetilde \fg_x$. 
In particular, the generalized stabilizer is constant along $O$. Let $\fk$ be a complement of $\widetilde \fg_x$ in $\fg$ such that $\mathfrak{k}$ is the Lie algebra of a subtorus $K$ of $G$. Since $\widetilde \fg_x$ acts trivially on $\Omega (O, \CF)$, the Cartan complex can be written as $\Omega_G(O, \CF) = S(\widetilde \fg_x^*)\otimes S(\mathfrak{k}^*) \otimes \Omega (O, \CF)^{K}$ and $d_G = 1 \otimes d_{K}$, hence $H_G(O, \CF)=S(\widetilde \fg_x^*)\otimes H_{K}(O, \CF)$. 
$K$ acts locally freely and transversally on $O$, so $\Omega(O, \CF)$ is a $\mathfrak{k}$-dga  of type (C) (cf.~\cite[Def. 2.3.4]{GS99}) and $H_{K}(O, \CF)=  H(\Omega(O, \CF)_{ \mathrm{bas } \mathfrak{k}})$ (cf.~\cite[\S~4.6]{GS99} and \cite[Lemma~3.18]{GT2016equivariant}). It also follows that $K\times \{ \phi_t\}$ acts locally freely on $O$ so that the orbits of this action define a foliation $\mathcal{E}$ of $O$. Since $G\times T$ is compact, we can, in particular, find a metric with respect to which the $K\times \{ \phi_t\}$-action is isometric. Hence, $\mathcal{E}$ is a Riemannian foliation (cf. also \cite[p.~100]{Molino}). This, however, means that the basic cohomology $H(O, \mathcal{E})=H(\Omega(O, \CF)_{ \mathrm{bas } \mathfrak{k}})$ is of finite dimension by \cite[Th\'eor\`{e}me~0]{KSH85}. Therefore, the support of $H_G(O, \CF)$ is contained in $\widetilde \fg_x\otimes \C$.
\end{proof}

\begin{proposition}
	Let $X$ be a closed $G\times T$-invariant submanifold of $M$. Then the supports of $H_{G}^*(M \setminus X, \CF)$ and $H_{G,c}^*(M \setminus X, \CF)$ lie in 
	$\cup_{x \in M\setminus X} \widetilde \fg_x\otimes \C.$
	Note that since only finitely many different $\widetilde \fg_x$ occur on $M$, this is a finite union.
\end{proposition}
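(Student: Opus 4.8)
The plan is to bound both supports by covering $M\setminus X$ with $G\times T$-invariant tubular neighborhoods of orbits and patching the estimate of Lemma \ref{lemma-support-orbit} together by a Mayer--Vietoris induction, after first replacing the non-compact $M\setminus X$ by a compact invariant model.

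First I would record the two elementary inputs that make the patching work. For the single-tube estimate, observe that the conclusion of Lemma \ref{lemma-support-orbit} in fact holds for \emph{any} $G\times\{\phi_t\}$-invariant open subset $W$ of a tube $U_O$ around an orbit $O=(G\times T)\cdot x$, and for both $H_G(W,\CF)$ and $H_{G,c}(W,\CF)$: the restriction of the projection $U_O\to O$ makes $W\to O$ equivariant, so the $S(\fg^\ast)$-module structure on $H_G(W,\CF)$ (respectively $H_{G,c}(W,\CF)$, which is a module over $H_G(W,\CF)$) still factors through $H_G(O,\CF)$, whence its annihilator contains $\ker(S(\fg^\ast)\to H_G(O,\CF))$ and its support lies in $\Supp H_G(O,\CF)\subseteq\widetilde\fg_x\otimes\C$. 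For the patching, I would use that support is subadditive along exact sequences: if $A\to B\to C$ is exact as $S(\fg^\ast)$-modules, then $\mathrm{Ann}(A)\cdot\mathrm{Ann}(C)\subseteq\mathrm{Ann}(B)$, so $\Supp B\subseteq\Supp A\cup\Supp C$. Applied to the Mayer--Vietoris sequence of an invariant open cover $W=W_1\cup W_2$ (available from a short exact sequence of basic Cartan complexes exactly as in Proposition \ref{prop LES cohom}, using invariant partitions of unity, which exist since $G\times T$ is compact), this yields
\[ \Supp H_G(W,\CF)\subseteq \Supp H_G(W_1,\CF)\cup\Supp H_G(W_2,\CF)\cup\Supp H_G(W_1\cap W_2,\CF), \]
and likewise in the compactly supported theory.

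With these in hand, the combinatorial heart is a finite induction. For any \emph{compact} $G\times T$-invariant (possibly $\partial$-)submanifold $Y\subseteq M\setminus X$ I would show $\Supp H_G(Y,\CF)\subseteq\bigcup_{y\in Y}\widetilde\fg_y\otimes\C$ as follows. Each orbit $O\subseteq Y$ is compact and disjoint from the closed invariant set $X$, so the slice theorem for the compact group $G\times T$ furnishes an invariant tube $U_O\subseteq M\setminus X$ projecting equivariantly onto $O$; by compactness finitely many $U_1,\dots,U_N$ cover $Y$. Writing $Y'=U_1\cup\cdots\cup U_{N-1}$, the intersection $Y'\cap U_N$ is an invariant open subset of the tube $U_N$, so the extended single-tube estimate applies to it directly, and Mayer--Vietoris together with the induction hypothesis for $Y'$ closes the induction. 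The same argument works verbatim for $H_{G,c}$, and each $\widetilde\fg_y$ that occurs has $y\in Y\subseteq M\setminus X$.

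Finally I would remove the non-compactness of $M\setminus X$. For the ordinary cohomology, choose an invariant closed tubular neighborhood $\nu X$ and set $M_X:=M\setminus\mathrm{int}(\nu X)$, a compact invariant manifold with boundary contained in $M\setminus X$; the inclusion is a $G\times\{\phi_t\}$-equivariant homotopy equivalence, so Lemma \ref{lemma if homotopy equiv then cohom isom} gives $H_G(M\setminus X,\CF)\cong H_G(M_X,\CF)$, and the finite induction applied to $Y=M_X$ finishes this case. For the compactly supported cohomology, identify $M\setminus X$ equivariantly with $\mathrm{int}(M_X)$ and use the relative description $H_{G,c}(M\setminus X,\CF)\cong H_G(M_X,\partial M_X,\CF)$ (Proposition \ref{prop rel cohom iso comp supp} applied to $\partial M_X\subseteq M_X$); the long exact sequence of the pair (Proposition \ref{prop LES cohom}) bounds this support by $\Supp H_G(M_X,\CF)\cup\Supp H_G(\partial M_X,\CF)$, and the finite induction applied to the compact invariant spaces $M_X$ and $\partial M_X$, both lying in $M\setminus X$, again gives the claim. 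I expect the main obstacle to be precisely this last step: making the reduction to a compact model rigorous in the equivariant basic setting---verifying that the homotopy equivalence and the relative/compactly-supported identifications hold for basic Cartan complexes---so that only orbits genuinely lying in $M\setminus X$, and hence only generalized stabilizers $\widetilde\fg_x$ with $x\in M\setminus X$, enter the final union.
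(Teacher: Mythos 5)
Your proposal follows essentially the same route as the paper's proof: excise an invariant tubular neighborhood of $X$ to reduce to a compact invariant model, cover it by finitely many invariant tubes around $G\times T$-orbits, apply the single-orbit estimate of Lemma \ref{lemma-support-orbit} (extended, via the factorization of the module structure, to invariant open subsets of tubes and to compactly supported cohomology), and conclude by a Mayer--Vietoris induction using subadditivity of supports along exact sequences. The only minor difference is bookkeeping: you handle $H_{G,c}(M\setminus X,\CF)$ through the pair $(M_X,\partial M_X)$ and its long exact sequence, whereas the paper runs the Mayer--Vietoris induction directly in the compactly supported theory; both variants are standard and your treatment is, if anything, more explicit than the paper's.
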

\begin{proof}
	(cf. \cite[Proposition~3.4]{AtiyahBottMomentMap} and the proof thereof in \cite[Theorem~11.4.1]{GS99})
	Let $U$ be a $G\times T$-invariant tubular neighborhood of $X$. 
	By cohomology equivalence, it suffices to proof the assertion for $H_G(M\setminus \bar U, \CF)$. Since $M\setminus U$ is compact, we can cover $M\setminus \bar U$ with $N$ tubular neighborhoods $U_i$ of $G\times T$-orbits of points $x_i \in M\setminus U \subset M \setminus X$. Let $V_s = U_1 \cup ... \cup U_{s-1}$. 
	Using Lemma \ref{lemma-support-orbit} together with the equivariant basic Mayer-Vietoris sequence for $U_s$ and $V_s$ (cf.~\cite[Proposition~6]{Casselmann2016}; the sequence for the compactly supported case is obtained analogously by adjusting the proof of \cite[Proposition 2.7]{bott2013differential} to the equivariant basic setting), the claim follows by induction.
\end{proof}

Let $C:= \Crit(\mu)$.
The previous result then immediately yields the following:

\begin{corollary} \label{cor-torsion-module}
	The supports of $H_{G}^*(M \setminus C, \CF)$ and $H_{G,c}^*(M \setminus C, \CF)$ lie in $\bigcup_{\widetilde \fg_x \neq \fg} \widetilde \fg_x\otimes \C$. In particular, $H_{G}^*(M \setminus C, \CF)$ is a torsion module over $S(\fg^*)$.
\end{corollary}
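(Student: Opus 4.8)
The plan is to deduce Corollary \ref{cor-torsion-module} directly from the Proposition that immediately precedes it, by specializing $X = C = \Crit(\mu)$. Since $C$ is by construction a closed $G \times T$-invariant submanifold of $M$ (each connected component is a closed submanifold of even codimension, as noted above), the hypotheses of the Proposition are satisfied with $X = C$. Applying it verbatim yields
\[ \Supp H_G^*(M \setminus C, \CF), \ \Supp H_{G,c}^*(M \setminus C, \CF) \subseteq \bigcup_{x \in M \setminus C} \widetilde\fg_x \otimes \C. \]

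The next step is to rewrite this index set in the stated form. By the identity $\Crit(\mu) = \{x \in M \mid \widetilde\fg_x = \fg\}$ recorded just before Lemma \ref{lemma-support-orbit}, the complement $M \setminus C$ is precisely the set of points $x$ with $\widetilde\fg_x \neq \fg$. Hence
\[ \bigcup_{x \in M \setminus C} \widetilde\fg_x \otimes \C = \bigcup_{\widetilde\fg_x \neq \fg} \widetilde\fg_x \otimes \C, \]
which is exactly the claimed containment. I would emphasize that this is a finite union, since as remarked in the Proposition only finitely many distinct generalized stabilizer algebras $\widetilde\fg_x$ occur on the compact manifold $M$.

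It remains to argue the torsion statement. Each $\widetilde\fg_x$ appearing with $\widetilde\fg_x \neq \fg$ is a \emph{proper} subspace of $\fg$, so $\widetilde\fg_x \otimes \C$ is a proper linear subspace of $\fg \otimes \C \cong \C^s$. A finite union of proper linear subspaces is a proper algebraic subset of $\C^s$: each proper subspace is cut out by at least one nonzero linear form $\ell_j \in S(\fg^*)$, and the product $f = \prod_j \ell_j$ is a nonzero polynomial vanishing on the entire union. Since $\Supp H_G^*(M \setminus C, \CF)$ is contained in this union, it is contained in $V_f$ for a nonzero $f$, and therefore $f$ annihilates $H_G^*(M \setminus C, \CF)$ (recall that $\Supp H$ is a proper subset of $\C^s$ exactly when $H$ is a torsion module). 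This gives that $H_G^*(M \setminus C, \CF)$ is a torsion module over $S(\fg^*)$, completing the proof.

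There is no real obstacle here: the corollary is a formal specialization of the preceding Proposition combined with the characterization of $C$ via generalized stabilizers and the elementary observation that a finite union of proper subspaces is a proper subvariety. The only point requiring the slightest care is translating between the geometric statement about supports and the algebraic statement about torsion, but this is immediate from the definitions of $\Supp$ and torsion recalled at the start of the section.
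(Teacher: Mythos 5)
Your proposal is correct and follows essentially the same route as the paper, which likewise treats the corollary as an immediate specialization of the preceding Proposition to $X = C$, rewriting the index set via $\Crit(\mu) = \{x \in M \mid \widetilde\fg_x = \fg\}$. One minor imprecision: your intermediate claim that $f$ itself annihilates $H_G^*(M\setminus C, \CF)$ does not follow from $\Supp H_G^*(M\setminus C,\CF) \subseteq V_f$ (that containment only forces a power of $f$ to annihilate a finitely generated module), but the claim is superfluous, since the equivalence you correctly invoke — the support is a proper subset of $\C^s$ if and only if the module is torsion — already completes the argument.
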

	
	The same holds for any $G\times \{\phi_t\}$-invariant 
	subset of $M \setminus C$ and, by exactness, for the relative 
	equivariant basic cohomology of any pair in $M \setminus C$. 
		
\begin{theorem}
	Denote by $i : C \hookrightarrow M$ the inclusion. Then the kernel and cokernel of $i^* : H^*_G(M, \CF) \rightarrow H^*_G(C, \CF)$ have support in $\bigcup_{\widetilde \fg_x \neq \fg} \widetilde \fg_x\otimes \C$. In particular, both $S(\fg^*)$-modules have the same rank, $\dim H^*(C, \CF)$, and $\ker i^*$ is exactly the module of torsion elements in $H_G(M, \CF)$.
\end{theorem}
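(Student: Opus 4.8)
The plan is to feed the long exact sequence of the pair $(M,C)$ into the support estimates already established. By Proposition \ref{prop LES cohom} we have the exact sequence
\[ \cdots \to H_G^k(M, C, \CF) \to H_G^k(M, \CF) \xrightarrow{i^*} H_G^k(C, \CF) \to H_G^{k+1}(M, C, \CF) \to \cdots, \]
and Proposition \ref{prop rel cohom iso comp supp} identifies $H_G^\bullet(M, C, \CF) \cong H_{G,c}^\bullet(M \setminus C, \CF)$, which by Corollary \ref{cor-torsion-module} has support in $\bigcup_{\widetilde\fg_x \neq \fg} \widetilde\fg_x \otimes \C$. By exactness, $\ker i^*$ is the image of $H_G^k(M, C, \CF) \to H_G^k(M, \CF)$, hence a quotient of the relative group, while $\coker i^*$ embeds as a submodule into $H_G^{k+1}(M, C, \CF)$. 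Since both submodules and quotient modules inherit the support, both $\ker i^*$ and $\coker i^*$ have support contained in $\bigcup_{\widetilde\fg_x \neq \fg} \widetilde\fg_x \otimes \C$. As each condition $\widetilde\fg_x \neq \fg$ cuts out a proper subspace, this is a proper algebraic subset of $\C^s$, so both modules are torsion.

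For the rank statement I would first compute $H_G(C, \CF)$ directly. On $C$ we have $\widetilde\fg_x = \fg$ for every $x$, so for each $\xi \in \fg$ the fundamental vector field $\xi_M$ is pointwise a multiple of the Reeb field $R$. Since every basic form is annihilated by $\iota_R$, it follows that $\iota_{\xi_M}$ vanishes identically on $\Omega(C, \CF)$; hence the Cartan differential on $\Omega_G(C, \CF)$ reduces to the ordinary de Rham differential acting on the form factor alone. Because $G$ is a connected torus, it acts trivially on $S(\fg^*)$ and on $H(C, \CF)$, and therefore
\[ H_G(C, \CF) \cong S(\fg^*) \otimes H(C, \CF), \]
a free $S(\fg^*)$-module of rank $\dim H^*(C, \CF)$.

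Rank is additive along exact sequences of finitely generated modules over the integral domain $S(\fg^*)$. Splitting the four-term exact sequence
\[ 0 \to \ker i^* \to H_G(M, \CF) \xrightarrow{i^*} H_G(C, \CF) \to \coker i^* \to 0 \]
and using that $\ker i^*$ and $\coker i^*$ have rank $0$, I obtain $\operatorname{rank} H_G(M, \CF) = \operatorname{rank} H_G(C, \CF) = \dim H^*(C, \CF)$. Finally, to identify $\ker i^*$ with the torsion submodule: it is torsion by the support estimate above, and conversely any torsion element $h \in H_G(M, \CF)$ maps to a torsion element $i^* h$ of the torsion-free module $H_G(C, \CF)$, forcing $i^* h = 0$, i.e.\ $h \in \ker i^*$.

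The main obstacle is the freeness of $H_G(C, \CF)$, on which the torsion characterization of $\ker i^*$ hinges; everything else is the formal long-exact-sequence-plus-support bookkeeping inherited from the classical Atiyah-Bott argument. The delicate point in establishing freeness is verifying that $\iota_{\xi_M}$ genuinely vanishes on basic forms over $C$: this uses the hypothesis $\widetilde\fg_x = \fg$ together with the fact that basic forms are Reeb-horizontal, and one must check that the pointwise proportionality $\xi_M \in \R R$ suffices even though the proportionality factor may vary over $C$.
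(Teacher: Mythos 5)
Your proposal is correct and takes essentially the same route as the paper: both arguments feed the long exact sequence of the pair $(M,C)$ into the support estimate (kernel as a quotient, cokernel as a submodule of $H_G(M,C,\CF)\cong H_{G,c}(M\setminus C,\CF)$), and both deduce the rank statement and the torsion characterization of $\ker i^*$ from the freeness of $H_G(C,\CF)\cong S(\fg^*)\otimes H^*(C,\CF)$. The only difference is that the paper simply asserts this freeness while you prove it, via the pointwise proportionality $\xi_M(x)\in\R R(x)$ on $C$ forcing $\iota_{\xi_M}$ to vanish on basic forms; your worry about the varying proportionality factor is harmless, since contraction is a pointwise operation, so your argument is complete.
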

\begin{proof} From the long exact sequence for the pair $(M, C)$, one sees immediately that $\ker i^\ast$ is a quotient module of $H_G(M, C, \CF)$, and that $\coker i^\ast$ is a sub-module of $H_G(M, C, \CF)$. 
But $H_G(M, C, \CF)$ is a torsion module with support in $\bigcup_{\widetilde \fg_x \neq \fg} \widetilde \fg_x\otimes \C$ by Corollary \ref{cor-torsion-module} and Proposition \ref{prop rel cohom iso comp supp}.
Since $H^*_G(C, \CF)= S^*(\fg^*)\otimes H^*(C, \CF)$ is a free $S^*(\fg^*)$-module, the rank statement follows and every torsion element has to be mapped to zero under $i^*$.
\end{proof}

\begin{proposition}
	The kernel and cokernel of the push forward $i_*\!\! :\! H_G(C, \CF) $ $\to H_G(M, \CF)$ have support in $\bigcup_{\widetilde \fg_x \neq \fg} \widetilde \fg_x\otimes \C$ and are therefore torsion. 
\end{proposition}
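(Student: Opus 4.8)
The plan is to establish this as the ``dual'' of the preceding theorem, replacing the long exact sequence of the pair $(M,C)$ used there by a Gysin-type sequence in which the roles of $i^\ast$ and $i_\ast$ are interchanged. Fix a $G\times T$-invariant tubular neighborhood $U$ of $C$. Combining the basic equivariant Thom isomorphism $H_{G,c}^\ast(U,\CF)\cong H_G^\ast(C,\CF)$ of Theorem \ref{thm-thom} with the long exact sequence of the pair $(M,M\setminus U)$ and the excision isomorphism $H_G^\ast(M,M\setminus U,\CF)\cong H_{G,c}^\ast(U,\CF)$ (the analogue, for the open set $U$, of Proposition \ref{prop rel cohom iso comp supp}), I obtain a long exact sequence whose first map is extension by zero. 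By construction \eqref{eq basic equiv pushforward}, this extension-by-zero map is exactly $i_\ast$ once $H_{G,c}^\ast(U,\CF)$ is identified with $H_G^\ast(C,\CF)$. Finally, since $M\setminus C$ retracts $G\times\{\phi_t\}$-equivariantly onto $M\setminus U$ by pushing radially outward in the invariant tubular neighborhood, Lemma \ref{lemma if homotopy equiv then cohom isom} gives $H_G^\ast(M\setminus U,\CF)\cong H_G^\ast(M\setminus C,\CF)$, so the sequence reads (with the degree shift by the codimension understood componentwise)
\[ \cdots \to H_G(C,\CF)\xrightarrow{\ i_\ast\ } H_G(M,\CF)\xrightarrow{\ r\ } H_G(M\setminus C,\CF)\xrightarrow{\ \partial\ } H_G(C,\CF)\to\cdots. \]

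With this sequence the conclusion is immediate and mirrors the preceding theorem. Exactness at $H_G(M,\CF)$ gives $\mathrm{im}\,i_\ast=\ker r$, so $\coker i_\ast\cong\mathrm{im}\,r$ is a \emph{sub}module of $H_G^\ast(M\setminus C,\CF)$; exactness at $H_G(C,\CF)$ gives $\ker i_\ast=\mathrm{im}\,\partial$, a \emph{quotient} of $H_G^\ast(M\setminus C,\CF)$. Since the support of a submodule or of a quotient module is contained in the support of the ambient module, and since $\Supp H_G^\ast(M\setminus C,\CF)\subseteq\bigcup_{\widetilde\fg_x\neq\fg}\widetilde\fg_x\otimes\C$ by Corollary \ref{cor-torsion-module}, both $\ker i_\ast$ and $\coker i_\ast$ have support in $\bigcup_{\widetilde\fg_x\neq\fg}\widetilde\fg_x\otimes\C$. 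As this is a finite union of proper subspaces of $\C^s$, hence a proper subset, both modules are torsion.

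The technical heart, and the step I expect to be the main obstacle, is the construction of the excision long exact sequence above in the basic equivariant setting, that is, the identification of the relative term of $(M,M\setminus U)$ with $H_{G,c}^\ast(U,\CF)$. This is the analogue for the open set $U$ of the isomorphism $H_G(M,C,\CF)\cong H_{G,c}(M\setminus C,\CF)$ of Proposition \ref{prop rel cohom iso comp supp}; the point requiring care is that $M\setminus U$ is a manifold with boundary rather than a closed submanifold as in Definition \ref{def rel cohom}, so I would reprove Propositions \ref{prop LES cohom} and \ref{prop rel cohom iso comp supp} for this complement by the same bump-function argument, using invariant collars and $G\times T$-invariant cutoff functions, and verify that all maps in the sequence are $S(\fg^\ast)$-module homomorphisms (clear, as they are induced by extension by zero, restriction, and the connecting homomorphism).

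Alternatively, one can deduce the statement directly from the preceding theorem together with the projection formula $i^\ast i_\ast=\,\cdot\,e_G(\nu C,\CF)$: after localizing $S(\fg^\ast)$ at any prime lying outside $\bigcup_{\widetilde\fg_x\neq\fg}\widetilde\fg_x\otimes\C$, the basic equivariant Euler class $e_G(\nu C,\CF)$ becomes invertible and $i^\ast$ becomes an isomorphism by the preceding theorem, which forces $i_\ast$ to be an isomorphism after this localization; hence $\ker i_\ast$ and $\coker i_\ast$ vanish off the bad set. This route avoids excision but instead requires verifying that the zero locus of $e_G(\nu C,\CF)$ is contained in $\bigcup_{\widetilde\fg_x\neq\fg}\widetilde\fg_x\otimes\C$, i.e.\ that the normal weights along $C$ cut out subspaces of the bad set.
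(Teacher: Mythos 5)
Your main argument is exactly the paper's proof: the paper likewise takes disjoint invariant tubular neighborhoods of the components of $C$, extends Definition \ref{def rel cohom} and Propositions \ref{prop LES cohom}, \ref{prop rel cohom iso comp supp} to the pair $(M, M\setminus U)$ (the same technical point you flag), identifies $H_G(M,M\setminus U,\CF)\cong H_{G,c}(U,\CF)\cong H_G(C,\CF)$ via Proposition \ref{prop rel cohom iso comp supp} and the Thom isomorphism, and reads off kernel and cokernel of $i_\ast$ from the long exact sequence as a quotient and a submodule of torsion modules supported in $\bigcup_{\widetilde\fg_x\neq\fg}\widetilde\fg_x\otimes\C$. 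Your aside about handling $M\setminus U$ versus $M\setminus C$ (via the equivariant retraction) is a cosmetic variant of the paper's remark that the support bound holds for any $G\times\{\phi_t\}$-invariant subset of $M\setminus C$, so no substantive difference there either.
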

\begin{proof}
	Let $U_j$ denote a sufficiently small invariant tubular neighborhood of the connected component $C_j \subset C$ such that $U_j \cap U_i = \varnothing$ for $i\neq j$ and set $U=\cup U_j$. Then $\partial U_j$ is a sphere bundle over $C_j$, in particular, a smooth manifold, and $G\times T$-invariant. 
	Note that Definition \ref{def rel cohom} and Propositions \ref{prop LES cohom} and \ref{prop rel cohom iso comp supp} extend to include closed subsets that are $G\times \{\phi_t\}$-invariant open submanifolds with invariant boundary. We consider the long exact sequence of the pair $(M, M\setminus U)$. By the Thom isomorphism, $H_G(C, \CF) \cong H_{G,c}(U, \CF)$ and $ H_{G,c}^*(U, \CF) \cong H^*_G(M, M \setminus U, \CF)$ by Proposition \ref{prop rel cohom iso comp supp}. 
	The long exact sequence then yields that $\ker \iota_*$ is the image of a torsion module with support in $\bigcup_{\widetilde \fg_x \neq \fg} \widetilde \fg_x\otimes \C$ 
	and that $\coker \iota_*$ is isomorphic to the image of $H^*_G(M, \CF)\to H^*_G(M \setminus U, \CF)$, a submodule of a torsion module with support in $\bigcup_{\widetilde \fg_x \neq \fg} \widetilde \fg_x\otimes \C$.
\end{proof}

From the preceding two statements, it follows that $i^* i_* :H_G(C, \CF)\rightarrow H_G(C, \CF)$ is an isomorphism modulo torsion.
As in \cite[Chapter~10.5]{GS99}, we obtain that $i^* i_* = e_G(\nu C,\CF)$ is the multiplication with the basic $G$-equi\-va\-ri\-ant Euler class of the normal bundle of $C$ (cf.~, e.g., \cite[Definition~11]{Casselmann2016}). Hence, $e_G(\nu C,\CF)$ is invertible in the localized module.

\begin{remark}
	Alternatively, it can be shown directly that $e_G(\nu C,\CF)$ is not a zero divisor in $H^*_G(C, \CF)$, see \cite[Lemma~13]{Casselmann2016}.
\end{remark}

\begin{theorem} \label{thm-duistermaat-heckman} For all $\eta \in H_G(M, \CF)$ we have the exact integration formula
\begin{equation*} \int_M \alpha \wedge \eta = \sum_{C_j \subseteq C} \int_{C_j} \frac{i_j^\ast(\alpha \wedge \eta)}{e_G(\nu C_j,\CF)}, \end{equation*}
where $C_j \subseteq C$ denote the connected components and  $i_j: C_j \hookrightarrow M$ their inclusions.
\end{theorem}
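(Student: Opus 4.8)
The plan is to run the standard Atiyah--Bott--Berline--Vergne argument, now that the module-theoretic input has been assembled: the basic equivariant pushforward $i_* : H_G(C,\CF) \to H_G(M,\CF)$ of Theorem \ref{thm-thom}, the identity $i^* i_* = e_G(\nu C,\CF)$, and the fact (from Corollary \ref{cor-torsion-module} and the two propositions immediately preceding this theorem) that $\ker i^*$, $\coker i^*$, $\ker i_*$ and $\coker i_*$ are all torsion $S(\fg^*)$-modules. First I would localize at the multiplicative set $S(\fg^*)\setminus\{0\}$, i.e.\ tensor the relevant modules with the fraction field $\mathrm{Frac}(S(\fg^*))$. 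Writing $H_G(C,\CF)_{\mathrm{loc}}$ and $H_G(M,\CF)_{\mathrm{loc}}$ for the localizations and working component by component on $C = \bigsqcup_j C_j$, the torsion statements show that $i^*$ and $i_*$ become isomorphisms, with $e_G(\nu C,\CF) = \sum_j e_G(\nu C_j,\CF)$ invertible (a non-zero-divisor on each $H_G(C_j,\CF)$). This yields the localization identity $\mathrm{id} = i_* \circ e_G(\nu C,\CF)^{-1}\circ i^*$ on $H_G(M,\CF)_{\mathrm{loc}}$.

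Next, fixing $\eta \in H_G(M,\CF)$, the class $\zeta := \eta - i_*\!\big(e_G(\nu C,\CF)^{-1} i^*\eta\big)$ vanishes in $H_G(M,\CF)_{\mathrm{loc}}$ by the localization identity, hence is torsion: some nonzero $f \in S(\fg^*)$ satisfies $f\zeta = 0$ in $H_G(M,\CF)$. Here I would invoke that the basic integration functional $\eta\mapsto\int_M\alpha\wedge\eta$ descends to $H_G(M,\CF)$ and is $S(\fg^*)$-linear, the descent being the equivariant analogue of the well-definedness of the basic Poincar\'e pairing (Lemma \ref{lemma-poincare}). Consequently $f\cdot\int_M\alpha\wedge\zeta = \int_M\alpha\wedge f\zeta = 0$ in the integral domain $S(\fg^*)$, and since $f\neq 0$ we conclude $\int_M\alpha\wedge\zeta = 0$. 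Therefore, in $\mathrm{Frac}(S(\fg^*))$,
\[ \int_M \alpha\wedge\eta \;=\; \int_M \alpha\wedge i_*\!\big(e_G(\nu C,\CF)^{-1} i^*\eta\big). \]

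It remains to evaluate the right-hand side on $C$. Setting $\beta := e_G(\nu C,\CF)^{-1} i^*\eta \in H_G(C,\CF)_{\mathrm{loc}}$, I would apply the projection formula for the basic equivariant pushforward proved above (the Proposition following \eqref{eq basic equiv pushforward}, extended $S(\fg^*)$-linearly to the localization) in its $\eta = 1$ instance, namely $\int_M \alpha\wedge i_*\beta = \int_C i^*\alpha \wedge \beta$. Splitting into connected components and using that $e_G(\nu C_j,\CF)$ is a basic class on $C_j$ gives
\[ \int_M \alpha\wedge i_*\beta \;=\; \sum_{C_j\subseteq C} \int_{C_j} i_j^*\alpha \wedge \frac{i_j^*\eta}{e_G(\nu C_j,\CF)} \;=\; \sum_{C_j\subseteq C} \int_{C_j} \frac{i_j^*(\alpha\wedge\eta)}{e_G(\nu C_j,\CF)}, \]
which combines with the previous display to give the claimed formula.

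The step needing the most care is the bookkeeping around the non-basic factor $\alpha$ in the projection formula: because $\alpha$ is not basic and $\alpha\wedge\eta$ is not equivariantly closed, one must read $\int_M$ throughout as the basic integration $\int_M\alpha\wedge(\cdot)$ and confirm that the Proposition supplies exactly $\int_M\alpha\wedge i_*\beta = \int_C i_C^*\alpha\wedge\beta$, the restriction $i_C^*\alpha$ being the contact form used to integrate on $C$. A secondary point worth recording is that, although the individual summands on the right live in $\mathrm{Frac}(S(\fg^*))$ and carry poles along the loci where $e_G(\nu C_j,\CF)$ degenerates, the left-hand side $\int_M\alpha\wedge\eta$ is an honest element of $S(\fg^*)$; the equality therefore forces these poles to cancel, so the result is an exact identity of polynomials rather than merely of rational functions.
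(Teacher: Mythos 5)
Your proposal is correct and follows essentially the same route as the paper: module-theoretic localization (torsion kernels and cokernels, $i^*i_* = e_G(\nu C,\CF)$, inversion over $\mathrm{Frac}(S(\fg^*))$) followed by evaluation of $\int_M \alpha \wedge i_*(\cdot)$ via the Thom-form projection formula, and your explicit torsion argument for why $\int_M \alpha \wedge \big(\eta - i_* Q\eta\big) = 0$ is in fact a welcome elaboration of the step the paper asserts without comment in \eqref{eqn-q-integration}. The one point you flag but leave unconfirmed --- that the projection formula survives insertion of the non-basic factor $\alpha$, i.e. $\int_M \alpha \wedge i_*\beta = \int_{C} i^*\alpha \wedge \beta$ --- is precisely what the paper's displayed computation establishes, and the confirmation is short: since $p_j$ is $G\times T$-equivariant it intertwines the Reeb fields, so $\alpha - p_j^* i_j^*\alpha$ is a basic $1$-form, hence $(\alpha - p_j^* i_j^*\alpha)\wedge p_j^*\beta \wedge \tau_j$ is a basic form of top degree $2n+1$ and vanishes pointwise, after which Lemma \ref{lemma-pushforward} and $(p_j)_*\tau_j = 1$ give exactly the identity you need.
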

\begin{proof} 
 The inverse of $i_*$ on the localized module is given by
$Q := \sum_{C_j \subseteq C} \frac{i_j^*}{e_G(\nu C_j,\CF)}$.
We therefore obtain for every $\eta \in H_G(M, \CF)$ 
\begin{equation} \label{eqn-q-integration} \int_M \alpha \wedge \eta = \int_M \alpha \wedge i_\ast Q \eta. \end{equation}
Now, using the definition of $i_\ast$ in terms of Thom forms we can express $\eta$ as
\begin{equation} \label{eqn-q-integration2}
\eta = i_\ast Q \eta = \sum_j (i_j)_\ast \frac{i_j^\ast \eta}{e_G(\nu C_j,\CF)} = \sum_j p_j^\ast \left( \frac{i_j^\ast \eta}{e_G(\nu C_j, \CF)} \right) \wedge \tau_j, \end{equation}
where $\tau_j$ is an equivariant basic Thom form compactly supported in a small $G\times T$-invariant tubular neighborhood $U_j$ of $C_j$, $p_j: U_j \to C_j$ is the projection. By Lemma \ref{lemma-bundle-cohomology}, we have
\begin{align*}
\int_{U_j} \alpha \wedge p_j^\ast \left(\frac{i_j^\ast \eta}{e_G(\nu C_j,\CF)} \right) \wedge \tau_j  
 &= \int_{U_j} p_j^\ast \left( \frac{i_j^\ast(\alpha \wedge \eta)}{e_G(\nu C_j,\CF)} \right) \wedge \tau_j \\
 &= \int_{C_j}  \frac{ i_j^\ast(\alpha \wedge \eta)}{e_G(\nu C_j,\CF)} \wedge (p_j)_\ast \tau_j.
\end{align*}
Since $(p_j)_\ast \tau_j = 1$, we obtain the desired integration formula by summing over $j$ and using the identites \eqref{eqn-q-integration}-\eqref{eqn-q-integration2}.
\end{proof}

\section{Equivariant Integration Formulae}

\subsection{Equivariant integration}
Following an idea of Witten \cite{WittenNL}, Jeffrey and Kirwan \cite{JeffreyKirwan95} proved analogues of Theorems \ref{thm-integration} and \ref{thm-residue} for symplectic quotients. 
By far the most important ingredient in their proof is the Atiyah-Bott-Berline-Vergne integration formula \cite{AtiyahBottMomentMap, BerlineVergne}, as this essentially allows the problem to be reduced to studying the properties of Gaussian integrals over the vector space $\fg$. 
Armed with our localization formula (Theorem \ref{thm-localization}) and the local normal form of the moment map (Proposition \ref{prop-normal-form}), we will obtain the $K$-contact analogues, Theorems \ref{thm-integration} and \ref{thm-residue}, by the same line of argumentation as Jeffrey-Kirwan.

 Let $\eta$ be a form representing a class in $H_G(M, \CF)$ and denote by $\Pi_\ast: H_G(M, \CF) \to H_G$ the basic equivariant pushforward 
$\Pi_\ast \eta = \int_M \alpha \wedge \eta$. 
We will apply $\Pi_*$ to classes of type $\eta \wedge e^{id_G\alpha}$, which are not equivariant basic cohomology classes according to our definition, since they are not polynomial but analytic in $\phi$. This is well defined, provided one replaces the codomain with a suitable completion of $H_G$.
With this in mind, for any closed equivariant basic form $\eta$, with $s=\dim \fg$ and $\epsilon >0$, we consider the integral
\begin{equation*} \label{eqn-ieps-defn}
  I^\eta(\epsilon) = \frac{1}{(2\pi i)^s \vol(G)} \int_{\fg} e^{-\epsilon|\phi|^2/2} (\Pi_\ast (\eta  \wedge  e^{id_G\alpha}))(\phi) d\phi,
\end{equation*}
where $d\phi$ is a measure on $\fg$ corresponding to a metric on $\fg$ that induces a volume form $\vol_G$ on $G$, $\vol(G)=\int_G \vol_G$. Then $d\phi/\vol (G)$ is independent of that choice.
Note that $I^\eta(\epsilon)$ is well defined; $\eta \wedge e^{id_G\alpha}$ is only of mild exponential dependence on $\phi$ so that the factor $e^{-\epsilon|\phi|^2/2}$ ensures convergence of the integral.

Following Jeffrey-Kirwan, we will relate the $\epsilon \to 0$ asymptotics of $I^\eta(\epsilon)$ to intersection pairings on the contact quotient. First, we must rewrite $I^\eta(\epsilon)$ in a more convenient form. 
For any (tempered) distribution on $\fg$, introduce the Fourier transform
\begin{equation*}
  (\BF f)(z) = (2\pi)^{-s/2} \int_\fg f(\phi) e^{-iz(\phi)} d\phi.
\end{equation*}
By definition, $\BF(f)$ is naturally a distribution on $\fg^\ast$.  Set 
\begin{equation*}
  Q^\eta(y) = \BF\left[ \Pi_\ast (\eta \wedge  e^{id_G\alpha}) \right](y).
\end{equation*}
Let $g_\epsilon$ denote the Gaussian function $g_\epsilon(\phi) = e^{-\epsilon |\phi|^2 / 2}$, with Fourier transform $(\BF g_\epsilon)(z) = \epsilon^{-s/2} g_{\epsilon^{-1}}(z).$
Note that $I^\eta(\epsilon)$ can be viewed as the $L^2$ inner product of the functions $g_\epsilon(\phi)$ and $\Pi_\ast(\eta \wedge  e^{id_G\alpha})(\phi)$. Since the Fourier transform is an $L^2$ isometry, we have the following identity.

\begin{lemma} \label{lemma-ie-fourier}
\[ I^\eta(\epsilon) = \frac{1}{(2\pi i)^s \epsilon^{s/2} \vol(G)} \int_{\fg^*} Q^\eta(y) e^{-|y|^2 /2\epsilon} dy. \]
\end{lemma}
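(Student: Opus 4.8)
The plan is to use the Plancherel theorem (that the Fourier transform is an $L^2$ isometry) together with the explicit Fourier transform of the Gaussian, and then carefully track the constants that arise from the conventions chosen for $\BF$ and for the measure $d\phi$. First I would observe that the integrand defining $I^\eta(\epsilon)$ factors as a product of two functions of $\phi$: the Gaussian $g_\epsilon(\phi) = e^{-\epsilon|\phi|^2/2}$ and $\Pi_\ast(\eta \wedge e^{id_G\alpha})(\phi)$. Writing the integral as the $L^2(\fg)$ pairing $\langle g_\epsilon, \overline{\Pi_\ast(\eta \wedge e^{id_G\alpha})}\rangle$ (up to the prefactor), I can invoke Plancherel to convert it into an integral over $\fg^\ast$ of the product $(\BF g_\epsilon)(y)$ against $Q^\eta(y) = \BF[\Pi_\ast(\eta \wedge e^{id_G\alpha})](y)$.

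The key computation is substituting the stated Fourier transform $(\BF g_\epsilon)(z) = \epsilon^{-s/2} g_{\epsilon^{-1}}(z) = \epsilon^{-s/2} e^{-|z|^2/2\epsilon}$. Once this is inserted, the $\epsilon$-dependence collapses to the factor $\epsilon^{-s/2}$ pulled out front and the Gaussian kernel $e^{-|y|^2/2\epsilon}$ inside the integral, which matches exactly the right-hand side claimed in the lemma. The prefactor $\frac{1}{(2\pi i)^s \vol(G)}$ is simply carried along from the definition of $I^\eta(\epsilon)$, and the new $\epsilon^{-s/2}$ is the only additional factor, so the target normalization $\frac{1}{(2\pi i)^s \epsilon^{s/2} \vol(G)}$ appears as required.

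The step I expect to require the most care, and hence the main obstacle, is the precise bookkeeping of the $(2\pi)$-factors. Plancherel's theorem in the normalization $(\BF f)(z) = (2\pi)^{-s/2}\int_\fg f(\phi) e^{-iz(\phi)}\,d\phi$ is an isometry with no extra constants, so $\int_\fg f \bar g\, d\phi = \int_{\fg^\ast} (\BF f)(\overline{\BF g})\, dy$; one must confirm that this symmetric normalization is exactly the one used to define $\BF$ above and that the measures $d\phi$ on $\fg$ and $dy$ on $\fg^\ast$ are dual in the sense Plancherel demands. A subtlety worth verifying is that $\Pi_\ast(\eta \wedge e^{id_G\alpha})(\phi)$ is real enough (or that one tracks complex conjugates correctly) for the $L^2$-pairing interpretation to be literally valid; since $\eta \wedge e^{id_G\alpha}$ is analytic rather than merely $L^2$, one really works at the level of tempered distributions, so the isometry should be read as the distributional Parseval identity $\int g_\epsilon \cdot h = \int \BF(g_\epsilon)\cdot \BF(h)$ paired against the (smooth, rapidly decaying) Gaussian. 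Because $g_\epsilon$ is Schwartz, this pairing is unambiguous and no regularization is needed. I would therefore present the proof as a one-line application of the distributional Parseval identity followed by substitution of $\BF g_\epsilon$, flagging only that the constant matches because of the chosen symmetric Fourier normalization.
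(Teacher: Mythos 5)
Your proposal is correct and is essentially identical to the paper's argument: the paper proves this lemma by exactly the remark that $I^\eta(\epsilon)$ is the $L^2$-pairing of $g_\epsilon$ with $\Pi_\ast(\eta\wedge e^{id_G\alpha})$, invoking Plancherel together with $(\BF g_\epsilon)(z)=\epsilon^{-s/2}g_{\epsilon^{-1}}(z)$ and carrying the prefactor along. Your extra care about the distributional reading of the pairing is a fine addition; note only that the conjugation-free identity $\int g_\epsilon\cdot h=\int \BF(g_\epsilon)\cdot\BF(h)$ is justified here because the Gaussian is real and even (not merely because it is Schwartz), which is what makes the conjugation bookkeeping harmless.
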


\begin{lemma}\label{lemma Q compact supp}
	The distribution $Q^\eta(y)=\BF\left[ \Pi_\ast (\eta \wedge  e^{id_G\alpha}) \right](y)$ can be expressed as follows 
	\begin{align*}
		Q^\eta(y) =(2\pi)^{s/2} \sum_J i^J \frac{\partial}{\partial y^J} \int_M \alpha \wedge \eta_J \wedge  e^{id\alpha} \delta(-\mu-y),
	\end{align*}
	where $\eta= \sum_J \eta_J y^J$, summing over multi indices $J$, with $y^j$ denoting an or\-tho\-nor\-mal basis of $\fg^*$ and $\eta_J \in \Omega^*(M, \CF)$, and $\delta$ denotes the Dirac delta distribution.
	In particular, $Q^\eta(y)$ is supported in the compact set $-\mu(M)$.
\end{lemma}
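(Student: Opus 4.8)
The plan is to compute the Fourier transform $\BF$ explicitly and to read the support off the Dirac deltas that appear. Before computing, I would record that $Q^\eta$ is a well-defined tempered distribution: since $\ev{\mu,\phi}$ is real and $M$ is compact, each coefficient $\int_M\alpha\wedge\eta_J\wedge e^{id\alpha}\,e^{-i\ev{\mu,\phi}}$ is bounded and smooth in $\phi$, so $\Pi_\ast(\eta\wedge e^{id_G\alpha})(\phi)$ is a polynomial times a bounded smooth function, hence tempered, and $\BF$ of it is defined.

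For the computation, first use the definitions of $d_G$ and of the moment map to factor $e^{id_G\alpha(\phi)}=e^{id\alpha}e^{-i\ev{\mu,\phi}}$, since $d_G\alpha(\phi)=d\alpha-\iota_{\phi_M}\alpha=d\alpha-\ev{\mu,\phi}$. Writing $\eta=\sum_J\eta_J y^J$ gives $\eta(\phi)=\sum_J\phi^J\eta_J$, so that
\[ \Pi_\ast(\eta\wedge e^{id_G\alpha})(\phi)=\sum_J\phi^J\int_M\alpha\wedge\eta_J\wedge e^{id\alpha}\,e^{-i\ev{\mu,\phi}}. \]
Applying $\BF$ and interchanging the integration over $\fg$ with the integration over $M$ reduces everything to the elementary identity $\int_\fg\phi^J e^{-iw(\phi)}\,d\phi=(2\pi)^s i^J\frac{\partial}{\partial w^J}\delta(w)$, which follows from $\phi^J e^{-iw(\phi)}=i^J\frac{\partial}{\partial w^J}e^{-iw(\phi)}$ and the Fourier representation $\int_\fg e^{-iw(\phi)}\,d\phi=(2\pi)^s\delta(w)$. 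Substituting $w=\mu+y$, so that $\partial/\partial w=\partial/\partial y$, pulling the $y$-derivatives out of the now-external $M$-integral, and using that $\delta$ is even to replace $\delta(\mu+y)$ by $\delta(-\mu-y)$, collects the prefactors into $(2\pi)^{s/2}$ and yields the stated formula.

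The support statement is then immediate. Viewed as a distribution in $y$ parametrised by the point $x\in M$, $\delta(-\mu-y)$ is concentrated at $y=-\mu(x)$, so $\int_M\alpha\wedge\eta_J\wedge e^{id\alpha}\,\delta(-\mu-y)$ is supported in $-\mu(M)$; taking $y$-derivatives does not enlarge the support, and $-\mu(M)$ is compact because $M$ is compact and $\mu$ is continuous.

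I expect the only genuine obstacle to be the rigorous justification of the interchange of integrations and of the delta-function manipulations, since $Q^\eta$ is merely a distribution. I would make this precise by pairing both sides against an arbitrary Schwartz test function $\psi$ on $\fg^\ast$ and using $\ev{\BF f,\psi}=\ev{f,\BF\psi}$: the left-hand side becomes an absolutely convergent double integral over $\fg\times M$, convergence being ensured by the compactness of $M$ together with the rapid decay of $\BF\psi$, so Fubini applies; the right-hand side becomes, after integrating the deltas against $\psi$, a sum over $J$ of $M$-integrals of derivatives of $\psi$ evaluated at the points $-\mu(x)$. Checking that these two expressions agree is then a routine, term-by-term verification.
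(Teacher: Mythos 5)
Your proposal is correct and follows essentially the same route as the paper's proof: expand $\eta(\phi)=\sum_J\eta_J\phi^J$, factor $e^{id_G\alpha(\phi)}=e^{id\alpha}e^{-i\ev{\mu,\phi}}$, interchange the $M$- and $\fg$-integrations, and convert $\phi^J$ into $i^J\partial/\partial y^J$ acting on the Fourier representation $(2\pi)^s\delta(-\mu-y)$ of the oscillatory integral, with the support statement read off from the delta. Your additional justification via pairing against Schwartz test functions is a sound way to make rigorous the formal delta-function manipulations that the paper leaves implicit, but it does not constitute a different argument.
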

\begin{proof}
We make use of the arguments given in \cite[\S~5,~7]{JeffreyKirwan95}. Write $\eta(\phi) = \sum_J \eta_J \phi^J$ with $\phi^j$ denoting the coordinate functions $y^j(\phi)$. Recalling the definition of $Q^\eta(y)$, we have
\allowdisplaybreaks
\begin{align*}
  Q^\eta(y) &=\BF\left[ \Pi_\ast (\eta \wedge  e^{id_G\alpha}) \right](y)\\
  &= \frac{1}{(2\pi)^{s/2}}\sum_J \int_M \int_\fg \alpha \wedge \eta_J \phi^J \wedge  e^{id\alpha - i\ev{\mu, \phi} - i \ev{y,\phi}} d\phi \\
  &= \frac{1}{(2\pi)^{s/2}} \sum_J i^J \frac{\partial}{\partial y^J} \int_M \alpha \wedge \eta_J \wedge  e^{id\alpha} \int_\fg  e^{i\ev{-\mu-y, \phi}} d\phi \\
  &= (2\pi)^{s/2} \sum_J i^J \frac{\partial}{\partial y^J} \int_M \alpha \wedge \eta_J \wedge  e^{id\alpha} \delta(-\mu-y).
\end{align*}
This shows that $Q^\eta(y)$ is the integral over $x \in M$ of a distribution $S(x,y)$ on $M \times \fg$ which is supported on the set $\{(x,y)  \ | \  - \mu(x)=y \}$. 
\end{proof}

\begin{proposition} \label{prop-fourier-polynomial}
The distribution $Q^\eta(y)$ may be represented by a piecewise polynomial function.
\end{proposition}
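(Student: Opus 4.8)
The plan is to use the explicit formula for $Q^\eta(y)$ from Lemma \ref{lemma Q compact supp} together with the local normal form of the moment map (Proposition \ref{prop-normal-form}) to reduce the problem to a computation near $\mu^{-1}(0)$, and then to invoke the localization formula (Theorem \ref{thm-localization}) to handle the structure away from the critical set. Since the question of being piecewise polynomial is local in $y \in \fg^\ast$, I would first observe that $Q^\eta$ is supported in the compact set $-\mu(M)$ and that it suffices to establish piecewise polynomiality in a neighborhood of each point $y_0 \in \fg^\ast$. The key distinction is between regular values and critical values of $-\mu$: away from the critical values the level sets $\mu^{-1}(-y)$ vary smoothly, and I expect $Q^\eta$ to be genuinely polynomial there, while the walls where polynomiality can jump are precisely the images $\mu(C_j)$ of the components of $C = \Crit\mu$.

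First I would treat a regular value $y_0$. Near such a point, the set $\{x : -\mu(x) = y\}$ is a smooth family of submanifolds, and the defining expression
\[ Q^\eta(y) = (2\pi)^{s/2} \sum_J i^J \frac{\partial}{\partial y^J} \int_M \alpha \wedge \eta_J \wedge e^{id\alpha}\, \delta(-\mu - y) \]
can be analyzed by writing the delta-distribution pairing as integration over the level set, using a smooth coarea/fiber-integration argument. Because $\eta_J$, $\alpha$, and $d\alpha$ are smooth and the fibration $\mu^{-1}(-y) \to \fg^\ast$ is locally trivial away from critical values, the resulting integral depends smoothly on $y$; the polynomial (rather than merely smooth) character comes from the fact that the integrand's $y$-dependence enters only through the differentiation $\partial/\partial y^J$ acting on a fixed smooth density, which I would argue collapses to a finite Taylor expansion because the basic cohomology is concentrated in degrees $\le 2n$ (Lemma \ref{lemma-poincare}), bounding the multi-index $J$.

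The main obstacle will be the behavior across the critical walls, i.e. near $y_0 \in \mu(C_j)$, where the level set degenerates and the naive fiber-integration breaks down. Here I would use the localization formula of Theorem \ref{thm-localization}: applying it to the class $\eta \wedge e^{id_G\alpha}$ expresses $\int_M \alpha \wedge \eta \wedge e^{id_G\alpha}$ as a sum of contributions localized at the components $C_j$, each weighted by $e^{-i\langle \mu(C_j), \phi\rangle}$ and divided by the equivariant basic Euler class $e_G(\nu C_j, \CF)$. Taking Fourier transforms term by term, each localized contribution produces a distribution supported on a single affine subspace through $\mu(C_j)$, and the jump in $Q^\eta$ across the wall is governed by the pole structure of $1/e_G(\nu C_j, \CF)$; the piecewise-polynomial conclusion then follows because the inverse Fourier transform of a rational function in $\phi$ whose denominator is a product of linear forms (the weights of the normal bundle action) is piecewise polynomial, a fact established in this setting by Jeffrey-Kirwan \cite{JeffreyKirwan95}.

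The hardest step is making rigorous the passage from the localized rational expression in $\phi$ to a piecewise-polynomial function in $y$ via Fourier transform, since this requires controlling the convergence and matching the distributional contributions of the different $C_j$ so that they patch into a single function with polynomial pieces separated by the hyperplane arrangement $\{\mu(C_j)\}$. I would handle this by following the Fourier-analytic argument of \cite[\S 5--7]{JeffreyKirwan95} essentially verbatim, substituting our basic Euler classes and the local normal form of Proposition \ref{prop-normal-form} for their symplectic counterparts; the contact factor $\alpha$ and the $R$-basic structure do not interfere because all the relevant forms are $\CF$-basic and the integration $\int_M \alpha \wedge (-)$ is precisely the basic Poincar\'e pairing of Lemma \ref{lemma-poincare}.
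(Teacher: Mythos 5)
Your second half (localization plus the Jeffrey--Kirwan Fourier analysis) is the route the paper actually takes, but there is a genuine gap at the step you treat as automatic: you assert that the denominator $e_G(\nu C_j,\CF)$ is a product of linear forms ``(the weights of the normal bundle action)'' and that one can then substitute the basic Euler classes into \cite[\S\S 5--7]{JeffreyKirwan95} verbatim. In the symplectic setting this factorization is immediate because the localization runs over genuine $G$-fixed components, so the normal bundle is a $G$-equivariant bundle over a trivial $G$-space and the splitting principle hands you weight line bundles. Here the components $C_j$ of $\Crit\mu$ are \emph{not} $G$-fixed: by Proposition \ref{prop crit mu} they are unions of $1$-dimensional $G\times T$-orbits, on which $G$ acts nontrivially in the Reeb direction. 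Consequently the splitting principle cannot be applied to the $G$-action on $\nu C_j$ directly. The paper's proof spends most of its effort exactly here: it introduces the codimension-one isotropy torus $(G\times T)_{C_j}$, applies the splitting principle to the $(G\times T)_{C_j}$-equivariant bundle $\nu C_j$ to get $(2\pi)^{d/2}e_{(G\times T)_{C_j}}(\nu C_j,\CF)=\prod_i(c_i^j+\beta_i^j)$, then uses the Pfaffian expression with a \emph{basic} connection (so that $\iota_R\theta=0$) to show that extending the weights by zero on $\R R$ recovers $e_{G\times T}(\nu C_j,\CF)$, and finally restricts to $\fg$ to obtain $(2\pi)^{d/2}e_G(\nu C_j,\CF)=\prod_i(c_i^j+\beta_i^j|_\fg)$. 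Without this argument the claim that the localized contributions are ``polynomial times exponential over products of linear forms in $\phi$'' --- the input required by \cite[Lemma~2.2, Theorem~4.2]{JeffreyKirwan95} --- is unjustified, and this is precisely the new content needed in the basic/contact setting.

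Two smaller points. First, your opening reduction (regular values via coarea/fiber integration) is both superfluous --- the localization--Fourier argument yields piecewise polynomiality everywhere, not just across walls --- and unsound as stated: the bound on basic cohomological degree controls the multi-index $J$ (i.e.\ the number of terms $\eta_J$), not the Taylor expansion in $y$ of the fiber integral, so it cannot ``collapse'' a smooth chamber-wise function to a polynomial. Second, the Fourier transform of a term $e^{-i\ev{\mu(C_j),\phi}}/\prod_i\beta_i^j(\phi)$ is supported on an affine translate of the \emph{cone} spanned by the weights, not on ``a single affine subspace''; this is what produces the chamber structure.
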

\begin{proof}
 
Let $C_j$ denote a connected component of the critical set $C=\operatorname{Crit} \mu$ of codimension $d$. 
By Proposition \ref{prop crit mu}, $G\times T$ acts in $R$-direction only and its isotropy $(\fg\times \ft)_{C_j}$ has codimension 1. Let $\theta$ be a $G\times T$-invariant, basic connection form on the bundle of oriented orthonormal frames of $\nu C_j$ and denote by $F^\theta$ its (ordinary) curvature. 
Choose a basis $(X_i)$ of $\fg\times \ft$ such that $X_1, ..., X_{r-1}$ is a basis of $(\fg\times \ft)_{C_j}$ and $X_r=R$. Denote its dual basis by $u_i$. Then, since $\theta$ is basic, $\iota_{X_r}\theta=0$. The basic $G\times T$-equivariant Euler form is then given by
\begin{align}
	e_{G\times T}(\nu C_j,\CF) &= \operatorname{Pf}\left(F^\theta - \sum_i \iota_{X_i}\theta u_i\right) =\operatorname{Pf}\left(F^\theta - \sum_{i=1}^{r-1} \iota_{X_i}\theta u_i\right).\label{eq e GxT}
\end{align}
 Denote by $(G \times T)_{C_j}\subset G\times T$ the subtorus that has $(\fg\times \ft)_{C_j}$ as Lie algebra. $\nu C_j$ is a $(G \times T)_{C_j}$-equivariant vector bundle over $C_j$. 
 By the splitting principle for equivariant bundles, we may assume that the normal bundle splits as a direct sum of line bundles $\nu C_j = \oplus_i L_i$ and $(G \times T)_{C_j}$ acts on $L_i$ with weight $\beta^j_i$. 
 Then the basic $(G \times T)_{C_j}$-equivariant Euler form factors as $e_{(G \times T)_{C_j}}(\nu C_j,\CF) = \prod_i e_{(G \times T)_{C_j}}(L_i,\CF)$ and $2\pi e_{(G \times T)_{C_j}}(L_i,\CF) = c^j_i + \beta^j_i$, where $c^j_i \in \Omega^2(C_j,\CF)$ is the (ordinary) basic Euler form of $L_i$. Hence,
  \begin{align}
 	(2\pi)^{d/2}e_{(G \times T)_{C_j}}(\nu C_j,\CF) &=\prod_{i=1}^{d/2} (c^j_i + \beta^j_i). \label{eq e GxT Cj}
 \end{align}
 But we can also compute $e_{(G \times T)_{C_j}}(\nu C_j,\CF)$ as $\operatorname{Pf}\left(F^\theta - \sum_{i=1}^{r-1} \iota_{Y_i}\theta b_i\right)$, where $(Y_i)$ denotes a basis of $(\fg\times \ft)_{C_j}$ and $(b_i)$ its dual basis.
\eqref{eq e GxT} yields that if we extend $e_{(G \times T)_{C_j}}(\nu C_j,\CF)$ to all of $\fg \times \ft$ by setting it equal to 0 on $\R R$, we obtain $e_{G \times T}(\nu C_j,\CF)$. 
Hence, extending $\beta^j_i \in (\fg\times \ft)_{C_j}^*$ 
and combining \eqref{eq e GxT} and \eqref{eq e GxT Cj} yields
\begin{align}
	(2\pi)^{d/2}e_{G\times T}(\nu C_j,\CF) = \prod_{i=1}^{d/2}(c^j_i + \beta^j_i).\label{eq e GxT is prod}
\end{align}
The definition of the Euler form yields that $e_{G}(\nu C_j,\CF)$ is exactly given by the restriction of $e_{G\times T}(\nu C_j,\CF)$ to $\fg$ so that, by \eqref{eq e GxT is prod},
\begin{align}
	(2\pi)^{d/2}e_G(\nu C_j,\CF) = \prod_{i=1}^{d/2}(c^j_i + \beta^j_i|_{\fg}).\label{eq e G is prod}
\end{align}
By Theorem \ref{thm-localization}, we have that 
\begin{equation}
  \Pi_\ast ( \eta \wedge  e^{id_G\alpha}) = \sum_{j} \int_{C_j} \frac{i_j^\ast(\alpha \wedge \eta  \wedge e^{id_G\alpha})}{e_G(\nu C_j,\CF)}.\label{eq PI eta e idG alpha as sum}
\end{equation}
It now follows with \eqref{eq e G is prod}, by the same argument as \cite[Lemma 2.2]{JeffreyKirwan95}, that the pushforward may be written as a sum
\begin{align} \Pi_\ast(\eta \wedge  e^{id_G\alpha})(\phi) = \sum_{j} \sum_{a \in \CA_j}\frac{e^{-i\mu(C_j)(\phi)}\int_{C_j} i_j^*(\alpha \wedge \eta(\phi) \wedge  e^{id\alpha})c_{j,a}}{\prod_{i} (\beta_{i}^j|_\fg)(\phi)^{n_{j,i}(a)}},\label{eq pushforward sum}
\end{align}
where $\CA_j$ is a finite indexing set, $c_{j,a}\in H^*(C_j,\CF)$ is determined by the $c^j_i$, and $n_{j,i}(a)$ is a non-negative integer.
In particular, for every $(j,a)$, the term on the right hand side of Equation \eqref{eq pushforward sum} is given by the product of $\frac{e^{-i\mu(C_j)(\phi)}}{(\prod_i \beta_i^j|_\fg)(\phi)^{n_{j,i}(a)}}$ with a polynomial in $\phi$, where the polynomial is simply a constant if $\eta = 1$.
Given this description of the pushforward, the piecewise polynomial property of $Q^\eta(y)$ for $\eta=1$ follows from the same argument as \cite[Theorem~4.2]{JeffreyKirwan95}, making use of Lemma \ref{lemma Q compact supp}. 
For arbitrary $\eta$, it follows from the case $\eta=1$, noting that every $(j,a)$-summand contributes a piecewise polynomial function, applying that - up to a factor of $(-i)$ - Fourier transformation interchanges differentiation and multiplication by a coordinate, cf.~\cite[Lemma~7.1.3]{hoermander}.
\end{proof}

\subsection{Asymptotic analysis}

By Lemma \ref{lemma-ie-fourier} and Proposition \ref{prop-fourier-polynomial}, we are reduced to estimating the asymptotics of an integral of the form
$I(\epsilon)=\int e^{-|y|^2 / 2\epsilon} Q(y) dy$,
where $Q(y)$ is piecewise polynomial. Suppose that $Q(y)$ is regular near the origin, and let $Q_0(y)$ denote the polynomial which agrees with $Q(y)$ near the origin. Set
\begin{equation*}
  I_0(\epsilon) = \frac{1}{(2\pi i)^s \epsilon^{s/2} \vol (G)} \int_{\fg^\ast} Q_0(y) e^{-|y|^2 /2\epsilon} dy.  
\end{equation*}

\begin{lemma} \label{lemma-polynomial-asymptotics} Suppose that $Q(y)$ is regular near the origin and define $I(\epsilon)$ and $I_0(\epsilon)$ as above. Then we have the asymptotic
\[ |I(\epsilon) - I_0(\epsilon)| = o(\epsilon^{-s/2} e^{-c/\epsilon}) \]
for some constant $c > 0$.
\end{lemma}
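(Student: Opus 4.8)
The plan is to exploit what \emph{regularity of $Q$ near the origin} means: $Q$ coincides with the single polynomial $Q_0$ on an open ball $B_\delta = \{y \in \fg^\ast \mid |y| < \delta\}$ of some radius $\delta > 0$. Hence the difference $Q - Q_0$ vanishes identically on $B_\delta$, and the integrand of $I(\epsilon) - I_0(\epsilon)$ is supported in the region $\{|y| \ge \delta\}$, bounded away from the origin. This separation from $0$ is exactly what will produce the Gaussian exponential smallness $e^{-c/\epsilon}$. Carrying the common normalization factor $\tfrac{1}{(2\pi i)^s \epsilon^{s/2}\vol(G)}$ (the displayed $I(\epsilon)$ in the text being schematic), I would begin from
\[ I(\epsilon) - I_0(\epsilon) = \frac{1}{(2\pi i)^s \epsilon^{s/2}\vol(G)} \int_{\{|y| \ge \delta\}} \bigl(Q(y) - Q_0(y)\bigr)\, e^{-|y|^2/2\epsilon}\, dy. \]

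The estimate then proceeds in three elementary steps. First, since $Q$ is piecewise polynomial and, by Lemma \ref{lemma Q compact supp}, compactly supported, it is bounded, while $Q_0$ is a genuine polynomial; thus there are constants $C > 0$ and $k \in \mathbb{N}$ with $|Q(y) - Q_0(y)| \le C(1 + |y|)^k$ for all $y$. Second, I would extract the exponential factor on the region of integration: for $|y| \ge \delta$ one has $\tfrac{|y|^2}{2\epsilon} \ge \tfrac{\delta^2}{4\epsilon} + \tfrac{|y|^2}{4\epsilon}$, so that $e^{-|y|^2/2\epsilon} \le e^{-\delta^2/4\epsilon}\, e^{-|y|^2/4\epsilon}$. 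Third, the surviving Gaussian integral $\int_{\fg^\ast}(1+|y|)^k e^{-|y|^2/4\epsilon}\, dy$ is $O(\epsilon^{s/2})$ as $\epsilon \to 0^+$, as seen by the rescaling $y = 2\sqrt{\epsilon}\, u$. Combining the three steps, the $\epsilon^{\pm s/2}$ factors cancel and we obtain a bound of the form $|I(\epsilon) - I_0(\epsilon)| \le C'' e^{-\delta^2/4\epsilon}$.

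To finish, fix any constant $c$ with $0 < c < \delta^2/4$. Then
\[ \frac{C'' e^{-\delta^2/4\epsilon}}{\epsilon^{-s/2} e^{-c/\epsilon}} = C''\, \epsilon^{s/2}\, e^{-(\delta^2/4 - c)/\epsilon} \xrightarrow[\epsilon \to 0^+]{} 0, \]
since $\delta^2/4 - c > 0$ and exponential decay dominates any power of $\epsilon$, yielding $|I(\epsilon) - I_0(\epsilon)| = o(\epsilon^{-s/2} e^{-c/\epsilon})$ as claimed. There is no deep obstacle here: the entire argument rests on the single observation that $Q - Q_0$ vanishes on a full neighborhood of the origin, which localizes the integral away from $0$. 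The only points requiring care are matching the two normalizations of $I$ and $I_0$ and confirming the polynomial bound on $Q - Q_0$ (which uses the compact support of $Q$ from Lemma \ref{lemma Q compact supp}); both are routine.
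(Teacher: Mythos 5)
Your proof is correct, and its overall strategy coincides with the paper's: both observe that $R = Q - Q_0$ vanishes on a ball $\{|y|<\delta\}$, bound $R$ by a polynomial outside that ball, and reduce the claim to showing that the resulting Gaussian tail integral is $O\bigl(\mathrm{poly}\cdot e^{-\delta^2/4\epsilon}\bigr)$, finally taking $0 < c < \delta^2/4$. Where you diverge is in the execution of the tail estimate. The paper passes to polar coordinates, reducing everything to one-dimensional integrals $\int_\delta^\infty r^\ell e^{-r^2/2\epsilon}\,dr$, and then devotes a separate auxiliary result (Lemma \ref{lemma int bound}, proved by induction and integration by parts) to bounding these by $p(\sqrt{2\epsilon})\,e^{-\delta^2/4\epsilon}$. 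You instead split the exponent on the region $|y|\ge\delta$ via $e^{-|y|^2/2\epsilon} \le e^{-\delta^2/4\epsilon}\,e^{-|y|^2/4\epsilon}$ and absorb the polynomial factor into a full-space Gaussian integral, which rescaling shows is $O(\epsilon^{s/2})$; this makes the $\epsilon^{\pm s/2}$ factors cancel outright and renders the auxiliary lemma unnecessary. Your route is the more economical one, and it is also slightly sharper in presentation, since the exponential gain appears in a single inequality rather than through an induction. One small remark: the lemma as stated concerns an arbitrary piecewise polynomial $Q$, with no compact support hypothesis, so your appeal to Lemma \ref{lemma Q compact supp} to bound $Q$ is strictly speaking an appeal to the intended application $Q = Q^\eta$ rather than to the hypotheses of the lemma; this is harmless, and is anyway avoidable, since a piecewise polynomial function with finitely many pieces automatically satisfies a bound of the form $|Q(y)| \le C(1+|y|)^k$ — which is exactly the growth bound the paper uses for $R$ on $\{|y|>\delta\}$.
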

\begin{proof} 
Let $R(y) = Q(y)-Q_0(y)$. Then $R(y)$ is piecewise polynomial and identically zero in a neighborhood of the origin. Pick $\delta > 0$ so that $R(y)$ is identically zero for $|y|<\delta$. Switching to polar coordinates, we have
\[ |I(\epsilon) - I_0(\epsilon)| \leq c' \epsilon^{-s/2}\int_{S^{s-1}}\int_\delta^\infty |R(y)| e^{-r^2 / 2\epsilon} r^{s-1} dr dv_{S^{s-1}}, \]
where $c'$ is a constant that does not depend on $\epsilon$.
Since $R(y)$ is piecewise polynomial, we can find constants $a_0, \dots, a_N$ so that for $|y| > \delta$, we have $|R(y)| \leq \sum_j a_j |y|^j$. Combining this with the previous estimate, we have
\[ |I(\epsilon) - I_0(\epsilon)| \leq c^{''} \epsilon^{-s/2} \sum_{j=1}^N a_j \int_\delta^\infty r^{j+s-1} e^{-r^2 / 2\epsilon} dr, \]
where $c^{''}$ is a constant that does not depend on $\epsilon$. This reduces the problem to estimating integrals of the form $\int_\delta^\infty r^\ell e^{-r^2 / 2\epsilon} dr$ for $\ell \geq 0$. The following Lemma \ref{lemma int bound} shows that such an integral is bounded by a function of the form 
$p(\sqrt{2\epsilon}) e^{-\delta^2/(4\epsilon)}$, where $p$ is a polynomial of degree $\ell +1$. 
The result follows.
\end{proof}

\begin{lemma}\label{lemma int bound}
	The integral $I_n^\delta (a):= \int_\delta^\infty x^n e^{-ax^2} dx$, $a,\delta >0$, $n \in \mathbb{N}$, is bound\-ed from above by a function of the form $p_n(1/\sqrt{a}) e^{-\frac{\delta^2 a}{2}}$, where $p_n$ is a polynomial of degree $n+1$.
\end{lemma}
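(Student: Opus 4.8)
The plan is to estimate $I_n^\delta(a)$ by factoring the Gaussian so as to extract the claimed exponential factor $e^{-\delta^2 a/2}$, thereby reducing the problem to a single \emph{complete} Gaussian moment integral carrying an explicit power of $1/\sqrt a$. First I would split the exponent as $a x^2 = \tfrac{a}{2}x^2 + \tfrac{a}{2}x^2$ and write
\[ I_n^\delta(a) = \int_\delta^\infty x^n e^{-a x^2/2}\, e^{-a x^2/2}\, dx. \]
On the domain of integration $x \geq \delta$, so $e^{-a x^2/2} \leq e^{-a\delta^2/2}$; pulling this constant out and then enlarging the domain to $[0,\infty)$ (legitimate, as the integrand is nonnegative) gives
\[ I_n^\delta(a) \leq e^{-a\delta^2/2} \int_\delta^\infty x^n e^{-a x^2/2}\, dx \leq e^{-a\delta^2/2} \int_0^\infty x^n e^{-a x^2/2}\, dx. \]

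Next I would evaluate the complete moment integral by the substitution $u = \sqrt{a/2}\, x$, so that $dx = \sqrt{2/a}\, du$ and $x^n = (2/a)^{n/2} u^n$, yielding
\[ \int_0^\infty x^n e^{-a x^2/2}\, dx = \left(\tfrac{2}{a}\right)^{(n+1)/2} \int_0^\infty u^n e^{-u^2}\, du = \tfrac{1}{2}\,\Gamma\!\left(\tfrac{n+1}{2}\right) 2^{(n+1)/2}\, a^{-(n+1)/2}. \]
Setting $C_n := \tfrac{1}{2}\Gamma\!\left(\tfrac{n+1}{2}\right) 2^{(n+1)/2}$, which is a finite positive constant depending only on $n$, this equals $C_n\,(1/\sqrt a)^{n+1}$. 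Combining with the previous display gives
\[ I_n^\delta(a) \leq C_n\,(1/\sqrt a)^{n+1}\, e^{-a\delta^2/2}, \]
which is precisely of the required form $p_n(1/\sqrt a)\,e^{-\delta^2 a/2}$ with $p_n(t) = C_n\, t^{n+1}$, a (monomial) polynomial of degree $n+1$.

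There is essentially no genuine obstacle in this argument: it is a one-line factoring trick followed by the evaluation of a standard Gamma integral. The only point demanding minor care is the bookkeeping of the power of $a$ in the substitution, so as to confirm that the complete moment integral contributes $a^{-(n+1)/2}$ rather than $a^{-n/2}$; it is this half-integer shift, coming from the extra factor $\sqrt{2/a}$ in $dx$, that forces the degree of $p_n$ to be $n+1$ and not $n$.
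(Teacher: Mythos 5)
Your proof is correct, and it takes a genuinely different route from the paper's. The paper proceeds by induction on $n$: the base case $n=0$ is handled by the classical squaring-and-polar-coordinates trick, the case $n=1$ by exact evaluation, and the inductive step by integration by parts, which produces the recursion $I_n^\delta(a) = \frac{\delta^{n-1}}{2a}e^{-a\delta^2} + \frac{n-1}{2a}I_{n-2}^\delta(a)$ and hence a polynomial $p_n$ whose coefficients involve $\delta$. You instead factor the Gaussian as $e^{-ax^2} = e^{-ax^2/2}e^{-ax^2/2}$, absorb one factor into the constant $e^{-a\delta^2/2}$ on the domain $x \geq \delta$, enlarge the domain to $[0,\infty)$, and evaluate the resulting complete moment via the substitution $u = \sqrt{a/2}\,x$ and the Gamma integral $\int_0^\infty u^n e^{-u^2}\,du = \tfrac{1}{2}\Gamma\bigl(\tfrac{n+1}{2}\bigr)$. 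This is shorter, avoids induction entirely, and yields the slightly cleaner bound $C_n\,(1/\sqrt{a})^{n+1}e^{-a\delta^2/2}$ with $C_n$ independent of $\delta$ (a monomial, which still qualifies as a polynomial of degree $n+1$, exactly as needed by Lemma \ref{lemma-polynomial-asymptotics}). What the paper's argument buys in exchange is complete self-containedness at the level of elementary calculus, with no appeal to the Gamma function; what yours buys is brevity and a bound whose polynomial part is uniform in $\delta$. Both establish the statement in the form required.
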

\begin{proof}
	The claim is shown by induction on $n$. By substituting $x = \sqrt{a}^{-1}y$, we obtain
	\begin{align*}
		(I_0^\delta(a))^2 &= \left(\sqrt{a}^{-1} \int \limits_{\sqrt{a}\delta}^\infty e^{-y^2} dy \right)^2= \left(\frac{1}{2\sqrt{a}} \int\limits_{\R \setminus [-\sqrt{a}\delta,\sqrt{a}\delta]} e^{-y^2} dy \right)^2 \\
		&= \frac{1}{4a}  \int\limits_{\R^2 \setminus [-\sqrt{a}\delta,\sqrt{a}\delta]^2} e^{-(x^2+y^2)} dx dy  \leq \frac{1}{4a}   \int\limits_{\R^2 \setminus B_{\sqrt{a} \delta}(0)} e^{-(x^2+y^2)} dx dy  ,
	\end{align*}
	where $B_{\sqrt{a} \delta}(0)$ denotes the ball of radius $\sqrt{a} \delta$, centered at the origin. By passing to polar coordinates, the integral becomes
	\begin{align*}
		(I_0^\delta(a))^2 &\leq
		 \frac{1}{4a}  \int\limits_{0}^{2\pi} \int\limits_{\delta \sqrt{a}}^{\infty} \frac{d}{dr}\left[-\frac{1}{2}e^{-r^2}\right] dr d\phi 
		 =\frac{1}{4a}  \int\limits_{0}^{2\pi} \frac{1}{2}e^{-\delta^2a} d\phi  = \frac{\pi}{4a}(e^{-\frac{\delta^2a}{2}})^2.
	\end{align*}
	For $n=1$, we can directly compute
	\begin{align}
		I_1^\delta(a) &=\int_\delta^\infty x e^{-ax^2} dx = -\frac{1}{2a}\int_\delta^\infty \frac{d}{dx}\left[ e^{-ax^2}\right] dx = \frac{1}{2a}e^{-a\delta^2}\label{eq I1} \leq  \frac{1}{2a}e^{-\frac{a\delta^2}{2}}. \notag
	\end{align}
	Thus, the claim holds for $n=0,1$. Now, let $n \geq 2$ and suppose the claim holds for $n-2$. We integrate by parts.
	\allowdisplaybreaks
	\begin{align*}
		I_n^\delta (a)&=
		\int_\delta^\infty -\frac{ x^{n-1}}{2a}\cdot \frac{d}{dx}\left[ e^{-ax^2}\right] dx \\
		&= \left[-\frac{ x^{n-1}}{2a}\cdot  e^{-ax^2}\right]_{x=\delta}^\infty + \int_\delta^\infty \frac{(n-1) x^{n-2}}{2a}\cdot e^{-ax^2} dx\\
		&= \frac{\delta^{n-1}}{2a}e^{-a\delta^2} + \frac{n-1}{2a}I_{n-2}^\delta (a) \leq  \left(\frac{\delta^{n-1}}{2a} + \frac{n-1}{2a} p_{n-2}(a^{-1/2})\right)e^{-\frac{a\delta^2}{2}}.
	\end{align*}
	Setting $p_n(a^{-1/2})= \left(\frac{\delta^{n-1}}{2a} + \frac{n-1}{2a} p_{n-2}(a^{-1/2})\right)$ yields the claim.
\end{proof}

We now want to apply Lemma \ref{lemma-polynomial-asymptotics} to $Q^\eta$.
It remains to show that $Q^\eta(y)$ is regular near $0$, and to compute the polynomial $Q^\eta_0(y)$ which agrees with $Q^\eta(y)$ near the origin. We will make use of the local normal form we found in \S \ref{sec K contact}. Analogous statements in the symplectic setting can be found in \cite[\S\S~5,~7,~8]{JeffreyKirwan95}.

\begin{proposition} \label{prop-q0} Suppose that $0$ is a regular value of $\mu$. Then $Q^\eta(y)$ is regular in some neighborhood of $0$, and on this neighborhood it coincides with the polynomial $Q^\eta_0(y)$ given by
  \[ Q^\eta_0(y) = i^s (2\pi)^{s/2} \int_{\mu^{-1}(0)} q^\ast(\alpha_0 \wedge \eta_0)\wedge e^{i q^*d\alpha_0 - i y(F_\theta)} \Omega, \]
where $\theta$ is a $G$-invariant basic connection form on the $G$-bundle $q:\mu^{-1}(0)\to \mu^{-1}(0) /G$, $F_\theta$ denotes its curvature form,  $\Omega=\theta_1 \wedge ... \wedge \theta_s$ is the volume form on the $G$-orbits defined by $\theta$, $\eta_0 \in H(M_0, \CF_0)$ represents $i_0^* \eta \in H_G(\mu^{-1}(0), \CF)$, where the inclusion $\mu^{-1}(0) \hookrightarrow M$ is denoted by $i_0$, and $\alpha_0$ denotes the induced contact form on $\mu^{-1}(0)/G=M_0$. Here, $\mu^{-1}(0)$ is en\-dowed with the orientation induced by the volume form $q^*(\alpha_0 \wedge (d\alpha_0)^{n-s}) \wedge \Omega$. In particular, with $n_0$ denoting the order of the regular isotropy of the action of $G$ on $\mu^{-1}(0)$, we have 
\[\int_{M_0} \alpha_0\wedge \eta_0 \wedge  e^{id\alpha_0}=\frac{n_0}{i^s(2\pi)^{s/2}\vol G } \BF \left( \Pi_*(\eta \wedge  e^{id_G\alpha})\right)(0).\]
\end{proposition}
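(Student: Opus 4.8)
The plan is to compute $Q^\eta$ near the origin directly from the local normal form of Proposition \ref{prop-normal-form}, exploiting that $0$ is a regular value. By Lemma \ref{lemma Q compact supp}, $Q^\eta(y)$ is (distributionally) the pushforward of a distribution supported on $\{-\mu = y\}$; hence for $y$ in a sufficiently small ball this support lies entirely inside the model neighborhood $U \cong \mu^{-1}(0)\times B_h$, on which $\mu(p,z) = z$ and $\alpha = q^\ast\alpha_0 + z(\theta)$. Thus near $0$ the distribution $Q^\eta$ is determined by the model alone, and the computation below will simultaneously exhibit it as a polynomial (consistent with, and refining, Proposition \ref{prop-fourier-polynomial}) and produce the stated expression $Q^\eta_0$.

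In the model one has $d\alpha = q^\ast d\alpha_0 + dz(\theta) + z(d\theta)$ and $d_G\alpha(\phi) = d\alpha - \ev{z,\phi}$. Substituting into the definition of $Q^\eta$ and carrying out the oscillatory $\phi$-integral first, $\int_\fg \eta(\phi)\,e^{-i\ev{y+z,\phi}}d\phi$ yields derivatives of $\delta(y+z)$; integrating these over the fiber coordinate $z\in B_h$ localizes everything at $z=-y$. The decisive point is that the $s$ factors $dz_1\wedge\cdots\wedge dz_s$ needed to integrate over the fiber can only be supplied by the top term $\tfrac{i^s}{s!}(dz(\theta))^s = i^s\,dz_1\wedge\cdots\wedge dz_s\wedge\Omega$ (up to the orientation sign of the normal form) in the expansion of $e^{id\alpha}$; this is precisely what produces the prefactor $i^s$ and the orbit volume form $\Omega$.

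After the fiber integration the surviving factors are restricted to $\mu^{-1}(0)$ and evaluated at $z=-y$. Here $\alpha\wedge\Omega = q^\ast\alpha_0\wedge\Omega$ because $z(\theta)\wedge\Omega = 0$, and the structure equation gives $z(d\theta)\wedge\Omega \equiv z(F_\theta)\wedge\Omega$ since the bracket term dies against $\Omega$; setting $z=-y$ turns $e^{iz(d\theta)}$ into $e^{-iy(F_\theta)}$. The polynomial dependence of $\eta(\phi)$ on $\phi$ is converted, through the $\delta$-derivatives acting by Chern--Weil, into the restriction $\eta_0$ representing $i_0^\ast\eta$ under the isomorphism $H_G(\mu^{-1}(0),\CF)\cong H(M_0,\CF_0)$. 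Collecting these contributions gives exactly $Q^\eta_0(y) = i^s(2\pi)^{s/2}\int_{\mu^{-1}(0)} q^\ast(\alpha_0\wedge\eta_0)\wedge e^{iq^\ast d\alpha_0 - iy(F_\theta)}\,\Omega$, which is polynomial in $y$ because the exponential truncates by form-degree.

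Finally, for the displayed consequence I set $y=0$, so that the $-iy(F_\theta)$ term drops and $\BF(\Pi_\ast(\eta\wedge e^{id_G\alpha}))(0) = Q^\eta(0) = Q^\eta_0(0)$. Since $q:\mu^{-1}(0)\to M_0$ is a $G$-bundle whose generic orbit is $G/\Gamma$ with $|\Gamma| = n_0$, and $\Omega$ restricts to the invariant volume form on the orbits with $\int_{\mathrm{orbit}}\Omega = \vol(G)/n_0$, fiber integration yields $\int_{\mu^{-1}(0)} q^\ast\beta\wedge\Omega = \tfrac{\vol G}{n_0}\int_{M_0}\beta$; applying this to $\beta = \alpha_0\wedge\eta_0\wedge e^{id\alpha_0}$ and solving for the integral over $M_0$ gives the claim. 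I expect the main obstacle to be the distributional bookkeeping: justifying the interchange of the $\phi$- and $z$-integrations, correctly handling the $\delta$-derivatives produced when $\eta\neq 1$, tracking the orientation signs so they match the $(-1)^{s(s+1)/2}$ of the normal form, and verifying that the $\phi\mapsto F_\theta$ substitution is precisely the Cartan--Kirwan isomorphism $H_G(\mu^{-1}(0),\CF)\cong H(M_0,\CF_0)$.
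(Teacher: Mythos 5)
Your setup matches the paper's: you localize $Q^\eta$ to the model neighborhood $\mu^{-1}(0)\times B_h$ using Lemma \ref{lemma Q compact supp} and Proposition \ref{prop-normal-form}, you correctly identify that only the top term $(i\,dz(\theta))^s/s!$ of the exponential can supply the fiber volume $[dz]$ (producing the factor $i^s$ and the form $\Omega$), and your treatment of the ``in particular'' statement via $\int_{\mathrm{orbit}}\Omega = \vol(G)/n_0$ is exactly right. But there is a genuine gap at the heart of the argument: the step where the $\phi$-dependent form $\eta(\phi)$ becomes the basic form $q^\ast\eta_0$. You assert that the $\delta$-derivatives produced by the polynomial dependence of $\eta$ on $\phi$ act ``by Chern--Weil'' and thereby convert $\eta$ into $\eta_0$. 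This does not work as a pointwise or distributional identity: writing $\eta(\phi)=\sum_J\eta_J\phi^J$, the Fourier transform turns $\phi^J$ into $i^J\partial_{y^J}$, and by the Leibniz rule these derivatives hit \emph{every} $y$-dependent factor --- the restriction of $\eta_J$ to the slice $z=-y$, the factor $e^{iz(d\theta)}$, etc. --- so the result is a sum of cross terms, not the clean substitution $\phi\mapsto F_\theta$ applied to $\eta$. The identity $[\eta|_{\mu^{-1}(0)}]\mapsto[\eta_0]$ holds only in cohomology, and passing from cohomologous integrands to equal distribution-valued integrals is precisely what has to be proven here; you flag this yourself as an ``obstacle'' but never resolve it, and it is the core of the proof rather than bookkeeping.

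The paper resolves it differently, and this is the step you would need to add. Before doing any $\phi$-integration, one uses the $G\times T$-equivariant homotopy equivalence $\mu^{-1}(0)\times B_h\simeq\mu^{-1}(0)$ and the Cartan isomorphism to write $\eta-\pi^\ast q^\ast\eta_0=d_G\gamma$ on the model neighborhood, for some $\gamma\in\Omega_G(\mu^{-1}(0)\times B_h,\CF\times\{\mathrm{pt.}\})$. The discrepancy $\Delta$ between $Q^\eta(y)$ and the same expression with $\pi^\ast q^\ast\eta_0$ in place of $\eta$ is then the integral of $\alpha\wedge d_G\gamma\wedge e^{id_G\alpha-i\ev{y,\phi}}$; since only the top-degree component survives integration, one may pass to the ordinary differential, apply Stokes' theorem, and obtain a boundary integral over $\mu^{-1}(0)\times S_h$ weighted (after the $\phi$-integration) by $\delta(-\mu-y)$. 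Because $\mu(p,z)=z$ in the model, this delta is supported at $z=-y$, hence away from the boundary sphere $S_h$ for $|y|<h$, so $\Delta=0$ for small $y$. Only after this replacement does the computation you sketch (extraction of $i^s\Omega\wedge[dz]$, evaluation at $z=-y$, replacement of $d\theta$ by $F_\theta$ against $\Omega$, and the orientation sign $(-1)^{s(s+1)/2}$ cancelling against $(-1)^{s(s-1)/2}$) go through with a $\phi$-independent integrand, where no derivative cross terms arise. Without this exact-form-plus-Stokes argument (or an equivalent substitute), your proof establishes the proposition only for $\eta$ constant in $\phi$.
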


\proof
Recall that 
\allowdisplaybreaks
\begin{align*}
  Q^\eta(y) &=\BF\left[ \Pi_\ast (\eta \wedge  e^{id_G\alpha}) \right](y)= \frac{1}{(2\pi)^{s/2}} \int_M \int_\fg \alpha \wedge \eta( \phi) \wedge e^{id_G\alpha - i \ev{y,\phi}} d\phi 
\end{align*}
By Lemma \ref{lemma Q compact supp}, when $y$ is sufficiently small, we may replace the integral over $M$ by an integral over $U \subset M$, where $U$ is a neighborhood of $\mu^{-1}(0)$. 
Using the normal form of Proposition \ref{prop-normal-form}, we see that for small $y$
\begin{align*}
  Q^\eta(y) &= \frac{1}{(2\pi)^{s/2}}  \int_\fg \int_{\mu^{-1}(0) \times B_h} \alpha \wedge \eta(\phi) \wedge e^{id_G\alpha  - i \ev{y,\phi}} d\phi ,
\end{align*}
where $\mu^{-1}(0) \times B_h$ is canonically oriented by the contact volume form.
Consider the projection $\pi: \mu^{-1}(0) \times B_h \rightarrow \mu^{-1}(0) \times \{0\}$ and the inclusion $i:\mu^{-1}(0) \times \{0\}\rightarrow \mu^{-1}(0) \times B_h $. 
Then $i \circ \pi : \mu^{-1}(0) \times B_h \rightarrow \mu^{-1}(0) \times B_h $ is $G $ $\times T$-e\-qui\-va\-ri\-antly homotopic to the identity and, hence, $i$ induces an isomorphism 
\begin{align*}
	H_G(\mu^{-1}(0) \times B_h, \CF \times \{\text{pt.}\}) &\cong  H_G(\mu^{-1}(0) \times \{0\}, \CF \times \{0\})\\
	&=H_G(\mu^{-1}(0) , \CF).
\end{align*}

Since $[q^* \eta_0]=[i_0^*\eta]$ by definition of $\eta_0$, it is $[\pi^*q^*\eta_0]$ $=$ $[\eta|_{\mu^{-1}(0) \times B_h}]$. 
Therefore, there is a $\gamma \in \Omega_G(\mu^{-1}(0) \times B_h, \CF \times \{\text{pt.}\})$ such that $\eta -\pi^* q^* \eta_0 = d_G\gamma$. Set 
\begin{align*}
\Delta :&= Q^\eta(y) - \frac{1}{(2\pi)^{s/2}}  \int_\fg \int_{\mu^{-1}(0) \times B_h} \alpha \wedge \pi^*q^*\eta_0 \wedge e^{id_G\alpha  - i \ev{y,\phi}} d\phi\\
&= \frac{1}{(2\pi)^{s/2}}  \int_\fg \int_{\mu^{-1}(0) \times B_h} \alpha \wedge d_G\gamma \wedge  e^{id_G\alpha  - i \ev{y,\phi}} d\phi.
\end{align*}
Since $d_Gd_G\alpha = 0$ and $d_G\phi_j = 0$, we have 
$d_G\gamma \wedge e^{id_G\alpha - i \ev{y,\phi}}=d_G\left(\gamma \wedge e^{id_G\alpha - i \ev{y,\phi}}\right)$. 
The integral over $\mu^{-1}(0) \times B_h$ picks up only those components of the basic form $d_G\left(\gamma \wedge e^{id_G\alpha - i \ev{y,\phi}}\right)$ of degree $2n$, so we can pass to the ordinary differential and 
\begin{align*}
(2\pi)^{s/2}\Delta &=  \int\limits_\fg \int\limits_{\mu^{-1}(0) \times B_h} \alpha \wedge d(\gamma \wedge e^{id_G\alpha  - i \ev{y,\phi}}) d\phi\\
& = \int\limits_\fg \int\limits_{\mu^{-1}(0) \times B_h} \!-d\left(\alpha \wedge \gamma \wedge  e^{id_G\alpha -i \ev{y,\phi}}\right) + d\alpha \wedge \gamma \wedge e^{id_G\alpha  - i \ev{y,\phi}} d\phi.
\end{align*}
The second summand is basic, hence, its top degree part is zero. Thus, the whole summand vanishes under integration. By Stokes' Theorem, denoting the boundary of $B_h$ by $S_h$, we obtain
\begin{align*}
(2\pi)^{s/2}\Delta &= -  \int_\fg \int_{\mu^{-1}(0) \times S_h} \alpha \wedge \gamma \wedge e^{id_G\alpha  - i \ev{y,\phi}} d\phi.
\end{align*}
Write $\gamma(\phi) = \sum_J \gamma_J \phi^J$. As in the proof of Lemma \ref{lemma Q compact supp}, the previous equation becomes
\begin{align*}
(2\pi)^{s/2}\Delta &= -(2\pi)^{s}\sum_J i^J\frac{\partial}{\partial y^J}\int_{\mu^{-1}(0) \times S_h} \alpha \wedge \gamma_J \wedge e^{id\alpha} \delta(-\mu - y).
\end{align*}
Recall that the local normal form of the moment map is given by $\mu(p,z)=z$. Then, for sufficiently small $y$, $\delta(-\mu-y)$ is supported away from $S_h$ and it follows that $\Delta=0$. This means that, for sufficiently small $y$,
\begin{align*}
 Q^\eta(y)&= \frac{1}{(2\pi)^{s/2}}  \int\limits_\fg \int\limits_{\mu^{-1}(0) \times B_h} \alpha \wedge \pi^*q^*\eta_0 \wedge  e^{id\alpha + i\ev{-\mu - y, \phi}} d\phi\\
 &= (2\pi)^{s/2} \!\!\int\limits_{\mu^{-1}(0) \times B_h} \!\!\alpha \wedge \pi^*q^*\eta_0 \wedge  e^{id\alpha} \delta(-\mu - y)\\
 &=(2\pi)^{s/2} \!\! \int\limits_{\mu^{-1}(0) \times B_h} \!\! (q^*\alpha_0 + z(\theta)) \wedge q^*\eta_0 \wedge e^{idq^*\alpha_0 + idz(\theta) + iz(d\theta)} \delta(-z - y).
 \end{align*}
 Let $j$ index an orthonormal basis of $\fg$ resp. $\fg^*$ and set $\Omega = \theta_1 \wedge ... \wedge \theta_s$ the volume form on the $G$-orbits, $[dz]=dz_1 \wedge ... \wedge dz_s$. 
 We only obtain a non-zero contribution from $e^{idz(\theta)}$ from the term containing $(idz(\theta))^s = s! i^s(-1)^{s(s+1)/2}\Omega \wedge [dz]$
 since all the factors $dz_j$ must appear. Additional factors of $\theta$ will wedge to 0 with $\Omega$, so $z(\theta)$ does not contribute to the integral. We obtain 
 
 \begin{align}
 Q^\eta(y)&=i^s(-1)^{\tfrac{s(s+1)}{2}}(2\pi)^{s/2} \hspace{-12pt} \int\limits_{\mu^{-1}(0) \times B_h}\hspace{-12pt} q^*(\alpha_0 \wedge \eta_0) \wedge  e^{iq^*d\alpha_0 + iz(d\theta)} \Omega \delta(-z - y) [dz].\label{eq Qeta0 calculation}
 \end{align}
 
 The orientation on $\mu^{-1}(0) \times B_h$ is canonically given by the contact volume form 
 \begin{align*}
 	(q^*\alpha_0 +z(\theta))  \! \wedge \!  (q^*d\alpha_0 + d(z(\theta) \! ))^{n}
 	=(-1)^{\tfrac{s(s-1)}{2}} \!  \tfrac{n!}{(n-s)!}q^*\alpha_0 \wedge \!  (q^*d\alpha_0 + z(d\theta))^{n-s} \!  \! \wedge  \! \Omega  \! \wedge [dz].
 \end{align*}
 For $z=0$, this volume form differs by a factor of $(-1)^{s(s-1)/2} \! \tfrac{n!}{(n-s)!}$ from the volume form $\nu := q^*(\alpha_0 \wedge d\alpha_0^{n-s})\wedge \Omega\wedge [dz]$. 
 Hence, when changing the orientation of $\mu^{-1}(0) \times B_h$ to that induced by $\nu$ in Equation \ref{eq Qeta0 calculation}, denoting the thusly oriented manifold by $(\mu^{-1}(0) \times B_h)^\nu$, we obtain a factor $(-1)^{s(s-1)/2}$ and obtain
 \begin{align*}
 Q^\eta(y)&=i^s(2\pi)^{s/2} \int_{(\mu^{-1}(0) \times B_h)^\nu} q^*(\alpha_0 \wedge \eta_0) \wedge  e^{iq^*d\alpha_0 + iz(d\theta)} \Omega \delta(-z - y) [dz].
 \end{align*}
 On $B_h$, we consider the orientation induced by $[dz]$ and we endow $\mu^{-1}(0)$ with the orientation induced by $q^*(\alpha_0 \wedge d\alpha_0^{n-s})\wedge \Omega$ so that their product gives the orientation of $(\mu^{-1}(0) \times B_h)^\nu$. We continue our computation by integrating over $B_h$ and obtain
 \begin{align*}
 Q^\eta(y)&=i^s(2\pi)^{s/2} \int_{\mu^{-1}(0) } q^*(\alpha_0 \wedge \eta_0)\wedge  e^{iq^*d\alpha_0 - iy(d\theta)} \Omega\\
 &= i^s(2\pi)^{s/2} \int_{\mu^{-1}(0) } q^*(\alpha_0 \wedge \eta_0) \wedge e^{iq^*d\alpha_0 - iy(F^\theta)} \Omega,
 \end{align*}
 where we have replaced the term $d\theta$ by the curvature form $F_\theta = d\theta + \frac{1}{2}[\theta,\theta]$, which, as above, does not change the value of the integral. Therefore we obtain the claimed expression for $Q^\eta_0(y)$. This is obviously a polynomial in $y$, since only finitely many terms in the power series expansion of $e^{-iy(F_\theta)}$ are non-zero. 
 
 $\mu^{-1}(0)/ G$ is canonically oriented by $\alpha_0 \wedge d\alpha_0^{n-s}$. Hence, together with above orientation on $\mu^{-1}(0)$, the projection $q$ induces the same orientation on the fibers as $\Omega$.  	
 $\Omega$ integrates to $\vol (G)/n_0$ over the fiber, so, when $y=0$, the previous equation becomes 
	\[\BF(\Pi_*(\eta \wedge  e^{id_G\alpha}))(0) =i^s(2\pi)^{s/2}\vol ( G)/n_0 \int_{ \mu^{-1}(0)/G}\alpha_0 \wedge \eta_0 \wedge e^{id\alpha_0}.\qed \]

\begin{proposition} \label{prop-i0-expression}
  Let $\Theta \in H^4(M_0, \CF_0)$ be the class corresponding to the class $-\frac{<\phi,\phi>}{2}\in H_G^4(\mu^{-1}(0),\CF)\simeq H^4(M_0, \CF_0)$ under the Cartan map. Then
  \[ I^\eta_0(\epsilon) = \tfrac{1}{n_0} \int_{M_0} \alpha_0 \wedge \eta_0 \wedge e^{\epsilon \Theta + id\alpha_0}. \] 
\end{proposition}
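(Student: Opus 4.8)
The plan is to substitute the explicit polynomial $Q^\eta_0(y)$ from Proposition~\ref{prop-q0} into the defining integral for $I^\eta_0(\epsilon)$ and carry out the resulting Gaussian integral over $\fg^\ast$ directly. Writing $F_\theta = \sum_j F_j X_j$ with respect to an orthonormal basis $(X_j)$ of $\fg$ with dual basis $(u_j)$, and $y = \sum_j y_j u_j$, we have $y(F_\theta) = \sum_j y_j F_j$; inserting the expression for $Q^\eta_0$ into
\[ I^\eta_0(\epsilon) = \frac{1}{(2\pi i)^s \epsilon^{s/2}\vol(G)}\int_{\fg^\ast} Q^\eta_0(y)\, e^{-|y|^2/2\epsilon}\, dy \]
and interchanging the integral with the power-series sum in the $F_j$ (a finite sum, since the $F_j$ are nilpotent in the exterior algebra) reduces everything to the scalar Gaussian integrals $\int_\R e^{-iy_j F_j - y_j^2/2\epsilon}\,dy_j$ over each coordinate.

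First I would evaluate these Gaussians by completing the square, treating the even-degree forms $F_j$ as formal commuting variables. Each coordinate contributes a factor $\sqrt{2\pi\epsilon}\, e^{-\epsilon F_j^2/2}$, and since the quadratic form $|y|^2 = \sum_j y_j^2$ is diagonal the integral factorizes with no cross terms, giving
\[ \int_{\fg^\ast} e^{-iy(F_\theta)}\, e^{-|y|^2/2\epsilon}\,dy = (2\pi\epsilon)^{s/2}\, e^{-\tfrac{\epsilon}{2}\sum_j F_j^2}. \]
The next step is to recognize $-\tfrac12\sum_j F_j^2$ as a representative of $\Theta$: by the Chern--Weil description of the Cartan map the degree-two generator $\phi_j$ is sent to the curvature component $F_j$, so $-\tfrac{\ev{\phi,\phi}}{2} = -\tfrac12\sum_j\phi_j^2$ maps precisely to $-\tfrac12\sum_j F_j^2$, the ambiguity in the sign of $\phi$ (from the sign convention in $d_G$) being irrelevant since the expression is quadratic. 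Hence the Gaussian produces exactly the factor $e^{\epsilon\Theta}$.

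Collecting constants, the prefactor $i^s(2\pi)^{s/2}/(2\pi i)^s$ coming from $Q^\eta_0$ equals $(2\pi)^{-s/2}$, and together with $(2\pi\epsilon)^{s/2}$ the powers of $\epsilon$ and $2\pi$ cancel exactly, leaving
\[ I^\eta_0(\epsilon) = \frac{1}{\vol(G)}\int_{\mu^{-1}(0)} q^\ast(\alpha_0\wedge\eta_0)\wedge e^{iq^\ast d\alpha_0 + \epsilon\Theta}\wedge\Omega. \]
Finally I would descend to $M_0 = \mu^{-1}(0)/G$ exactly as at the end of the proof of Proposition~\ref{prop-q0}: since $\Omega$ is the volume form on the $G$-orbits it integrates to $\vol(G)/n_0$ along the fibres of $q$, so that $\int_{\mu^{-1}(0)} q^\ast\beta\wedge\Omega = \tfrac{\vol(G)}{n_0}\int_{M_0}\beta$ for any basic form $\beta$ on $M_0$. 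Applying this with $\beta = \alpha_0\wedge\eta_0\wedge e^{\epsilon\Theta + id\alpha_0}$ yields the claimed identity.

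The only genuinely delicate point I expect is the form-valued Gaussian integration together with the identification of its output with $e^{\epsilon\Theta}$ through the Cartan map; the remainder is bookkeeping of the numerical constants $(2\pi i)^s$, $i^s$, $(2\pi)^{s/2}$ and the powers of $\epsilon$, together with the fibre-integration of $\Omega$, both of which are already carried out in the preceding proposition.
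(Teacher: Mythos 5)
Your proposal is correct and follows essentially the same route as the paper's proof: substitute the expression for $Q^\eta_0(y)$ from Proposition \ref{prop-q0}, evaluate the form-valued Gaussian integral over $\fg^\ast$ to produce the factor $e^{-\epsilon|F_\theta|^2/2}$, identify $-|F_\theta|^2/2$ with $q^\ast\Theta$ via the Cartan (Chern--Weil) map, and finish by integrating $\Omega$ over the fibres of $q$ to pick up the factor $\vol(G)/n_0$. Your constant bookkeeping ($i^s(2\pi)^{s/2}/(2\pi i)^s=(2\pi)^{-s/2}$ cancelling against $(2\pi\epsilon)^{s/2}$) matches the paper exactly; the only cosmetic difference is that you carry out the Gaussian coordinate-by-coordinate, where the paper simply cites Gaussian integration.
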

\begin{proof}
The Cartan map yields $-|F_\theta|^2/2 = q^* \Theta$ in cohomology. By Proposition \ref{prop-q0},
\allowdisplaybreaks
\begin{align*}
  I^\eta_0(\epsilon) 
 &= \frac{1}{(2\pi \epsilon)^{s/2} \vol (G)} \int\limits_{\mu^{-1}(0) \times \fg^\ast} q^\ast(\alpha_0 \wedge \eta_0) \wedge e^{i q^*d\alpha_0 - i y(F_\theta)-|y|^2/2\epsilon} \wedge \Omega \ dy \\
 &=\frac{1}{(2\pi \epsilon)^{s/2} \vol (G)} \int\limits_{\mu^{-1}(0)} q^\ast(\alpha_0 \wedge \eta_0) \wedge e^{i q^*d\alpha_0} \wedge\Omega \int\limits_{\fg^*} e^{ -i y(F_\theta)-|y|^2/2\epsilon}  \ dy \\
 &= \frac{1}{\vol (G)} \int\limits_{\mu^{-1}(0)} q^\ast(\alpha_0 \wedge \eta_0) \wedge  e^{i q^*d\alpha_0 -\epsilon|F_\theta|^2 / 2} \wedge \Omega \quad \text{\small by Gaussian integration} \\
 &=  \tfrac{1}{n_0}\int\limits_{M_0} \alpha_0 \wedge \eta_0 \wedge e^{i d\alpha_0 + \epsilon \Theta}
\end{align*}
since $\Omega $ integrates to $\vol (G)/n_0$ over the fiber.
\end{proof}

Combining Lemma \ref{lemma-polynomial-asymptotics} with Proposition \ref{prop-i0-expression}, we obtain Theorem \ref{thm-integration}.

\subsection{Jeffrey-Kirwan residues}
We briefly recall the Jeffrey-Kirwan re\-si\-due operation. Let $\Lambda \subset \fg$ be a non-empty open cone and suppose that
$\beta_1, \ldots, \beta_N \in \fg^\ast$ all lie in the dual cone $\Lambda^\ast$.
Suppose that $\lambda \in \fg^\ast$ does not lie in any cone of dimension at most
$s-1$ spanned by a subset of $\{\beta_1, \ldots, \beta_N\}$. Let $\{\phi_1, \ldots, \phi_s\}$
be any system of coordinates on $\fg$ and let $d\phi = d\phi_1 \wedge \cdots \wedge d\phi_s$ be 
the associated volume form. Then there exists a residue operation $\jkres^\Lambda$
defined on meromorphic differential forms of the form
\begin{equation}
  h(\phi) = \frac{q(\phi) e^{i\lambda(\phi)}}{\prod_{j=1}^N \beta_j(\phi)} d\phi,
\end{equation}
where $q(\phi)$ is a polynomial. The operation $\jkres^\Lambda$ is linear in its
argument and is characterized uniquely by certain axioms, cf. \cite[Proposition 3.2]{JeffreyKirwan96}.

Theorem \ref{thm-residue} is now a consequence of our localization formula Theorem \ref{thm-localization} and Proposition \ref{prop-q0}.

\begin{proof}[Proof of Theorem \ref{thm-residue}]
$\BF(\Pi_\ast( \eta \wedge  e^{id_G\alpha}))$ is compactly supported by Lemma \ref{lemma Q compact supp}. Hence, \cite[Proposition~8.6]{JeffreyKirwan95} yields that the residue $\jkres^\Lambda(\Pi_\ast (\eta \wedge  e^{id_G\alpha})d\phi)$ is independent of the cone $\Lambda$.  Since $\BF(\Pi_\ast( \eta \wedge  e^{id_G\alpha}))$ is smooth near $0$ by Proposition \ref{prop-q0} and compactly supported, \cite[Pro\-po\-si\-tion~8.7]{JeffreyKirwan95} gives 
 \[i^{-s}(2\pi)^{-s/2}\BF \left( \Pi_*(\eta \wedge  e^{id_G\alpha})\right)(0) = \jkres^\Lambda(\Pi_\ast (\eta \wedge  e^{id_G\alpha})d\phi).\]
 By Pro\-po\-si\-tion \ref{prop-q0}, we then obtain  
  \begin{align*}
    \int_{M_0} \alpha_0 \wedge \eta_0 \wedge  e^{id\alpha_0} 
    &= \frac{n_0}{ \vol G}\jkres^\Lambda(\Pi_\ast (\eta \wedge  e^{id_G\alpha})d\phi).
  \end{align*}
  Using the expression for $\Pi_\ast (\eta \wedge  e^{id_G\alpha})$ provided by Theorem 
\ref{thm-localization}, namely, Equation \eqref{eq PI eta e idG alpha as sum}, we obtain the claimed formula.
\end{proof}

\section{Examples}

\subsection{Boothby-Wang fibrations} \label{sec-boothby}
We now explain how, for certain symplectic manifolds, the known results  may be recovered from our main theorems.

\begin{theorem}[Boothby-Wang \cite{BoothbyWang}]  Suppose that $(N, \omega)$ is a symplectic manifold with integral symplectic form. Then the connection 1-form $\alpha$ on the prequantum circle bundle $M \to N$ is a contact form. 
Conversely, if $(M, \alpha)$ is a compact contact manifold with Reeb vector field that induces an $S^1$-action, then there is an integral symplectic manifold $(N, \omega)$ such that $M$ is the prequantum circle bundle of $N$, with connection 1-form given by $\alpha$.
\end{theorem}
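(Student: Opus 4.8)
The plan is to treat the two implications separately, using only the relationship between a principal connection and its curvature together with the splitting $TM = \ker\alpha \oplus \langle R\rangle$ recorded in Section~\ref{sec K contact}.

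For the first implication, I would start from the prequantum circle bundle $\pi\colon M \to N$ and its connection $1$-form $\alpha$, normalized so that $d\alpha = \pi^\ast\omega$. Writing $R$ for the fundamental vector field of the structure $S^1$-action, the defining properties of a connection form give $\iota_R\alpha = 1$ and $L_R\alpha = 0$, while $\iota_R\,d\alpha = \iota_R\pi^\ast\omega = 0$ because $R$ is vertical and $\pi^\ast\omega$ is horizontal; hence $R$ is the Reeb field of $\alpha$. To see that $\alpha$ is contact, I would evaluate $\alpha\wedge(d\alpha)^n$ on the splitting $T_pM = \langle R\rangle \oplus \ker\alpha_p$. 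Since $d\pi$ maps $\ker\alpha_p$ isomorphically onto $T_{\pi(p)}N$ and $(d\alpha)^n = \pi^\ast(\omega^n)$ restricts there to the pullback of the volume form $\omega^n$, while $\alpha(R)=1$, the form $\alpha\wedge(d\alpha)^n$ is a volume form on $M$, so it is nowhere vanishing.

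For the converse, the hypothesis that $R$ integrates to an $S^1$-action, which I take to be free by regularity, makes $N := M/S^1$ a smooth closed manifold and $\pi\colon M \to N$ a principal $S^1$-bundle. From the Reeb equations and Cartan's formula, $L_R\alpha = d\iota_R\alpha + \iota_R\,d\alpha = 0$, and $\alpha(R)=1$, so $\alpha$ is a connection $1$-form on this bundle. Its curvature $d\alpha$ satisfies $\iota_R\,d\alpha = 0$ and $L_R\,d\alpha = 0$, hence is basic and descends to a unique $\omega \in \Omega^2(N)$ with $\pi^\ast\omega = d\alpha$. I would then verify that $\omega$ is symplectic: applying $\pi^\ast$ to $d\omega$ gives $d\,d\alpha = 0$, and injectivity of $\pi^\ast$ on forms yields $d\omega = 0$; nondegeneracy follows since $\pi^\ast(\omega^n) = (d\alpha)^n$ is nonzero on $\ker\alpha$ by the contact condition, so $\omega^n$ is nowhere zero on $N$.

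It remains to establish integrality and to identify $M$ as the prequantum bundle, which I expect to be the main point of the argument. Here I would invoke Chern--Weil theory: the curvature $d\alpha$ of the connection $\alpha$ represents, up to the standard normalization by $2\pi$ and a sign, the image in de~Rham cohomology of the first Chern class $c_1 \in H^2(N;\mathbb{Z})$ of the principal bundle $\pi$. Consequently $[\omega]$ lies in the integral lattice, so $(N,\omega)$ is an integral symplectic manifold whose prequantum circle bundle is precisely $(M,\alpha)$. The only genuine subtleties are the bookkeeping of the $2\pi$ and sign conventions in this last identification and the regularity assumption guaranteeing that the quotient $M/S^1$ is a manifold rather than an orbifold; the differential-form computations in the first two steps are formal consequences of the connection structure.
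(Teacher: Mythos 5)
The paper does not actually prove this statement: it is quoted as a classical theorem and attributed to Boothby--Wang \cite{BoothbyWang}, so there is no in-paper proof to compare against. Your argument is the standard one and is essentially correct: the forward direction follows formally from the connection axioms together with the splitting $TM = \ker\alpha \oplus \ev{R}$, and the converse from descent of the basic form $d\alpha$ to the quotient plus Chern--Weil integrality of the curvature class. Two caveats, both of which you yourself flag, deserve emphasis. First, the hypothesis as literally stated only yields a \emph{locally} free $S^1$-action (the Reeb field is nowhere zero, so stabilizers are finite but need not be trivial); genuine freeness --- equivalently Boothby--Wang's regularity hypothesis --- is needed for $M/S^1$ to be a manifold rather than an orbifold, and is what the paper tacitly assumes (compare its weighted Sasakian example, where integer weights give a locally free action with quotient a weighted projective space). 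Second, the fundamental vector field of the induced $S^1$-action is $R/\tau$, where $2\pi/\tau$ is the Reeb period in the notation of \S\ref{sec-boothby}, so $\alpha$ becomes a connection form only after fixing the identification of $\mathrm{Lie}(S^1)$ with $\R$ accordingly, and the lattice in which $[\omega]$ lands depends on this normalization; this is precisely the $2\pi$/sign bookkeeping you defer, and it is the source of the factor $2\pi/\tau$ appearing in the paper's Proposition \ref{prop-boothby-identification}.
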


We call such a principal $S^1$-bundle $M \to N$ with connection form $\alpha$ a \emph{Boothy-Wang fibration}. Denote the period of the flow $\phi_t$ of the Reeb vector field $R$ by $2\pi/\tau$. 
We can identify a Reeb orbit $\{\phi_t(x)\}$ with $S^1$ via $e^{it\tau}\mapsto \phi_t(x)$. The transformation formula then yields that the integral of $\alpha$ over an arbitrary Reeb orbit $\{\phi_t(x)\}$ is equal to $2\pi/\tau$. 

\begin{proposition} \label{prop-boothby-identification}
  If $p:M \to N$ is a Boothby-Wang fibration, then $H(N) $ $\cong H(M, \CF)$ via $p^*$. If a compact Lie group $G$ acts on $M$, preserving $\alpha$, then the $G$-action descends to $N$ and we have $H_G(M, \CF) \cong H_G(N)$ via $p^*$. For any basic form $p^*\eta \in \Omega(M, \CF)$, fiberwise integration yields
  \[ \int_M \alpha \wedge p^*\eta = 2\pi/\tau \int_N \eta. \]
\end{proposition}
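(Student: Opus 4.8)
The plan is to exploit the fact that, for a Boothby-Wang fibration, the leaves of $\CF$ are exactly the fibers of $p$. First I would record the structural observation underlying all three claims: since $\alpha$ is a connection form and $R$ generates the free fiberwise $S^1$-action, the orbits of the Reeb flow coincide with the fibers of $p$. Hence a form $\beta \in \Omega(M)$ is $\CF$-basic precisely when it is horizontal and invariant, $\iota_R\beta = 0$ and $L_R\beta = 0$. Pullbacks $p^*\eta$ are automatically of this type, and conversely, since $p$ is a surjective submersion with connected fibers, every basic form descends uniquely to the base; this is the classical identification of basic forms on a principal bundle with forms on the base. As $p^*$ commutes with $d$, it is an isomorphism of cochain complexes $\Omega(N) \xrightarrow{\cong} \Omega(M, \CF)$, which on cohomology gives the first claim $H(N) \cong H(M, \CF)$.

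For the equivariant statement, I would first note that because $G$ preserves $\alpha$, each group element preserves the Reeb vector field $R$ (which is uniquely determined by $\alpha$), so the $G$-action commutes with the Reeb flow as recorded in Section \ref{sec K contact}. Thus $G$ carries fibers to fibers and descends to an action on $N = M/S^1$ for which $p$ is $G$-equivariant. With respect to the descended action the fundamental vector fields are $p$-related, $dp(\xi_M) = \xi_N$, so that $p^*\iota_{\xi_N} = \iota_{\xi_M}p^*$; combined with $p^*d = dp^*$ this shows that $p^*$ intertwines the Cartan differentials $d_G$. Since $p^*$ is moreover $G$-equivariant, tensoring with $S(\fg^*)$ and passing to $G$-invariants yields an isomorphism of complexes $\Omega_G(N) = (S(\fg^*)\otimes\Omega(N))^G \xrightarrow{\cong} (S(\fg^*)\otimes\Omega(M,\CF))^G = \Omega_G(M,\CF)$, and hence $H_G(M,\CF) \cong H_G(N)$ on cohomology.

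For the integration formula, I would use fiberwise integration $p_*$ along the $S^1$-fibers. By the projection formula $p_*(\alpha \wedge p^*\eta) = (p_*\alpha)\,\eta$, where $p_*\alpha \in \Omega^0(N)$ is the function whose value at a point is the integral of $\alpha$ over the corresponding Reeb orbit. As computed immediately above the proposition, this integral equals $2\pi/\tau$ for every orbit, so $p_*\alpha$ is the constant $2\pi/\tau$. Using $\int_M = \int_N p_*$ for the compatible orientations, with $M$ oriented by $\alpha\wedge(d\alpha)^n$, the base $N$ by $(d\alpha)^n = p^*\omega^n$, and the fiber by $\alpha$, integrating over $N$ gives $\int_M \alpha \wedge p^*\eta = \tfrac{2\pi}{\tau}\int_N \eta$.

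The only genuinely substantive points are the descent of $\CF$-basic forms to honest forms on $N$ and the compatibility of orientations in the last step; both rest on the Boothby-Wang hypothesis that the Reeb flow is exactly the free $S^1$-action whose orbits are the fibers of $p$, and once this is in hand each claim reduces to a standard fact about principal bundles.
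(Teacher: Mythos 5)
Your proposal is correct, and it matches the paper's (implicit) approach: the paper states this proposition without proof, treating it as the classical identification of basic forms on a principal $S^1$-bundle with forms on the base, and its only supplied ingredient is the computation, immediately preceding the statement, that $\alpha$ integrates to $2\pi/\tau$ over each Reeb orbit. Your argument fills in exactly these standard details — descent of basic forms and of the $G$-action, intertwining of the Cartan differentials via $p$-related fundamental vector fields, and the projection formula with $p_\ast\alpha \equiv 2\pi/\tau$ — so there is nothing to correct.
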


It now follows from Theorem \ref{thm-localization} and Proposition \ref{prop-boothby-identification} that we recover the standard localization theorem \cite{AtiyahBottMomentMap} for integral symplectic manifolds.

\begin{theorem} Suppose that $N$ is a symplectic manifold with a Hamiltonian action of the torus $G$ and suppose furthermore that the symplectic form on $N$ is integral and that the $G$-action lifts to the $S^1$-bundle $(M, \alpha)$ in the Boothby-Wang fibration $p:M\to N$, preserving $\alpha$. Then for any $\eta \in H_G(N)$, with $e_G(\nu F)$ denoting the (ordinary) equivariant Euler class of a connected component $F\subset N^G$, we have
 \[ \int_N \eta = \sum_{F \subseteq N^G} \int_F \frac{i_F^\ast \eta}{e_G(\nu F)}. \]
\end{theorem}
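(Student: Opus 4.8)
The plan is to deduce the theorem from the $K$-contact localization formula, Theorem~\ref{thm-localization}, applied to the total space $(M,\alpha)$, translating both sides through the Boothby-Wang dictionary of Proposition~\ref{prop-boothby-identification}. First I would check the hypotheses of Theorem~\ref{thm-localization}: the $G$-action preserves $\alpha$ by assumption, and $M$ is $K$-contact, since the Reeb field $R$ generates the free $S^1$-action and one may average an adapted metric to make $R$ Killing. As every Reeb orbit is then closed, in particular the $G$-fixed points have closed Reeb orbits, as noted in the remark following Theorem~\ref{thm-localization}; here the closure $T$ of the Reeb flow is this $S^1$.

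Next I would identify the critical set $C=\Crit\mu$. The fundamental vector field $\xi_M$ on $M$ projects to $\xi_N$ on $N$, and $R$ is vertical for $p$. Hence the condition $\xi_M(x)\in\R R(x)$ for all $\xi\in\fg$ defining $C$ holds exactly when $\xi_N(p(x))=0$ for all $\xi$, i.e.\ when $p(x)\in N^G$. Thus $C=p^{-1}(N^G)$, and its connected components are the restrictions $C_j=p^{-1}(F_j)$ of the bundle over the components $F_j\subseteq N^G$; writing $p_j:=p|_{C_j}$, each $C_j\to F_j$ is again a Boothby-Wang fibration for $i_j^*\alpha$ with the same Reeb period $2\pi/\tau$. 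Since $R$ is tangent to $C_j$, a short computation gives $\nu C_j\cong p^*(\nu F_j)$ as $G\times T$-equivariant bundles.

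The key step is to match Euler classes. Because $\nu C_j$ is pulled back from $N$, equipping it with the connection pulled back from a $G$-invariant connection on $\nu F_j$ yields a $G\times T$-invariant basic connection whose equivariant basic Euler form is $p^*$ of the ordinary $G$-equivariant Euler form of $\nu F_j$; under the isomorphism $p_j^*:H_G(F_j)\xrightarrow{\ \cong\ }H_G(C_j,\CF)$ of Proposition~\ref{prop-boothby-identification} this reads $e_G(\nu C_j,\CF)=p_j^*e_G(\nu F_j)$. Granting this, I would apply Theorem~\ref{thm-localization} to $\alpha\wedge p^*\eta$ and rewrite each summand, using $i_j^*p^*\eta=p_j^*i_{F_j}^*\eta$, as
\[ \int_{C_j}\frac{i_j^*\alpha\wedge p_j^*(i_{F_j}^*\eta)}{p_j^*e_G(\nu F_j)}=\int_{C_j} i_j^*\alpha\wedge p_j^*\!\left(\frac{i_{F_j}^*\eta}{e_G(\nu F_j)}\right). \]
Fiberwise integration for the fibration $C_j\to F_j$ (Proposition~\ref{prop-boothby-identification}) turns this into $\tfrac{2\pi}{\tau}\int_{F_j} i_{F_j}^*\eta/e_G(\nu F_j)$, while the left-hand side of Theorem~\ref{thm-localization} is $\int_M\alpha\wedge p^*\eta=\tfrac{2\pi}{\tau}\int_N\eta$ by the same proposition. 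Summing over $j$ and cancelling the common factor $2\pi/\tau$ yields the claimed formula.

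I expect the main obstacle to be this Euler-class identification $e_G(\nu C_j,\CF)=p_j^*e_G(\nu F_j)$: one must verify that the equivariant \emph{basic} Euler class, defined via the closure $T$ of the Reeb flow and living a priori in $H_G(C_j,\CF)$, corresponds under the Boothby-Wang isomorphism exactly to the ordinary equivariant Euler class on $F_j$. Once $\nu C_j$ is recognized as a pullback and the connection is chosen accordingly, this comes down to comparing the two Cartan-model representatives, which is routine.
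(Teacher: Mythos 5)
Your proposal is correct and follows essentially the same route as the paper: apply Theorem \ref{thm-localization} on the total space $M$, identify $\Crit\mu$ with $p^{-1}(N^G)$, establish $e_G(\nu C_j,\CF)=p^*e_G(\nu F_j)$ via the pulled-back connection and compatibility of the Weil homomorphism with pullback, and convert integrals through Proposition \ref{prop-boothby-identification}. The only difference is that you spell out the verification of the hypotheses and the critical-set identification in more detail than the paper does.
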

\proof
Note that $\Crit \mu/S^1$ is exactly the fixed point set $N^G$. Denote by $F_j$ the connected component of $N^G$ that is obtained as $C_j /S^1$. 
It is $p^*\nu F_j \simeq \nu C_j$ and if $\theta$ is a $G$-invariant connection form on the bundle of oriented orthonormal frames of $\nu F_j$, then $\bar p^* \theta$ is a basic $G$-invariant connection form on the bundle of oriented orthonormal frames of $p^* \nu F_j$, where $\bar p (x,v):= v$. 
The Weil homomorphism is compatible with pullback such that we obtain $p^* e_G(\nu F_j)=e_G(\nu C_j,\CF)$, where the right hand side denotes the equivariant basic Euler class of $\nu C_j$.  
Applying Theorem \ref{thm-localization} and Proposition \ref{prop-boothby-identification}, we have
\allowdisplaybreaks
\begin{align*}
  \int_N \eta =\tau/(2\pi)\int_M \alpha \wedge p^*\eta =\tau/(2\pi) \sum_{C_j \subseteq \Crit \mu} \int_{C_j} \frac{i_j^\ast (\alpha \wedge \eta)}{e_G(\nu C_j,\CF)} = \sum_{F \subseteq N^G} \int_{F} \frac{i_F^\ast \eta}{e_G(\nu F)}. \qed
\end{align*}

Suppose that $0$ is a regular value of the contact moment map $\mu$. Then $0$ is also a regular value of the symplectic moment map $\tilde \mu$ that pulls back to $-\mu$ and vice versa. Denote by $M_0$ and $N_0$ the contact and symplectic quotients, respectively. We have the commutative diagram
\[ \begin{array}{ccc}
  H_G(M, \CF) & \stackrel{\cong}{\to} & H_G(N) \\
  \downarrow & & \downarrow \\
  H(M_0, \CF_0) & \stackrel{\cong}{\to} & H(N_0)
\end{array} \]

With these identifications, in exactly the same manner as the proof of the previous theorem, we also recover the usual Jeffrey-Kirwan residue theorem 
\cite{JeffreyKirwan95, JeffreyKirwan96}.

\begin{theorem} Suppose that $N$ is a symplectic manifold with a Hamiltonian action of a torus $G$. 
Suppose furthermore that the symplectic form on $N$ is integral and that the $G$-action lifts to the $S^1$-bundle $(M, \alpha)$ in the Boothby-Wang fibration $p:M\to N$, preserving $\alpha$. Let $\tilde \mu$ denote the symplectic moment map that pulls back to $-\mu$ and assume that 0 is a regular value of $\tilde \mu$. Denote the induced symplectic form on the symplectic quotient $N_0$ by $\omega_0$. For any $\eta \in H_G(N)$, we denote its image under the Kirwan map by $\eta_0$. We have
  \[ \int_{N_0} \eta_0  \wedge e^{id\alpha_0} = \frac{n_0}{\vol(G)}
    \jkres\left( \sum_{F \subseteq N^G} e^{i\ev{\tilde \mu(F), \phi}} \int_{F} \frac{i_F^\ast \eta(\phi)  \wedge e^{i\omega}}{e_G(\nu F)} [d\phi] \right). \]
\end{theorem}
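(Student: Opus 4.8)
The plan is to apply the contact residue formula, Theorem \ref{thm-residue}, to the total space $M$ and translate each term into the corresponding object on $N$, exactly as in the proof of the localization analogue just above. The first step is to record the Boothby-Wang dictionary. As before, $\Crit\mu$ projects onto $N^G$, each component $C_j$ mapping to $F_j := C_j/S^1$, and compatibility of the Weil homomorphism with pullback gives $p^* e_G(\nu F_j) = e_G(\nu C_j,\CF)$. Since $p^*\tilde\mu = -\mu$, the moment map values satisfy $\mu(C_j) = -\tilde\mu(F_j)$, so that the exponential weight $e^{-i\ev{\mu(C_j),\phi}}$ turns into $e^{i\ev{\tilde\mu(F_j),\phi}}$. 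Moreover $d\alpha = p^*\omega$ and, under the isomorphism $p^*: H_G(N)\xrightarrow{\cong} H_G(M,\CF)$, the class $\eta$ is identified with $p^*\eta$, whence $i_j^*(\alpha\wedge\eta(\phi)\wedge e^{id\alpha}) = \alpha|_{C_j}\wedge p^*\!\left(i_F^*\eta(\phi)\wedge e^{i\omega}\right)$.

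Next I would observe that the contact quotient $M_0=\mu^{-1}(0)/G$ is itself a Boothby-Wang fibration $p_0: M_0\to N_0$ over the symplectic quotient $N_0=\tilde\mu^{-1}(0)/G$. Indeed $R$ is tangent to $\mu^{-1}(0)$, because $\mu$ is Reeb-invariant (the $G$-action and the Reeb flow commute, so $[R,X_M]=0$ for all $X\in\fg$); hence the Reeb flow descends to $M_0$ with the \emph{same} period $2\pi/\tau$, and the quotient contact form satisfies $d\alpha_0 = p_0^*\omega_0$. Because $S^1$ acts freely and $G$-equivariantly on $\mu^{-1}(0)$, the regular isotropy of $G$ on $\mu^{-1}(0)$ coincides with that on $\tilde\mu^{-1}(0)$, so the two values of $n_0$ agree. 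Finally, the commutative square displayed above identifies $\eta_0\in H(M_0,\CF_0)$ with its image in $H(N_0)$, and under this identification $\int_{N_0}\eta_0\wedge e^{id\alpha_0}$ means $\int_{N_0}\eta_0\wedge e^{i\omega_0}$.

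With the dictionary in place the computation is a term-by-term substitution. Applying Proposition \ref{prop-boothby-identification} to $p_0: M_0\to N_0$ turns the left-hand side of Theorem \ref{thm-residue} into $(2\pi/\tau)\int_{N_0}\eta_0\wedge e^{i\omega_0}$. On the right-hand side, fiberwise integration over the Reeb orbit (Proposition \ref{prop-boothby-identification}, now for $p:M\to N$) turns each summand $\int_{C_j} i_j^*(\alpha\wedge\eta(\phi)\wedge e^{id\alpha})/e_G(\nu C_j,\CF)$ into $(2\pi/\tau)\int_{F_j} i_F^*\eta(\phi)\wedge e^{i\omega}/e_G(\nu F)$. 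Pulling the common constant $2\pi/\tau$ out of $\jkres$ by linearity, Theorem \ref{thm-residue} reads
\[ (2\pi/\tau)\int_{N_0}\eta_0\wedge e^{i\omega_0} = (2\pi/\tau)\frac{n_0}{\vol(G)}\jkres\left(\sum_{F\subseteq N^G} e^{i\ev{\tilde\mu(F),\phi}}\int_F \frac{i_F^*\eta(\phi)\wedge e^{i\omega}}{e_G(\nu F)}[d\phi]\right), \]
and cancelling the factor $2\pi/\tau$ from both sides yields the asserted identity.

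The step I expect to be the main obstacle is the bookkeeping that establishes $p_0:M_0\to N_0$ as a Boothby-Wang fibration whose Reeb period equals that of $p:M\to N$, together with the verification that the two constants $n_0$ genuinely coincide; once the Reeb flow is shown to descend to $M_0$ preserving its period and the isotropy comparison is carried out, everything else is a direct substitution into the contact residue formula and an application of the linearity of $\jkres$.
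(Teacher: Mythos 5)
Your overall strategy is the same as the paper's: the paper disposes of this theorem with the single sentence that it follows ``in exactly the same manner as the proof of the previous theorem,'' i.e.\ by applying Theorem \ref{thm-residue} to $M$ and translating every ingredient through the Boothby--Wang dictionary, and your treatment of the right-hand side (components $C_j \to F_j$, the identity $p^*e_G(\nu F_j) = e_G(\nu C_j,\CF)$, the sign flip $\mu(C_j) = -\tilde\mu(F_j)$, the factor $2\pi/\tau$ from fiberwise integration) is correct. The genuine gap lies exactly in the two ``bookkeeping'' claims you single out at the end: both are false in general. It is not true that the Reeb flow descends to $M_0$ with the same period $2\pi/\tau$, and it is not true that the regular isotropy of $G$ on $\mu^{-1}(0)$ coincides with that on $\tilde\mu^{-1}(0)$; your justification (``because $S^1$ acts freely and $G$-equivariantly on $\mu^{-1}(0)$'') is a non sequitur, because an element $g \in G$ can fix $p(x)$ while translating $x$ along its Reeb fiber. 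Concretely, take $M = S^3 \subset \C^2$ with the standard contact form (the Hopf fibration over $N = \C P^1$, Reeb period $2\pi$) and let $G = S^1$ act by $\lambda\cdot(z_0,z_1) = (\lambda z_0, \lambda^{-1}z_1)$. Then $\mu(z) = |z_0|^2 - |z_1|^2$ and $G$ acts \emph{freely} on the Clifford torus $\mu^{-1}(0)$, so the contact $n_0$ equals $1$; but the induced action on $\C P^1$ has kernel $\{\pm 1\}$, so the generic stabilizer on $\tilde\mu^{-1}(0)$ has order $2$. Indeed $-1 \in G$ acts on $M$ as the time-$\pi$ Reeb flow, and consequently the Reeb flow on $M_0$ has period $\pi$, not $2\pi$.

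These two failures are correlated, and it is precisely their cancellation that saves the theorem — but your proof, as written, misses it. Writing $\Gamma_M \subseteq \Gamma_N$ for the generic stabilizers of $G$ on $\mu^{-1}(0)$ and on $\tilde\mu^{-1}(0)$, the assignment $g \mapsto t$ with $g\cdot x = \phi_t(x)$ embeds $\Gamma_N/\Gamma_M$ into the Reeb circle, say with order $k$, so that $|\Gamma_N| = k\,|\Gamma_M|$ and the Reeb period on $M_0$ is $2\pi/(\tau k)$. Running your substitution with the corrected factors, the left-hand side of Theorem \ref{thm-residue} equals $\frac{2\pi}{\tau k}\int_{N_0}\eta_0\wedge e^{i\omega_0}$, while the right-hand side carries the contact constant $n_0 = |\Gamma_M|$ together with the factor $\frac{2\pi}{\tau}$; the $k$'s recombine to give the stated formula with constant $|\Gamma_N| = k\,|\Gamma_M|$, i.e.\ with $n_0$ equal to the generic stabilizer on $\tilde\mu^{-1}(0)$, which is what Jeffrey--Kirwan's $n_0$ is. Your argument instead outputs the formula with the contact constant $|\Gamma_M|$, which in the example above is wrong by a factor of $2$. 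So either carry out this $k$-bookkeeping explicitly, or add the hypothesis $\Gamma_M = \Gamma_N$ (e.g.\ that $G$ acts effectively on $\tilde\mu^{-1}(0)$ whenever it acts effectively on $\mu^{-1}(0)$), under which your two claims — and hence the rest of your substitution — become correct.
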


\begin{remark}
	Note that we obtain the residue formula as stated in \cite{JeffreyKirwan95, JeffreyKirwan96}, without the sign that was added in \cite{JK98intersection} due to an error in \cite[Section~5]{JeffreyKirwan95}. 
	 The situation in \cite[Section~5]{JeffreyKirwan95} - in the therein defined notation - describes as follows. 
	 The only term from $e^{idz'(\theta)}$ that contributes to the integral is $(idz'(\theta))^s/s!=i^s (-1)^{s(s+1)/2} \Omega \wedge [dz']$, which causes a sign to appear in the computation. 
	 The integral is taken over a neighborhood $\CO$ of $\mu^{-1}(0)$, which is canonically oriented via the symplectic form $q^* \omega_0 + d(z'(\theta))$. 
	 The integral is computed by first taking the integral in $\fk^*$-direction, oriented via $[dz']$, followed by fiberwise integration on $\mu^{-1}(0)$, where the fibers are oriented via $\Omega$. 
	 An integral over the symplectic quotient $\CM_X$ remains; $\CM_X$ is canonically oriented via $\omega_0$. The product of these orientations differs from the canonical orientation on $\CO$ by a factor $(-1)^{s(s+1)/2}$. 
	 Hence, taking into account this change of orientation removes the additional sign (cf. also the proof of Proposition \ref{prop-q0}). For this reason, the formula as stated in \cite{JeffreyKirwan95, JeffreyKirwan96} is the correct formula to consider.
\end{remark}

\subsection{Weighted Sasakian Structures on Odd Spheres}
	For $n \geq 1$ and $w \in \R^{n+1}$, $w_j>0$, consider the sphere 
	\[S^{2n+1}= \left\lbrace z=(z_0, ..., z_n) \in \C^{n+1} \mid \sum\nolimits_{j=0}^n |z_j|^2 = 1 \right\rbrace \subset \C^{n+1},\]
	endowed with the following contact form $\alpha_w$ and corresponding Reeb vector field $R_w$
	\[\alpha_w =\frac{\tfrac{i}{2} \left( \sum_{j=0}^n z_j d\bar z_j - \bar z_j d z_j \right)}{\sum_{j=0}^n w_j|z_j|^2} , \quad R_w = i\left( \sum_{j=0}^{n} w_j(z_j \tfrac{\partial}{\partial z_j} - \bar z_j \tfrac{\partial}{\partial \bar z_j}) \right).\]
	$(S^{2n+1}, \alpha_w)$ is called a \emph{weighted Sasakian structure on $S^{2n+1}$}, cf.~\cite[Example~7.1.12]{boyer2008sasakian}. In particular, $(M, \alpha)=(S^{2n+1}, \alpha_w)$ with the metric induced by the embedding $M \hookrightarrow \mathbb{C}^{n+1}$  is a $K$-contact manifold. 
	For $w=(1, ..., 1)$, we obtain the standard contact form on the sphere. Notice that the underlying contact \emph{structure} $\ker \alpha_w$ is independent of the choice of weight $w$.
	The flow of $R_w$ is given by $\phi_t (z) = (e^{itw_0}z_0, ..., e^{itw_n}z_{n})$. 	
	Furthermore, let $G=S^1$ act (freely) on $S^{2n+1}$ with weights $\beta = (\beta_0,...\beta_n)\in \mathbb{Z}^{n+1}$, that is, by $\lambda \cdot z = (\lambda^{\beta_0} z_0, ...,\lambda^{\beta_n} z_n)$. 
	The fundamental vector field $X$ corresponding to $1 \in \R \simeq \mathfrak{s}^1$ is given by
	\[X(z) = i\left( \sum_{j=0}^{n} \beta_j(z_j \tfrac{\partial}{\partial z_j} - \bar z_j \tfrac{\partial}{\partial \bar z_j})\right)\]
	and we compute the contact moment map to be 
	\[ \mu (z) = \frac{ \sum_{j=0}^{n} \beta_j|z_j|^2 }{\sum_{j=0}^n w_j|z_j|^2}. \]
\begin{lemma}\label{lemma eq bas coh sphere}
	The  equivariant basic cohomology of $(M, \alpha)=(S^{2n+1}, \alpha_w)$ is given, as $(S(\fg^*)=\R[u])$-algebra, by 
	\[H_G(M, \CF) \cong \frac{\R[u,s]}{\ev{\prod_{j=0}^n (\beta_ju + w_j s)}}.\]
\end{lemma}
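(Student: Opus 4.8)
The plan is to reduce the computation to the equivariant cohomology of the standard torus acting on the sphere, which is classical, and then to cut this down to the Reeb-basic $G$-theory by restricting the equivariant parameters to a $2$-plane. First I would introduce the full diagonal torus $\hat T=(S^1)^{n+1}$ acting on $S^{2n+1}\subset\C^{n+1}$ by $(\lambda_0,\dots,\lambda_n)\cdot z=(\lambda_0 z_0,\dots,\lambda_n z_n)$. This action preserves $\alpha_w$, hence $\CF$, and both $G$ and the Reeb flow $\{\phi_t\}$ factor through it: writing $\hat\ft=\R^{n+1}$ with dual basis $x_0,\dots,x_n\in\hat\ft^\ast$, the inclusion $\fg\hookrightarrow\hat\ft$ sends the generator to $\beta=(\beta_0,\dots,\beta_n)$ and the Reeb field is $R_w=w=(w_0,\dots,w_n)$. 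Assuming $\beta$ and $w$ are linearly independent (the generic case), they span a plane $P\subseteq\hat\ft$; letting $u,s$ denote the basis of $P^\ast$ dual to $\beta,w$, the restriction $S(\hat\ft^\ast)\to S(P^\ast)=\R[u,s]$ is the algebra map $x_j\mapsto\beta_j u+w_j s$. Here $u$ is the parameter of $S(\fg^\ast)$ and $s=[d\alpha]$ is the transverse symplectic class.

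The heart of the argument is the comparison isomorphism
\[ H_G(M,\CF)\;\cong\;H_{\hat T}(M)\otimes_{S(\hat\ft^\ast)}\R[u,s] \]
of graded $\R[u,s]$-algebras, where $\R[u,s]$ is an $S(\hat\ft^\ast)$-algebra via the restriction above. This packages two facts: restricting the $\hat T$-action to the subgroup $G$ accounts for the variable $u$, while passing from $\hat T$-equivariant cohomology to $\CF$-basic cohomology amounts to restricting the remaining equivariant parameters to the Reeb line $\R R$, i.e.\ $x_j\mapsto w_j s$. I would justify this either through the transverse Cartan model, or by first relating $H_G(M,\CF)$ to the honest torus-equivariant cohomology $H_{G\times T}(M)$ (with $T$ the Reeb closure) via the comparison available for $K$-contact manifolds and then restricting along $G\times T\to\hat T$.

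It then remains to compute $H_{\hat T}(S^{2n+1})$ classically. Using the $\hat T$-equivariant homotopy equivalence $\C^{n+1}\setminus\{0\}\simeq S^{2n+1}$ and the Thom--Gysin sequence of $\{0\}\hookrightarrow\C^{n+1}$,
\[ \cdots\to H^{\ast-2(n+1)}_{\hat T}(\mathrm{pt})\xrightarrow{\ \cup e\ }H^\ast_{\hat T}(\C^{n+1})\to H^\ast_{\hat T}(S^{2n+1})\to\cdots, \]
where $e=\prod_{j}x_j$ is the equivariant Euler class of $\C^{n+1}$ at the origin and a non-zero-divisor in $S(\hat\ft^\ast)=\R[x_0,\dots,x_n]$, one obtains $H_{\hat T}(S^{2n+1})\cong\R[x_0,\dots,x_n]/\langle x_0\cdots x_n\rangle$. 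Tensoring down along $x_j\mapsto\beta_j u+w_j s$, and using that the cyclic module $\R[x]/\langle\prod x_j\rangle$ tensors to $\R[u,s]/\langle\prod(\beta_j u+w_j s)\rangle$, gives
\[ H_G(M,\CF)\cong\R[u,s]\big/\big\langle\textstyle\prod_{j=0}^n(\beta_j u+w_j s)\big\rangle. \]
As a check, setting $u=0$ recovers $\R[s]/\langle s^{n+1}\rangle$, the basic cohomology $H(M,\CF)$ of the transverse weighted projective space, consistent with Lemma \ref{lemma-poincare}.

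The main obstacle is the comparison isomorphism of the second paragraph, and within it the Reeb/$s$-direction specifically. For irrational weights $w$ the Reeb closure $T$ equals all of $\hat T$ and $S^{2n+1}$ is \emph{not} equivariantly formal, so one cannot simply invoke a K\"unneth- or restriction-type statement for subtori. What must be argued with care is that restricting the big-torus equivariant theory to the one-dimensional Reeb direction reproduces the basic cohomology of the one-dimensional Reeb foliation $\CF$; once this is in hand, the remaining steps are formal.
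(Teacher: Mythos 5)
Your overall architecture (Thom--Gysin sequence for $\{0\}\hookrightarrow\C^{n+1}$, then read off the answer from an Euler class) parallels the paper's, but the step you yourself flag as ``the main obstacle'' is a genuine gap, not a technicality, and your proposal does not close it. The comparison isomorphism $H_G(M,\CF)\cong H_{\hat T}(M)\otimes_{S(\hat\ft^\ast)}\R[u,s]$ hides two distinct problems. First, the $\hat T$-action on $S^{2n+1}$ has no fixed points, so $H_{\hat T}(M)$ is a torsion module and the action is not equivariantly formal; naive base change along a restriction of equivariant parameters then fails in general, and one only has an Eilenberg--Moore-type spectral sequence $\mathrm{Tor}_{S(\hat\ft^\ast)}\bigl(H_{\hat T}(M),S(\fk^\ast)\bigr)\Rightarrow H_K(M)$ with possible higher Tor corrections (already the free $S^1$-action on $S^1$, restricted to the trivial subgroup, shows the naive tensor product gives the wrong answer). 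Second, and more fundamentally, your two-plane $P=\mathrm{span}(\beta,w)$ is not the Lie algebra of a closed subtorus of $\hat T$ when $w$ is irrational, so ``restricting the equivariant parameters to $P$'' only makes sense in the Cartan model for Lie algebra actions ($\fg$-dga's in the sense of Guillemin--Sternberg), and the identification of $H_G(M,\CF)$ with that Lie-algebra equivariant cohomology $H_{\fg\oplus\R R_w}(M)$ --- which rests on $\alpha$ being an algebraic connection for the Reeb direction, the type (C) property --- is itself the key nontrivial input. Neither of the two justifications you sketch (``transverse Cartan model'' or passing through $H_{G\times T}(M)$) escapes this: the second route needs exactly the same unproved base change one step later.

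For contrast, the paper sidesteps base change entirely: it first invokes $H_G(M,\CF)\cong H_{\fg\oplus\R R_w}(M)$ (citing \cite[\S~4.6]{GS99} and \cite[Proposition~3.9]{GT2016equivariant}), extends the $G\times T$-action to $\C^{n+1}$, and runs equivariant Morse--Bott theory for the invariant function $f=\|\Psi-\tfrac12\|^2$ directly in the two-parameter theory $H_{\fg\oplus\R R_w}$; the resulting short exact Thom--Gysin sequence, together with the computation that the Euler class of the negative normal bundle at the origin equals $(\tfrac{1}{2\pi})^{n+1}\prod_j(u\beta_j+sw_j)$, gives the lemma with no tensor product ever taken. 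Your route could in fact be repaired: import the same identification $H_G(M,\CF)\cong H_{\fg\oplus\R R_w}(M)$; observe that when the subgroup generated by $G$ and the Reeb flow is dense in $\hat T$, the Cartan complex for $\fg\oplus\R R_w$ is literally the base change of the Cartan complex for $\hat T$ along $x_j\mapsto\beta_ju+w_js$; then resolve $H_{\hat T}(M)=\R[x_0,\dots,x_n]/\langle x_0\cdots x_n\rangle$ by the length-one free resolution given by multiplication by $x_0\cdots x_n$, and use that its image $\prod_j(\beta_ju+w_js)$ is a non-zero-divisor in $\R[u,s]$ (here $w_j>0$ enters) to kill all higher Tor and collapse the spectral sequence. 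But that Tor-vanishing argument, together with the density caveat and the non-generic case where $\beta$ and $w$ are proportional (which your setup excludes), is precisely the content missing from the proposal.
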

\begin{proof}
	To compute the equivariant basic cohomology of $(M, \alpha)$, consider the diagonal $S^1$-action on $\C^{n+1}$: $\lambda \cdot z := (\lambda z_0, ..., \lambda z_n)$. 
	This action is Hamiltonian with (symplectic) moment map $\Psi(z)=\frac{1}{2}\sum_j |z_j|^2$ and we obtain $M$ as $M= \Psi^{-1}(\frac{1}{2})$. The $G\times T$-action and, hence, $R$ can be extended to all of $\C^{n+1}$. 
	Consider the $G\times T$-invariant function $f:= ||\Psi-\frac{1}{2}||^2 $. Its critical set is $\{0\} \ \dot \cup \ M$,  and the critical values are $f(0)=1/4$, $f(M)=0$. 
	Hence, $M^+_{ \{0\} }:=f^{-1}((-\infty , f(0)+\epsilon ]) \cong \C^{n+1}$ and $M^-_{ \{0\} }:=f^{-1}((-\infty , f(0)-\epsilon ]) \cong M^+_M \cong M$. The Hessian $H$ of $f$ at 0 is given by $- \operatorname{id}$, which is non-degenerate and has Morse index $2(n+1)$. 
	For $z \in M$, the normal direction (to $M$) is spanned by $Y := \sum z_j \partial_{z_j}+ \bar z_j \partial_{\bar z_j}$ and $H_z(Y,Y)=2$, 
	which yields that $H_z$ is non-de\-gene\-rate in normal direction. It follows that $f$ is a $G \times T$-invariant Morse-Bott function.
	Note that $H_G(M, \CF) \cong H_{\fg \oplus \R R_w}(M)$ as an $H_G$-algebra (by \cite[\S~4.6]{GS99} or \cite[Proposition~3.9]{GT2016equivariant}), 
	where $H_{\fg \oplus \R R_w}(M)$ denotes the $\fg \oplus \R R_w$-e\-qui\-va\-ri\-ant cohomology of the $\fg \oplus \R R_w$-dga $\Omega^*(M)$, cf.~\cite[\S~2]{GS99}, \cite[\S~4]{GNTequivariant} or \cite[\S~3.2]{GT2016equivariant}.  
It follows from equivariant Morse-Theory with $f$ (cf.~\cite{Kirwan} and also \cite[\S~5.2]{Casselmann2016}) that we have a short exact equivariant Thom-Gysin sequence
\begin{align*}
	0 \rightarrow  H^{*-2(n+1)}_{\fg \oplus \R R_w} (\{0\}) \rightarrow H^*_{\fg \oplus \R R_w}(M^+_{ \{0\} } ) \rightarrow H^*_{\fg \oplus \R R_w}(M^-_{ \{0\} } )&\rightarrow 0\\
	0 \rightarrow  H^{*-2(n+1)}_{\fg \oplus \R R_w} (\{0\}) \rightarrow H^*_{\fg \oplus \R R_w}( \C^{n+1}) \rightarrow H^*_{\fg \oplus \R R_w}(M)&\rightarrow 0.
\end{align*}

The composition with the restriction \[ H_{\fg \oplus \R R_w}^{\ast-2(n+1)}(\{0\}) \to H_{\fg \oplus \R R_w}(\C^{n+1}) \stackrel{\cong}{\to} H_{\fg \oplus \R R_w}(\{0\}) \] is multiplication by the equivariant Euler class of the negative normal bundle to $0 \hookrightarrow \C^{n+1}$, which is computed to be equal to $(\frac{1}{2\pi})^{n+1}\prod_j (u\beta_j + sw_j)$. The short exact sequence then yields the claim.
\end{proof}

\begin{remark}
If all $w_j$ are positive integers, the Reeb vector field induces a locally free $S^1$-action on $M$ and $M/S^1$ is the weighted projective space $\BP(w)= (\C^{n+1} \setminus 0) / \sim$, where $(z_0, \dots, z_n) \sim (\lambda^{w_0} z_0, \dots, \lambda^{w_n} z_n)$ for any $\lambda \in \C^*$ (cf.~\cite[Example~7.1.12; \S~4.5]{boyer2008sasakian}). 
Then 
$H_G(\BP(w))\cong H_G(M, \CF) \cong \frac{\R[u,s]}{\ev{\prod_{j=0}^n (\beta_ju + w_j s)}}$.
\end{remark}
	
\begin{lemma}\label{lem crit mu sphere}
	Set $\lambda_j:= \frac{\beta_j}{w_j}$ and $J_j:= \{ l \in \{0,...,n\} \mid \lambda_l=\lambda_j\}$. $\Crit \mu$ consists of at most $ n+1$ components $D_j$, specified by $D_j = \{ z \in M \mid z_l = 0 \ \forall \ l \in \{0,...,n\}\setminus J_j \}$. 
	
	If the weights $\beta_j$ of the $G$-action are such that $\lambda_j\neq \lambda_l$ for every $j\neq l$, then $\Crit \mu$ consists of $n+1$ circles $C_j = \{ z \in M \mid z_l = 0 \ \forall \ l\neq j\}$, and $\mu(C_j) = \beta_j/w_j=\lambda_j$.
	Furthermore, $H_G(C_j, \CF) \cong \R[u]$, and the restriction $H_G(M, \CF) $ $\to H_G(C_j, \CF)$ is given by $s \mapsto -\beta_j u / w_j$. If we denote the inclusion $C_j \rightarrow M$ by $i_j$, then $\int_{C_j}\iota_j^*\alpha_w = \frac{2\pi}{w_j}$. 
	The equivariant basic Euler class $e_j$ of the normal bundle to $C_j$ in $M$ is given by
	\[ e_j = \left(\frac{u}{2\pi}\right)^{n} \prod_{k\neq j} (\beta_k - \beta_j w_k / w_j) .\]
\end{lemma}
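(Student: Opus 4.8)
The plan is to establish the assertions in turn, using throughout the explicit formulas for $X$, $R_w$, $\alpha_w$, and $\mu$, together with the identification $H_G(-,\CF)\cong H_{\fg\oplus\R R_w}(-)$ employed in Lemma~\ref{lemma eq bas coh sphere}. I would begin with the critical set, invoking the characterization $\Crit\mu=\{z : \widetilde\fg_z=\fg\}$. As $G=S^1$ has $\fg=\R X$, the condition $\widetilde\fg_z=\fg$ says precisely that $X(z)\in\R R_w(z)$; comparing coefficients in the explicit expressions, $X(z)=cR_w(z)$ forces $\beta_l z_l=cw_l z_l$ for all $l$, so that $\lambda_l=c$ for every index with $z_l\neq0$. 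Thus a point is critical exactly when all its nonzero coordinates share one value of $\lambda_l$, which produces the components $D_j$ indexed by the level sets $J_j$; when the $\lambda_j$ are pairwise distinct, $J_j=\{j\}$ and $D_j$ collapses to the coordinate circle $C_j$. Evaluating $\mu$ there (only $z_j\neq0$, $|z_j|=1$) gives $\mu(C_j)=\beta_j/w_j=\lambda_j$ at once.

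Next I would compute $H_G(C_j,\CF)$ and the restriction map. Since $C_j$ is a single Reeb orbit, $\Omega(C_j,\CF)$ reduces to constants and $H(C_j,\CF)=\R$ in degree $0$; through $H_G(C_j,\CF)\cong H_{\fg\oplus\R R_w}(C_j)$ I would observe that the abelian algebra $\fg\oplus\R R_w$ acts on the circle $C_j$ with the subalgebra $\fh=\R(w_jX-\beta_jR_w)$ acting trivially (its fundamental vector field vanishes on $C_j$) and the complementary direction acting locally freely with a single leaf, whence $H_{\fg\oplus\R R_w}(C_j)\cong S(\fh^*)\cong\R[u]$. The restriction of equivariant parameters is dual to $\fh\hookrightarrow\fg\oplus\R R_w$: from $u(w_jX-\beta_jR_w)=w_j$ and $s(w_jX-\beta_jR_w)=-\beta_j$ one reads off $s\mapsto-\beta_j u/w_j$. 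For the period integral I would parametrize $C_j$ by $z_j=e^{i\phi}$, so that $i_j^*\alpha_w=d\phi/w_j$ and $\int_{C_j}i_j^*\alpha_w=2\pi/w_j$ (equivalently, this is the length of the Reeb period on $C_j$).

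Finally, for the Euler class I would split $\nu C_j=\bigoplus_{k\neq j}L_k$ into the coordinate lines. Because $H^2(C_j,\CF)=0$, the ordinary basic Chern contributions $c^j_i$ in formula~\eqref{eq e G is prod} vanish and $e_G(\nu C_j,\CF)$ becomes a pure product of weights. The $(\fg\oplus\R R_w)$-equivariant Chern class of $L_k$ is $\tfrac{1}{2\pi}(\beta_k u+w_k s)$, exactly the factor appearing in the negative normal Euler class in Lemma~\ref{lemma eq bas coh sphere}; applying the substitution $s\mapsto-\beta_j u/w_j$ from the previous step gives $\tfrac{u}{2\pi}(\beta_k-\beta_j w_k/w_j)$ for each factor, and the product over $k\neq j$ yields the claimed $e_j$. (The same value results from the prescription in \eqref{eq e G is prod}: the weight of $w_jX-\beta_jR_w$ on $L_k$ is $w_j\beta_k-\beta_j w_k$, which after extension by zero on $\R R_w$ and restriction to $\fg$ becomes $(\beta_k-\beta_j w_k/w_j)u$.) I expect the crux to be the second step, namely pinning down the isotropy subalgebra $\fh$ and the duality that expresses the Reeb parameter $s$ through $u$ along $C_j$: this one substitution simultaneously controls the restriction map and the Euler class, and once it is secured the remaining verifications are routine.
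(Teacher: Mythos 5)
Your proposal is correct, and most of its skeleton (coefficient comparison for $\Crit\mu$, parametrization $z_j=e^{i\varphi}$ for the period integral, splitting $\nu C_j$ into coordinate lines and substituting the relation for $s$) coincides with the paper's proof. The one step where you take a genuinely different route is the computation of $H_G(C_j,\CF)\cong\R[u]$ and of the restriction map $s\mapsto-\beta_ju/w_j$. The paper gets the relation by a contact-geometric trick: since $i_j^*d\alpha_w=0$ on the one-dimensional $C_j$, the exact class $[d_{\fg\oplus\R R_w}\alpha_w]\big|_{C_j}=[-\tfrac{\beta_j}{w_j}u-s]$ must vanish, which hands you the relation $s=-\beta_ju/w_j$ in one line without ever identifying the ring structure abstractly. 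You instead run the general orbit-structure argument (the same one the paper uses in Lemma \ref{lemma-support-orbit}): the generalized stabilizer $\fh=\R(w_jX-\beta_jR_w)$ acts trivially, the complement acts locally freely with $H_{\fk}(C_j)=\R$, so $H_{\fg\oplus\R R_w}(C_j)\cong S(\fh^*)$ and the $S((\fg\oplus\R R_w)^*)$-module map is restriction of polynomial functions to $\fh$; evaluating $u$ and $s$ on the generator $w_jX-\beta_jR_w$ then gives the relation. Your route buys a bit more: it actually proves the isomorphism $H_G(C_j,\CF)\cong\R[u]$ (which the paper asserts but only implicitly establishes through the same relation), and it makes transparent why the substitution that controls the restriction also controls the Euler class. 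What it costs is the need to justify that the module-structure map really is dual to $\fh\hookrightarrow\fg\oplus\R R_w$, i.e., that the augmentation ideal of the locally free factor dies in cohomology; this is standard (it is exactly $H_{\fk}(C_j)=\R$ concentrated in degree $0$), but it is the one point you should spell out, whereas the paper's $[d_G\alpha_w]=0$ computation needs no such structural input. For the Euler class itself, your weight computation $\tfrac{1}{2\pi}(\beta_ku+w_ks)$ per line bundle is the same answer the paper obtains via the Pfaffian of the canonical flat connection, and your parenthetical check against Equation \eqref{eq e G is prod} is consistent.
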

\begin{proof}
	For every $z \in \cup D_j$, we have $X(z)=\lambda_j R_w(z)$, which yields $\cup D_j \subset \Crit \mu$. If $z \in M \setminus \cup D_j$, then there are $k \neq j$ such that $z_k, z_j \neq 0$ and $\lambda_k \neq \lambda_j$. 
	It follows that, for every $\lambda \in \R$, $\beta_j(z_j \tfrac{\partial}{\partial z_j} - \bar z_j \tfrac{\partial}{\partial \bar z_j})+ \beta_k(z_k \tfrac{\partial}{\partial z_k} - \bar z_k \tfrac{\partial}{\partial \bar z_k})$ $ \neq$ $\lambda \left( w_j(z_j \tfrac{\partial}{\partial z_j} - \bar z_j \tfrac{\partial}{\partial \bar z_j})+ w_k(z_k \tfrac{\partial}{\partial z_k} - \bar z_k \tfrac{\partial}{\partial \bar z_k})\right)$. 
	Since $(\tfrac{\partial}{\partial z_l}, \tfrac{\partial}{\partial \bar z_l} )_{l=0}^n$ form a basis of $T_z\C^{n+1}$, they are linearly independent at $z$, hence, $X(z)\notin \R R(z)$.
	
	Now suppose that $\lambda_j\neq \lambda_l$ for every $j\neq l$.
	On $C_j$, it is $R_w=w_j(x_j \partial_{y_j}-y_j\partial_{x_j})$ and $X=\beta_j(x_j \partial_{y_j}-y_j\partial_{x_j})=\frac{\beta_j}{w_j}R_w$. $d\alpha_w$ is a 2-form, so $\iota_j^*d\alpha_w=0$. In $H_{\fg \oplus \R R_w}(C_j)$, we compute
	\[ 0=[d_{\fg \oplus \R R_w}\alpha_w]=[d\alpha_w-\iota_X\alpha_w u - \iota_{R_w} \alpha_w s]=[-\tfrac{\beta_j}{w_j}u-s],\]
	thus obtaining the restriction map $s \mapsto -\beta_j u / w_j$.
	
	$\nu C_j = \mathrm{span} \{\partial_{x_k}, \partial_{y_k} \mid k\neq j\} = \C^{n}\times C_j$ is a trivial bundle that is the product of the line bundles $\mathrm{span}(\partial_{x_k},\partial_{y_k}) \times C_j$. 
	Denote by $\theta_j$ the canonical flat connection on the bundle of oriented orthonormal frames of  $\mathrm{span}(\partial_{x_k},\partial_{y_k}) $ $\times C_j$. The $\fg \oplus \R R_w$-equivariant Euler class of $\nu C_j$ then is 
	\begin{align*}
		e_{\fg \oplus \R R_w}(\nu C_j) &=\prod_{k\neq j}  e_{\fg \oplus \R R_w}(\mathrm{span}(\partial_{x_k},\partial_{y_k}))=\prod_{k\neq j} \mathrm{Pf}(-u\iota_X\theta_k - s\iota_{R_w}\theta_k)\\
		&=\prod_{k\neq j}\tfrac{1}{2\pi}(u\beta_k+sw_k)=\left(\tfrac{1}{2\pi}\right)^{n}\prod_{k\neq j} (u\beta_k + w_k(-\beta_j u / w_j))\\
		&= \left(\frac{u}{2\pi}\right)^{n}\prod_{k\neq j} (\beta_k -w_k\beta_j  / w_j).
	\end{align*}
	On $C_j$, we have $|z_j|^2=1$ and $z_l=0$ for $l \neq j$. Hence, we can parametrize $C_j$ up to a zero set by $z_j=e^{i\varphi}$, $\varphi \in (0,2\pi)$. 
	Then $\iota_j^*\alpha_w = \frac{\tfrac{i}{2}(z_jd\bar z_j - \bar z_j dz_j)}{w_j} = \frac{d\varphi}{w_j}$ and $\int_{C_j}\iota_j^*\alpha_w = \int_0^{2\pi} \frac{1}{w_j}d\varphi= \frac{2\pi}{w_j}$.
\end{proof}
	
	With our localization formula, we can now compute the contact volume of weighted Sasakian structures on odd spheres. 
	This result is known and can also by obtained by combining the observation of Martelli-Sparks-Yau \cite{MSY06} that the volume of a toric Sasakian manifold is related to the volume of the truncated cone over its momentum image and a formula by Lawrence \cite{La91} for the volume of a simple polytope (cf. \cite[\S~6.2]{GNTlocalization}). 
	Goertsches-Nozawa-T\"oben also computed the same result via a basic ABBV-type localization formula with respect to the transverse action of $\ft / \R R_w$, cf.~\cite[Corollary~6.1]{GNTlocalization}. 
	
\begin{proposition}
	The contact volume of $(M, \alpha)=(S^{2n+1},\alpha_w)$ is given by 
	\[ \vol(M, \alpha)=\frac{1}{2^n n!} \int_M \alpha \wedge (d\alpha)^n =\frac{2\pi^{n+1}}{n! w_0 \cdots w_n}.\]
\end{proposition}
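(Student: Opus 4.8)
The plan is to reduce the volume computation to the localization formula (Theorem \ref{thm-localization}) applied to an auxiliary circle action. Since $\vol(M,\alpha)$ is intrinsic to $(S^{2n+1},\alpha_w)$ and does not refer to any $G$, I am free to let $G=S^1$ act with weights $\beta=(\beta_0,\dots,\beta_n)$ chosen generically, so that the ratios $\lambda_j=\beta_j/w_j$ are pairwise distinct; then Lemma \ref{lem crit mu sphere} applies and $\Crit\mu$ consists of the $n+1$ circles $C_j$. Taking all $\beta_j$ nonzero there are no $G$-fixed points, so the hypotheses of Theorem \ref{thm-localization} are satisfied. I would apply the formula not to a polynomial class but to the (completed) class $\eta=e^{id_G\alpha}$, whose value is $e^{id_G\alpha}(\phi)=e^{id\alpha-i\ev{\mu,\phi}}$.

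First I would read off the two sides. The left-hand side $\int_M \alpha\wedge e^{id\alpha}\,e^{-i\ev{\mu,\phi}}$ is entire in $\phi$, and at $\phi=0$ only the top-degree part survives, giving $\int_M\alpha\wedge e^{id\alpha}=\tfrac{i^n}{n!}\int_M\alpha\wedge(d\alpha)^n$. For the right-hand side I would feed in the data of Lemma \ref{lem crit mu sphere}: since $C_j$ is a circle, $i_j^\ast d\alpha=0$, so $i_j^\ast e^{id_G\alpha}(\phi)=e^{-i\lambda_j u}$ (writing $u$ for the coordinate on $\fg$), while $\int_{C_j}i_j^\ast\alpha=2\pi/w_j$ and $e_G(\nu C_j,\CF)=(u/2\pi)^n\prod_{k\neq j}(\beta_k-\beta_j w_k/w_j)$. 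Pulling the (pointwise constant) Euler class out of the integral over $C_j$ and using $w_j\prod_{k\neq j}(\beta_k-\beta_j w_k/w_j)=w_0\cdots w_n\prod_{k\neq j}(\lambda_k-\lambda_j)$, the right-hand side collapses to
\[ \frac{(2\pi)^{n+1}}{u^n\,w_0\cdots w_n}\sum_{j=0}^n \frac{e^{-i\lambda_j u}}{\prod_{k\neq j}(\lambda_k-\lambda_j)}. \]

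The crux, and the step I expect to be the main obstacle, is extracting the value at $u=0$. The localization identity guarantees that the left-hand side is analytic at $u=0$, so the apparent pole of order $n$ in the displayed sum must be removable; equivalently, the sum $\Sigma(u)=\sum_j e^{-i\lambda_j u}/\prod_{k\neq j}(\lambda_k-\lambda_j)$ must vanish to order $n$ at the origin. This is exactly the classical Lagrange/divided-difference identity $\sum_j \lambda_j^m/\prod_{k\neq j}(\lambda_k-\lambda_j)=0$ for $m<n$ and $=(-1)^n$ for $m=n$ at distinct nodes. Expanding $e^{-i\lambda_j u}$ in powers of $u$ and applying it yields $\Sigma(u)=\tfrac{i^n}{n!}u^n+O(u^{n+1})$, so the limit of the right-hand side as $u\to0$ is $(2\pi)^{n+1}i^n/(n!\,w_0\cdots w_n)$.

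Equating the two sides and cancelling the common factor $i^n/n!$ then gives $\int_M\alpha\wedge(d\alpha)^n=(2\pi)^{n+1}/(w_0\cdots w_n)$, whence $\vol(M,\alpha)=\tfrac{1}{2^n n!}(2\pi)^{n+1}/(w_0\cdots w_n)=2\pi^{n+1}/(n!\,w_0\cdots w_n)$, as claimed. The only delicate points are bookkeeping the powers of $2\pi$ and $i$, and verifying the divided-difference vanishing that makes the $u\to0$ limit finite; everything else is substitution from Lemma \ref{lem crit mu sphere}.
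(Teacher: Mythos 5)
Your proof is correct, and its skeleton coincides with the paper's: choose an auxiliary $S^1$-action with all $\beta_j \neq 0$ (so there are no $G$-fixed points and Theorem \ref{thm-localization} applies) and with $\lambda_j = \beta_j/w_j$ pairwise distinct, then insert the data of Lemma \ref{lem crit mu sphere}. Where you genuinely diverge is in which class you localize and how the resulting sum is evaluated. The paper localizes the honest polynomial class $\eta = (d_G\alpha)^n$: since $\iota_j^*d\alpha = 0$, each summand is $(-\lambda_j u)^n \int_{C_j} \iota_j^*\alpha / e_j$, so the $u^n$ in the numerator cancels the $u^n$ in $e_j$ identically and no singularity at $u=0$ ever appears; the leftover constant sum $\sum_j \beta_j^n / \prod_{k\neq j}(w_k^{-1}\beta_k w_j - \beta_j)$ is then evaluated without any interpolation identity, by observing that the answer is independent of the choice of $\beta$ and letting $\beta_0 \to \infty$, where the $j=0$ term tends to $(-1)^n$ and the others vanish. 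You instead localize $e^{id_G\alpha}$, which lives only in the completion of $H_G(M, \CF)$ (legitimate---the paper sanctions exactly this use in Section 4), and you must then argue that the apparent order-$n$ pole at $u=0$ is removable and extract the constant term via the Lagrange/divided-difference identities for all $m \le n$. Both routes are sound. The paper's is leaner: polynomial class, no completion, no interpolation identity. Yours buys something too: your $m=n$ identity $\sum_j \lambda_j^n / \prod_{k\neq j}(\lambda_k - \lambda_j) = (-1)^n$ evaluates the paper's final sum directly, subsuming its $\beta_0 \to \infty$ trick, and the exponential-class computation you carry out is precisely the shape of input that feeds into the residue formula of Theorem \ref{thm-residue}.
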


\begin{proof}
Recall that $d_G\alpha=d\alpha -\mu u$. We insert the results of Lemma \ref{lem crit mu sphere} into our localization formula. 
Choose any weights $\beta_j$ such that $\lambda_j \neq \lambda_j$ for $j \neq l$ so that $\Crit \mu = \cup_{j=0}^n C_j$. 
Note that $C_j$ is 1-dimensional, so only the polynomial part of $d_G\alpha$ enters on the right hand side; we need a top degree form on the left hand side when integrating over $M$, so only $d\alpha$ enters.
\allowdisplaybreaks
\begin{align*}
\int_M \alpha \wedge (d\alpha)^n &=\int_M \alpha \wedge (d_G\alpha)^n = \sum_j(-\mu(C_j)u)^n \int_{C_j} \frac{\iota_j^*\alpha}{e_j}\\
&= (2\pi)^{n+1}(-1)^n\sum_j \left(\frac{\beta_j}{w_j}\right)^n \frac{1}{w_j\prod_{k\neq j} (\beta_k -w_k\beta_j  / w_j)}\\
&= \frac{(2\pi)^{n+1}(-1)^n}{w_0 \cdots w_n}\sum_j \frac{\beta_j^n }{\prod_{k\neq j} (w_k^{-1}\beta_k w_j -\beta_j)}.
\end{align*} 
The right hand side has to be independent of the $\beta_j$, so we can take the limit $\beta_0 \rightarrow \infty$. Then the $(j=0)$-summand tends to $(-1)^n$, the others vanish (cf.~\cite{GNTlocalization}). 
\end{proof}

Now, let us consider the special case of the odd sphere $M=S^{3} \subset \C^2$ with Sasakian structure determined by the weight $(w, 1)$ with $w > 0$ irrational. Let $G = S^1$ act on $M$ with weights $\beta=(-1,1)$. By Lemma \ref{lemma eq bas coh sphere}, we have $H_G(M, \CF) \cong \frac{\R[u, s]}{\ev{(ws-u)(s+u)}}$. 
We obtain from Lemma \ref{lem crit mu sphere} for this special case that the critical set is given by $\Crit\mu = C_0 \ \dot \cup \ C_1$, where $C_0 = S^1 \times \{0\}$ and $C_1 = \{0\} \times S^1$. 
The equivariant basic cohomology of the connected components is $H_G(C_j, \CF) \cong \R[u]$. Furthermore, $\mu(C_0) =- 1/w$, $\mu(C_1) = 1$, the Euler classes $e_j$ of the normal bundles to $C_j$ in $M$ are $e_0 = \frac{u}{2\pi} \left(1+\frac{1}{w}\right) $ and $e_1=  -\frac{u}{2\pi} \left(1+w\right) $ and the restrictions $\iota_j^*: H_G(M, \CF)\to H_G(C_j, \CF)$ are given by $\iota_0^*: s \mapsto u/w$ and $\iota_1^*: s \mapsto -u$.
Recall that we identified $\mathfrak{s}^1$ with $\R$. 
If $S^1$ is parametrized via the angle $\varphi$, then this identification corresponds to $\lambda \partial_\varphi \mapsto \lambda$. 
We determine a metric $g$ on $S^1$ by $g(\partial_\varphi, \partial_\varphi)=1$ so that the volume form is given by $\vol_{S^1} = d\varphi$, $\vol(S^1)=2\pi$. 
The induced inner product on $\R \simeq \mathfrak{s}^1$ is then multiplication so that the induced measures to consider on $\fg^*$ and $\fg$ are the standard measures $du$ and $d\phi$, respectively.

Let us consider the Mayer-Vietoris sequence (cf.~\cite[Proposition~6]{Casselmann2016}) of the pair $(M\setminus C_1, M\setminus C_0)$. 
Note that $M\setminus C_1$ equivariantlly retracts onto $C_0$, $M\setminus C_0$ equivariantly retracts onto $C_1$, and  $(M\setminus C_1) \cap (M\setminus C_0)$ equivariantly retracts onto $\mu^{-1}(0)$. Basic Kirwan surjectivity yields that the long exact Mayer-Vietoris sequence turns into short exact sequences
\[0\to H_G^*(M, \CF)\stackrel{\iota_0^* \oplus \iota_1^*}{\to} H_G^*(C_0,\CF)\oplus H_G^*(C_1,\CF) \to H_G^*(\mu^{-1}(0),\CF)\to 0.\]

Hence, we can write $\eta \in H_G(M, \CF)$ as $\eta^0 \oplus \eta^1$, with $\eta^j \in H_G(C_j, \CF) \cong \R[u]$. Considering the restriction maps, it becomes evident that $\eta^0 \oplus \eta^1$ lies in the image of $\iota_0^* \oplus \iota_1^*$ if and only if $\eta^0$ and $\eta^1$ have the same constant term, as polynomials in $u$.

We compute the argument of $\jkres$ in the residue formula to be
\begin{align*}
	&(2\pi)^2\left(\frac{e^{i\phi/w}\eta^0(\phi)}{\phi(1+w)} - \frac{e^{-i\phi} \eta^1(\phi)}{\phi(1+w)}\right)d\phi.
\end{align*}
 Note that for a rational function $g$ and $\lambda \in \R\setminus \{0\} $, the residue is given as (cf.~\cite[Proposition~3.4]{JeffreyKirwan96})
 \[\jkres^{\{t \in \R \mid t > 0 \}} \left( g(\phi) e^{i\lambda \phi} d\phi \right) = \begin{cases} 0& \lambda <0\\  \sum_{b \in \C}\mathrm{Res}_{z=b}\left( g(z) e^{i\lambda z} \right)& \mathrm{ else }\end{cases}.\]

Thus, we obtain
\begin{align*}
	\int_{M_0} \!\alpha_0  \wedge  \eta_0  \wedge e^{id\alpha_0} = \frac{1}{\vol G} \jkres\left( (2\pi)^2\left(\frac{e^{i\phi/w}\eta^0(\phi)}{\phi(1+w)}\right)d\phi\right)
	=\frac{1}{2\pi} \frac{(2\pi)^2 \eta^0(0)}{w+1} = \frac{2\pi \eta^0(0)}{w+1}.
\end{align*}

In particular, 
\begin{align}
	\int_{M_0} \alpha_0 \wedge  e^{id\alpha_0} =\int_{M_0} \alpha_0= \frac{2\pi}{1+w}.\label{eq ex res formula eta 1}
\end{align}

We will now compute the left hand side of Equation \eqref{eq ex res formula eta 1} to see that our formula holds.
Note that $\mu^{-1}(0) = S^{1}\left(\tfrac{1}{\sqrt{2}}\right) \times S^{1}\left(\tfrac{1}{\sqrt{2}}\right)$.
$\mu^{-1}(0)/G$ is $\{\phi_t\}$-equivariantly diffeomorphic to $S^{1}$ via $[z] \mapsto 2 z_1 z_0$, where $\phi_t$ acts on $S^{1}$ by $\phi_t(z)=e^{it(w+1)}z$. Under this identification, the projection $p:\mu^{-1}(0)\to M_0$ is given by $(z_0,z_1)\mapsto 2 z_1 z_0$. Denote the inclusion by $\iota:\mu^{-1}(0) \hookrightarrow M$. We then compute $\iota^* \alpha = \tfrac{2i}{w+1}(z_0d\bar z_0 + z_1 d\bar z_1)$. Since $p^* (\tfrac{i}{w+1} z d\bar z) $ $= \iota^* \alpha$, we obtain $\alpha_0 =\tfrac{i}{w+1} z d\bar z $.

Up to a zero set, $M_0 \simeq S^1$ is parametrized by $\Psi:(0, 2\pi) \to S^1$, $\psi \mapsto e^{i\psi}$. In this coordinate, $\alpha_0 = \tfrac{1}{w+1} d\psi$. Then $\int_{S^1} \alpha_0 = \int_0^{2\pi} \tfrac{1}{w+1} d\psi = \tfrac{2\pi}{w+1}$, which is exactly the right hand side of Equation \ref{eq ex res formula eta 1}.

\allowdisplaybreaks

%\address{Fachbereich Mathematik, Universit\"at Hamburg, Germany\\
%Bundesstra{\ss}e 55, 20146 Hamburg, Germany\\
%\email{lana.casselmann@uni-hamburg.de}\\
%\email{jonathan.m.fisher@gmail.com}\\
%\received{April 25, 2017}\\
%\accepted{}}
\end{document}